\documentclass[12pt,english]{article}
\usepackage[margin=1.5in]{geometry}

\usepackage{mdwlist}
\usepackage{enumerate}
\usepackage{amssymb,amsbsy,latexsym}
\usepackage{amsmath}
\usepackage{graphics, subfigure, float}
\usepackage{fp, calc}
\usepackage{hyperref}
\usepackage{url}

\usepackage[T1]{fontenc} 
\usepackage{fourier}
\usepackage{bm}

\usepackage{amscd,amsthm}

\usepackage{pst-all}
\usepackage{pstricks-add}
\usepackage{pst-func}
\newpsobject{showgrid}{psgrid}{subgriddiv=1,griddots=10,gridlabels=6pt}
\usepackage{verbatim, comment}
\usepackage{datetime}

\newtheoremstyle{theorem}{1em}{1em}{\slshape}{0pt}{\bfseries}{.}{ }{}
\theoremstyle{theorem}
\newtheorem{theorem}{Theorem}

\newtheorem*{theorem*}{Theorem}
\newtheorem{corollary}[theorem]{Corollary}
\newtheorem*{corollary*}{Corollary}
\newtheorem{proposition}[theorem]{Proposition}
\newtheorem*{proposition*}{Proposition}
\newtheorem{lemma}[theorem]{Lemma}
\newtheorem{definition}[theorem]{Definition}

\newtheorem*{claim*}{Claim}
\theoremstyle{remark}

\newtheorem*{remark*}{Remark}

\providecommand{\setN}{\mathbb{N}}

\providecommand{\setR}{\mathbb{R}}

\newcommand{\E}{\mathop{\mathbb{E}}}

\newcommand{\Vol}{\mathrm{Vol}}

\renewcommand{\span}{\textrm{span}}

\newcommand{\eps}{\varepsilon}
\newcommand{\detplus}{\textstyle{\det_+}}
\renewcommand{\Tr}{\mathrm{Tr}}
\renewcommand{\Pr}{\mathop{\mathbb{P}}}

\usepackage{calrsfs} 
\DeclareMathAlphabet{\pazocal}{OMS}{zplm}{m}{n}

\makeatletter
\providecommand\@dotsep{5}
\def\listtodoname{List of Todos}
\def\listoftodos{\@starttoc{tdo}\listtodoname}
\makeatother

\title{Linear-size $\ell_1$ sparsifiers}
\author{Victor Reis\thanks{Microsoft Research, Redmond. Email: {\tt victorol@microsoft.com}.} \;\; and \; Thomas Rothvoss\thanks{University of Washington, Seattle. Email: {\tt rothvoss@uw.edu}. Supported by NSF grant 2318620 \emph{AF: SMALL: The Geometry of Integer Programming and Lattices}.}}
\date{}

\begin{document}

\maketitle

\begin{abstract} 
   We prove that for any matrix $A \in \setR^{m \times n}$ and any $\eps > 0$ there is a diagonal matrix $D \in \setR_{\geq 0}^{m \times m}$ with at most $O(\frac{n}{\varepsilon^2})$
  nonzero entries so that
  \[
  (1-\varepsilon) \|Ax\|_1 \leq \|DAx\|_1 \leq (1+\varepsilon)\|Ax\|_1 \quad \forall x \in \setR^n.
  \]
  In particular, for any zonotope $Z \subseteq \setR^{n}$ there exists a zonotope $Z' \subseteq \setR^{n}$ generated by at most $O(\frac{n}{\varepsilon^2})$ segments so that $(1-\varepsilon) Z \subseteq Z' \subseteq (1+\varepsilon) Z$. Previously, the best known bound was $O(\frac{n}{\varepsilon^2} \log n)$ due to Talagrand (1990). 
\end{abstract}


\section{Introduction}

A classical problem in the intersection of convex geometry and the design of fast algorithms is the
following: given a matrix $A \in \setR^{m \times n}$ and parameters $p \geq 1$ and $\varepsilon > 0$, can one
replace $A$ by a matrix $\widetilde{A}$ with few rows so that
\begin{equation} \label{eq:LpApproximation}
 (1-\varepsilon) \|Ax\|_p \leq  \|\widetilde{A}x\|_p \leq (1+\varepsilon) \|Ax\|_p \quad \forall x \in \setR^n.
\end{equation}
All existing methods 
 produce the matrix $\widetilde{A}$ by selecting and rescaling existing rows of $A$, i.e. $\widetilde{A} = DA$ where $D \in \setR_{\geq 0}^{m \times m}$ is a diagonal matrix with few nonzero entries. This may be preferred in the underlying applications, and we also restrict to this choice.

The setting of $p=1$ --- which is the focus of this manuscript --- has a natural geometric interpretation. For a compact convex set $K \subseteq \setR^n$, its \emph{support function} is the function $h_K : \setR^n \to \setR$ with
\[
  h_K(x) = \max_{y \in K} \left<y,x\right>.
\]
A matrix $A \in \setR^{m \times n}$ generates a \emph{zonotope} $Z = \{ y^\top A : y \in [-1,1]^m \}$ which is a bounded centrally symmetric polyhedron. In other words, $Z$ is the Minkowski sum of the $m$ segments $[-a_i,a_i]$
where $a_1,\ldots,a_m$ are the rows of $A$.
The support function of $Z$ is simply
\[
 h_Z(x) = \max_{y \in [-1,1]^m} \sum_{i=1}^m \left<y_ia_i,x\right> = \|Ax\|_1.
\]
Then our sparsification question \eqref{eq:LpApproximation} for $p = 1$ is equivalent to asking whether for any zonotope $Z \subseteq \setR^n$, there exists a zonotope $Z'$ with few segments so that $(1-\varepsilon) Z \subseteq Z' \subseteq (1+\varepsilon)Z$. Schechtman~\cite{Schechtman1987} proved that $O(\frac{n^2}{\varepsilon^2} \log(\frac{1}{\varepsilon}))$ segments suffice. This was later improved by Bourgain, Lindenstrauss and Milman \cite{BourgainLindenstraussMilman1989} to $O(\frac{n}{\varepsilon^2} \log(\frac{n}{\varepsilon}) \cdot (\log n)^2)$
and then by Talagrand~\cite{Talagrand1990} to $O(\frac{n}{\varepsilon^2} \log n)$ which remained the best known bound prior to this work. For the special case where $Z = B^n_2$ (which is a \textit{zonoid}), Figiel, Lindenstrauss and Milman~\cite{FigielLindenstraussMilman1977} proved that $O(\frac{n}{\varepsilon^2} \log(\tfrac{1}{\eps}))$ segments suffice, and Gordon~\cite{Gordon1985} improved this bound to $O(\frac{n}{\varepsilon^2})$ segments. More work has been done in the regime where $n$ is fixed and $\varepsilon \to 0$, see also the extensive discussion in \cite{BourgainLindenstraussMilman1989}.

A related question that has been studied extensively in the theoretical computer science community is how to replace an undirected graph $G = (V,E)$ on $n$ vertices by a weighted graph $G' = (V,E',w)$ with few edges that is a $(1+\eps)$-\textit{cut sparsifier}, i.e. for every set $S \subseteq V$, the weight of the cut is approximately preserved: $(1-\varepsilon) |\delta(S)| \leq w(\delta_{E'}(S)) \leq (1+\varepsilon) |\delta(S)|$. Motivated by designing faster algorithms for minimum $s$-$t$ cuts, Bencz\'ur and Karger~\cite{BenczurKargerSTOC96} proved that $O(\frac{n}{\varepsilon^2} \log n)$ edges suffice. The later works of Spielman and Teng~\cite{SpielmanTengSICOMP2011} and Spielman and Srivastava~\cite{SpielmanSrivastavaSICOMP2011} strengthened the notion of sparsification by showing that even the graph Laplacian can be approximately preserved using $O(\frac{n}{\varepsilon^2} (\log n)^7)$ and $O(\frac{n}{\varepsilon^2} \log n)$ edges, respectively. 
The breakthrough of Batson, Spielman and Srivastava \cite{BatsonSpielmanSrivastavaSTOC2009,BatsonSpielmanSrivastava2012} finally provided a bound that is  linear in $n$. In fact, \cite{BatsonSpielmanSrivastavaSTOC2009,BatsonSpielmanSrivastava2012} prove the more general statement that for any $A \in \setR^{m \times n}$ there is a diagonal matrix $D \in \setR_{\geq 0}^{m \times m}$  with support size $O(\frac{n}{\varepsilon^2})$ so that
\[
 (1-\varepsilon) \|Ax\|_2 \leq  \|DAx\|_2 \leq (1+\varepsilon) \|Ax\|_2 \quad \forall x \in \setR^n.
\]
This has a wide range of applications, such as an approximate John decomposition with only linearly many contact points, see Naor~\cite{SparseQuadraticFormsNaor2012}.
Most of the above results randomly sample rows (or edges) while fixing those that are deemed too important to be left to the randomness. Here the importance is determined for example using \emph{Lewis weights} (as in \cite{Talagrand1990,CohenPengSTOC2015}) or using \emph{effective resistances} (as in \cite{SpielmanSrivastavaSICOMP2011}). 
Crucially, the work of Batson, Spielman and Srivastava \cite{BatsonSpielmanSrivastavaSTOC2009,BatsonSpielmanSrivastava2012} provides a deterministic polynomial-time procedure using a potential function instead.
Now, let $G = (V,E)$ be a graph with $n = |V|$ vertices and $|E| = m$ edges. Create a matrix  $A \in \{ -1,0,1\}^{m \times n}$ which for any edge $e = \{ i,j\} \in E$ has a row $e_i - e_j$ (with an arbitrary orientation of the sign); this is a signed node-edge incidence matrix of the graph.
Then for any weight vector $w \in \setR_{\geq 0}^m$ and $S \subseteq [n]$, one has
\[
 \|\textrm{diag}(w) A \bm{1}_S\|_1 = w(\delta(S)).
\]
In other words, any $\ell_1$-sparsifier of the signed node-edge incidence matrix also gives a cut sparsifier. Andoni, Krauthgamer and Woodruff~\cite{AndoniKWGraphCuts2014} prove that for any $n$ and $\varepsilon > \frac{1}{\sqrt{n}}$ there is an $n$-vertex graph $G$ so that any $(1+\varepsilon)$-approximate cut sparsifier has at least $\Omega(\frac{n}{\varepsilon^2})$ edges. This also gives the same lower bound for the size of $\ell_1$-sparsifiers, at least when restricted to rescaling of existing rows.

We also mention a different type of sparsification that preserves the norm for pairs of points in a given discrete point set instead of all points in a subspace.
The by now classical result by Johnson and Lindenstrauss~\cite{Johnson1984ExtensionsOL} shows that
for any finite set $X \subseteq \setR^d$ there is a matrix $B$ with $O(\frac{1}{\varepsilon^2}\log(|X|))$ rows so that
\[
 (1-\varepsilon) \|x-y\|_2 \leq \|Bx - By\|_2 \leq (1+\varepsilon) \|x-y\|_2 \quad \forall x,y \in X.
\]
Much of the more recent work on this topic has focused on producing a matrix $B$ that is particularly sparse~\cite{DasguptaKumarSarlosSTOC2010,KaneNelsonJACM14}.  

More recently, there has been renewed interest in sparsification for functions beyond $\ell_p$ norms~\cite{JLLS2023,JLLS2024}.

\subsection{Our contribution}

Our main result is as follows:
\begin{theorem} \label{thm:L1Sparsifier}
For any matrix $A \in \setR^{m \times n}$ and any $0<\varepsilon \leq 1$, there is a diagonal matrix $D \in \setR_{\geq 0}^{m \times m}$ with at most $O(\frac{n}{\varepsilon^2})$
 nonzero entries so that
  \[
  (1-\varepsilon) \|Ax\|_1 \leq \|DAx\|_1 \leq (1+\varepsilon)\|Ax\|_1 \quad \forall x \in \setR^n.
  \]
\end{theorem}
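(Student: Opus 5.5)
Our plan is to remove the $\log n$ factor in Talagrand's bound by combining Lewis-weight normalization with an iterative halving scheme driven by discrepancy, rather than by one-shot random sampling.

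\emph{Step 1: reduction to the isotropic setting.} By Lewis' change of density we may assume, after reweighting and splitting rows, that $A$ is in \emph{$\ell_1$-Lewis position}: $m = O(n/\eps^2)\cdot \mathrm{poly}$ is finite, all rows have equal Lewis weight, and $\|a_i\|_2 \asymp \sqrt{n/m}$ after a linear change of variables. In this position every row is unimportant, in the sense that $|\langle a_i,x\rangle| \lesssim \sqrt{n/m}\,\|Ax\|_1$. The reweighting and splitting preserve $\|Ax\|_1$ exactly. Lewis position is stable under the operations below up to constant factors, so it can be re-established in each round.

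\emph{Step 2: halving with geometrically decreasing error.} We repeatedly replace the current $m$ rows by about $m/2$ rows with doubled weights. The weights are given by $w = \mathbf 1 + s$, where $s \in \{-1,1\}^m$ is a balanced coloring, so that
\[
\sup_{\|Ax\|_1\le 1} \Big|\sum_i s_i |\langle a_i,x\rangle|\Big| \le \delta(m) \approx C\sqrt{n/m}.
\]
Doing this while $m \ge n/\eps^2$ gives total error $\sum_k C\sqrt{n/m_k}$. This is a geometric series dominated by its last term, so the total error is $O(\eps)$ and we finish with $O(n/\eps^2)$ rows. Random signs yield only $\delta(m) \approx \sqrt{(n/m)\log n}$, which is Talagrand's bound; the $\log n$ must disappear in this halving step.

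\emph{Step 3, the main obstacle: the discrepancy bound without the log.} We must find signs $s$ with $\sup_{x\in K}\big|\sum_i s_i f_i(x)\big| \lesssim \sqrt{n/m}$, where $f_i(x) = |\langle a_i,x\rangle|$ and $K$ is the unit ball of $\|A\cdot\|_1$. We would use a partial-coloring or Gaussian-measure argument in the style of Spencer, Gluskin and Giannopoulos: it suffices to show that the symmetric convex body
\[
\Big\{ s \in \setR^m : \Big|\sum_i s_i f_i(x)\Big| \le c\sqrt{n/m}\ \ \forall x\in K \Big\}
\]
has Gaussian measure at least $e^{-cm}$. A partial coloring then follows, and iterating over geometrically shrinking sets of uncolored rows gives a full coloring with constant-factor loss. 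For the measure bound we would apply Talagrand's majorizing measures / generic chaining to the process $s\mapsto \sum_i s_i f_i(x)$. In Lewis position its increments are controlled by the metric $d(x,y) = \big(\sum_i (\langle a_i,x-y\rangle)^2\big)^{1/2}$, which we compare to $\|\cdot\|_1$ via $\|Ax\|_2 \lesssim \sqrt{n/m}\,\|Ax\|_1$-type estimates. Dudley's bound loses exactly the $\log n$ through the covering numbers $\log N(K,\|A\cdot\|_\infty,t)\lesssim (n/t^2)\log$ factors. The point is that we only need a tail of probability $e^{-cm}$ rather than an expectation, and $m \gg n$. So in the chaining we truncate at scale where $\log N \approx m$: coarse scales are handled by union bounds, which are affordable since $e^{-cm}$ absorbs $e^{O(n\log(1/t))}$, and fine scales are handled by the Lipschitz bound $|f_i(x)-f_i(y)| \le |\langle a_i,x-y\rangle|$ together with the $\ell_\infty$-smallness of the rows. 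Balancing these two regimes should give $\sqrt{n/m}$ with no logarithm. Nailing down this chaining estimate is where we expect the real difficulty, and the first thing we would try is bounding the number of scales contributing non-geometrically by $O(1)$ using $m\ge n$.
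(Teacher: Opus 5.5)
\textbf{There is a genuine gap in Step 3.} Your Steps 1--2 are essentially the paper's reduction. The body in Step 3 is, up to Gaussian-versus-cube normalization, exactly $Q=\{x\in[-1,1]^m:(1-\eps)Z\subseteq Z_{\bm 1+x}\subseteq(1+\eps)Z\}$ with $\eps\asymp\sqrt{n/m}$. Once $\Vol_m(Q)\ge 2^{-O(m)}$ is known, the paper applies a partial coloring to the \emph{current} reweighted zonotope, deleting rows colored $-1$ and doubling rows colored $+1$, and iterates with geometrically growing $\eps_t$; Lewis position is not even needed.

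Step 3, however, is the entire content of the theorem, and your sketch does not establish it. The heuristic ``we only need a tail of probability $e^{-cm}$, so union bounds over $e^{O(n\log(1/t))}$ net points are affordable'' has the direction reversed. You need a \emph{lower} bound on a small-ball event, $\Pr_s\big[\sup_{x\in K}|\sum_i s_if_i(x)|\le c\sqrt{n/m}\big]\ge e^{-cm}$. For a single $x$, the variable $\sum_i s_if_i(x)$ has standard deviation $\|Ax\|_2$, which can be of order $\sqrt{n/m}\,\|Ax\|_1$. So each individual constraint already fails with constant probability, and there is no exponentially small failure probability to absorb a net. The standard tool for such lower bounds is a \v{S}id\'ak/Gaussian-correlation product over the strips of a chaining decomposition. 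With the entropy bounds available for the unit ball of $\|A\cdot\|_1$ (e.g.\ $\log N\lesssim t^{-2}\log n$ from Maurey's lemma at intermediate scales), this reproduces exactly Talagrand's $\sqrt{\log n}$ loss. Simply shrinking a random sign vector by $1/\sqrt{\log n}$ only gives measure $e^{-O(m\log\log n)}$, which is too small for partial coloring with a constant $s$. Your hope that only $O(1)$ scales contribute non-geometrically is precisely the open problem, and no mechanism is offered for it.

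The paper avoids nets entirely, using two ideas absent from your proposal.

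\emph{First, a one-sided estimate with no union bound.} The function $f(x)=\Vol_n(Z\ominus Z_{\bm 1+x})$ is separately convex, because $\Vol_n(K\ominus s[-v,v])$ is convex in $s$ and Minkowski subtraction is associative. It is also homogeneous: $f(\rho x)=\rho^nf(x)$. Jensen then gives $T_\rho f\ge\rho^n f$ on $\{-1,1\}^m$. Combined with hypercontractivity $\|T_\rho f\|_2\le\|f\|_{1+\rho^2}$ and H\"older, this forces $\Pr[f(x)>0]\ge e^{-2n}$. In other words, $\Pr_{x\sim[-1,1]^m}[Z_{\bm 1+x}\subseteq Z]\ge e^{-2n}$, and the same holds on every coordinate section.

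\emph{Second, a symmetrization theorem.} Suppose a convex $K\subseteq[-1,1]^m$ contains $[-\eps,\eps]^m$ and all its coordinate sections have relative volume at least $p$. Then $\Vol_m(K\cap(-K))\ge 2^{-5m}$ as soon as $m\ge\log_2(1/p)/\eps^2$. The proof uses concavity of $z\mapsto\Vol_m(K\cap(2z-K))^{1/m}$, a separating hyperplane against $rB_\infty^m$, and a lower bound on the volume of the caps $K\cap\{\langle a,x\rangle\le\lambda\|a\|_1/\sqrt m\}$. The paper applies this to $K=\{x:Z_{\bm 1+x}\subseteq(1+\eps)Z\}$ with $p=e^{-2n}$. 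This is where $m\asymp n/\eps^2$ arises without a logarithm: the lower inclusion $(1-\eps)Z\subseteq Z_{\bm 1+x}$ is obtained from the upper one by symmetrization, rather than by controlling a two-sided supremum.
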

As discussed above, this theorem has a geometric interpretation:
\begin{theorem} \label{thm:L1SparsifierV2}
For any zonotope $Z \subseteq \setR^{n}$ and any $0<\varepsilon \leq 1$, there exists a zonotope $Z'$ generated by at most $O(\frac{n}{\varepsilon^2})$ segments so that
  ${(1-\varepsilon) Z \subseteq Z' \subseteq (1+\varepsilon) Z}$. 
\end{theorem}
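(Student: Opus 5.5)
This follows from Theorem~\ref{thm:L1Sparsifier} by translating between zonotopes and their support functions, exactly as in the discussion in the introduction. The plan is to write $Z$ via a generating matrix, apply Theorem~\ref{thm:L1Sparsifier} to that matrix, read off $Z'$ from the resulting rescaled rows, and convert the inequality of support functions into the containments.

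First, write the given zonotope as $Z = \{ y^\top A : y \in [-1,1]^m\}$ for some $A \in \setR^{m\times n}$ with rows $a_1,\ldots,a_m$. This is the paper's convention, under which zonotopes are centrally symmetric about the origin. For a zonotope with center $c \neq 0$ one would interpret the statement after translating by $c$, i.e.\ compare $Z - c$ and $Z'-c$. By the computation in the introduction, $h_Z(x) = \|Ax\|_1$.

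Next, apply Theorem~\ref{thm:L1Sparsifier} to $A$ and $\eps$ to obtain $D = \mathrm{diag}(d_1,\ldots,d_m)\ge 0$ with at most $O(\frac{n}{\eps^2})$ nonzero entries. Define
\[
Z' = \sum_{i : d_i > 0} [-d_i a_i, d_i a_i] = \{ y^\top DA : y \in [-1,1]^m \}.
\]
This is a zonotope generated by at most $O(\frac{n}{\eps^2})$ segments. The same computation gives $h_{Z'}(x) = \sum_i |d_i\langle a_i,x\rangle| = \|DAx\|_1$, where we use $d_i \ge 0$. The conclusion of Theorem~\ref{thm:L1Sparsifier} therefore reads
\[
h_{(1-\eps)Z}(x)=(1-\eps)h_Z(x) \le h_{Z'}(x) \le (1+\eps) h_Z(x)=h_{(1+\eps)Z}(x) \qquad \forall x\in\setR^n ,
\]
where we use positive homogeneity $h_{\lambda K} = \lambda h_K$ for $\lambda \ge 0$.

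Finally, we use the standard fact that for compact convex sets $K,L\subseteq \setR^n$ one has $K \subseteq L$ if and only if $h_K \le h_L$ pointwise. The direction "$\Rightarrow$" is immediate from the definition of $h$. For "$\Leftarrow$", suppose $y \in K\setminus L$. The separating hyperplane theorem gives $x$ with $\langle y,x\rangle > h_L(x)$, and hence $h_K(x) > h_L(x)$, a contradiction. Applying this fact to both inequalities yields $(1-\eps)Z \subseteq Z' \subseteq (1+\eps)Z$. No step is a real obstacle: all the difficulty lies in Theorem~\ref{thm:L1Sparsifier}, and the only care needed is the sign convention ($D\ge 0$) and the origin-centering of zonotopes.
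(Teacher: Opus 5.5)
Your support-function translation is correct. $h_{Z'}(x)=\|DAx\|_1$ uses $D\ge 0$, homogeneity gives $h_{(1\pm\eps)Z}=(1\pm\eps)h_Z$, and your separation argument correctly shows that $h_K\le h_L$ implies $K\subseteq L$ for compact convex sets. The problem is what you feed into it: the argument is circular relative to the paper. Theorem~\ref{thm:L1Sparsifier} has no proof of its own. Section~\ref{sec:ProofOfLinSizeSparsifier} opens with ``We finally prove Theorem~\ref{thm:L1SparsifierV2} which also implies the statements of Theorem~\ref{thm:L1Sparsifier} and Theorem~\ref{thm:L1SparsifierV3}''. In other words, the $\ell_1$ statement is obtained from the zonotope statement by running your computation in the opposite direction. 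What you have shown is only the equivalence of the two formulations, which the introduction already records. None of the substance of the result, the improvement from Talagrand's $O(\frac{n}{\eps^2}\log n)$ to $O(\frac{n}{\eps^2})$, has been addressed.

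What is missing is the actual construction, which the paper carries out in the zonotope language. \emph{(i) Random inclusion bound.} The function $f(x)=\Vol(Z\ominus Z_{\bm 1+x})$ is separately convex, because the volume of a Minkowski difference is convex in each segment length. It also satisfies $f(\rho x)=\rho^d f(x)$. Combining Jensen's inequality for separately convex functions with Bonami's hypercontractivity gives $\Pr_{x\sim[-1,1]^m}[Z_{\bm 1+x}\subseteq Z]\ge e^{-2n}$ (Lemma~\ref{lem:hyperhomog}, Theorem~\ref{thm:UniformRandomZonotopeInclusion}). Hence every coordinate section of $K=\{x\in[-1,1]^m : Z_{\bm 1+x}\subseteq(1+\eps)Z\}$ has relative volume at least $e^{-2n}$, while $[-\eps,\eps]^m\subseteq K$. \emph{(ii) Size of the symmetrizer.} A convex-geometric argument upgrades this to $\Vol_m(K\cap -K)\ge 2^{-5m}$ once $m\ge c_0 n/\eps^2$ (Theorem~\ref{thm:LowerboundOnVolumeForKSymmetrizer}). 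It uses reflection volumes $f_K$, Brunn's concavity principle, a separating hyperplane, and the cap estimate of Lemma~\ref{lem:VolumeEllOneCap}. Note that $K\cap -K$ is exactly the set of reweightings satisfying both containments. \emph{(iii) Partial coloring.} Theorem~\ref{thm:FractionalColoring} then yields $x\in s(K\cap -K)$ with at least $m/4$ coordinates equal to $-1$, deleting a quarter of the segments at a cost of a factor $1\pm s\eps$. \emph{(iv) Iteration.} Iterate with $\eps_t$ defined by $m_t=c_0 n/\eps_t^2$, stopping at $Cn/\eps^2$ segments. The $\eps_t$ grow geometrically, so the product of the error factors stays within $1\pm\eps$. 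A proof of the statement needs these ingredients, or some independent proof of Theorem~\ref{thm:L1Sparsifier}. Your translation only converts the result between the two equivalent formulations.
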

In fact, there is a third interpretation. Let $\ell_1$ be the infinite-dimensional
Banach space of all sequences $(x_k)_{k \in \setN}$
with $\|x\|_1 < \infty$, equipped with the $\| \cdot \|_1$-norm.
Moreover, let $\ell_1^N = (\setR^N,\| \cdot \|_1)$ which is a Banach space of dimension $N$.
\begin{theorem} \label{thm:L1SparsifierV3}
  Let $0<\varepsilon \leq 1$ and let $X$ be an $n$-dimensional subspace of $\ell_1$. Then there exists an $n$-dimensional
  subspace $Y$ of $\ell_1^N$ with $N \leq O(\frac{n}{\varepsilon^2})$ so that $d_{BM}(X,Y) \leq 1+\varepsilon$.
\end{theorem}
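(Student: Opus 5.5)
We deduce Theorem~\ref{thm:L1SparsifierV3} from Theorem~\ref{thm:L1Sparsifier}, which we take as given. The plan has two steps. First, reduce the infinite-dimensional subspace $X\subseteq \ell_1$ to a finite-dimensional one with only a small loss in distortion. Second, apply the matrix sparsifier to the resulting finite matrix.

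\emph{Finite-dimensional reduction.} Fix a basis $v^{(1)},\ldots,v^{(n)}$ of $X$, where each $v^{(j)}\in\ell_1$. Let $A^{(\infty)}$ be the infinite matrix with columns $v^{(j)}$, so that $\|A^{(\infty)}x\|_1=\|\sum_j x_jv^{(j)}\|_1$ is the norm on $X$ pulled back to $\setR^n$. Each $v^{(j)}$ is summable, so for any $\delta>0$ we can pick $m$ with $\sum_{k>m}|v^{(j)}_k|\le\delta$ for every $j$. Let $A\in\setR^{m\times n}$ be the first $m$ rows. Then
\[
\big|\|A^{(\infty)}x\|_1-\|Ax\|_1\big|\le \delta\|x\|_1 .
\]
Both $\|A^{(\infty)}\cdot\|_1$ and $\|x\|_1$ are norms on $\setR^n$, and the former is a norm because the $v^{(j)}$ are linearly independent. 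By compactness of the unit sphere there is a $c>0$ with $\|A^{(\infty)}x\|_1\ge c\|x\|_1$. Choosing $\delta=c\varepsilon/4$ gives
\[
(1-\varepsilon/4)\|A^{(\infty)}x\|_1\le\|Ax\|_1\le(1+\varepsilon/4)\|A^{(\infty)}x\|_1 .
\]
In particular $A$ has rank $n$.

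\emph{Sparsification.} Apply Theorem~\ref{thm:L1Sparsifier} to $A$ with parameter $\varepsilon/4$. This gives a diagonal $D$ with $N=O(n/\varepsilon^2)$ nonzero entries. Keep only the nonzero rows of $DA$ to obtain $B\in\setR^{N\times n}$ with $\|Bx\|_1\in[(1-\varepsilon/4),(1+\varepsilon/4)]\|Ax\|_1$. Set $Y=\{Bx:x\in\setR^n\}\subseteq\ell_1^N$; it is $n$-dimensional since $B$ inherits injectivity from the lower bound. Define $T:X\to Y$ by $T(\sum_jx_jv^{(j)})=Bx$. Combining the two estimates gives
\[
\|T\|\,\|T^{-1}\|\le\frac{(1+\varepsilon/4)^2}{(1-\varepsilon/4)^2}\le 1+O(\varepsilon).
\]
Rescaling $\varepsilon$ by a constant then yields $d_{BM}(X,Y)\le1+\varepsilon$, with $N$ still $O(n/\varepsilon^2)$.

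The main point of care is the reduction step: the truncation error must be relative to the norm on $X$, not absolute. This is exactly what the equivalence of norms on $\setR^n$ (the constant $c$) provides. We use only that $X$ is finite-dimensional, and $m$ is allowed to depend on $X$, so no uniformity is needed. Everything else is bookkeeping of constants.
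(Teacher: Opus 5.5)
Your proof is correct and takes essentially the paper's route. The paper only asserts that Theorem~\ref{thm:L1SparsifierV3} follows from Theorem~\ref{thm:L1Sparsifier}, and your argument supplies the reduction it leaves implicit: truncate to finitely many coordinates, control the error relative to the norm via norm equivalence on $\setR^n$, sparsify the resulting finite matrix, and compose the two distortions.
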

Here $d_{BM}(X,Y)$ denotes the \emph{Banach-Mazur distance} between $X$ and $Y$.
These results affirmatively answer a 1986 question of Schechtman~\cite[Problem 7]{Schechtman1987}; see also \cite{AIM2007FourierConvex}. We should also point out that our results do not come with a polynomial-time construction. This might be somewhat natural, as given two zonotopes $Z_1$ and $Z_2$, both specified by their generators, it is ${\bf coNP}$-complete to decide if $Z_1 \subseteq Z_2$~\cite{KulmburgAlthoff2021}. However, the matrix $D$ in Theorem~\ref{thm:L1Sparsifier} can be computed in time $2^m$ times a polynomial in the encoding length of $A$. The same holds true for the zonotope $Z'$ in Theorem~\ref{thm:L1SparsifierV2} where $A$ is the matrix whose rows generate $Z$.

Our framework also yields an alternative proof for the existence of linear-size spectral sparsifiers:
matching the bound of \cite{BatsonSpielmanSrivastavaSTOC2009,BatsonSpielmanSrivastava2012}; see also~\cite{ReisRothvoss2020} and~\cite{PaschalidisZhuang2024}.
\begin{theorem} \label{thm:L2Sparsifier}
For any matrix $A \in \setR^{m \times n}$ and $0<\varepsilon \leq 1$, there is a diagonal matrix $D \in \setR_{\geq 0}^{m \times m}$ with at most $O(\frac{n}{\varepsilon^2})$ nonzero entries so that
  \[
  (1-\varepsilon) \|Ax\|_2 \leq \|DAx\|_2 \leq (1+\varepsilon)\|Ax\|_2 \quad \forall x \in \setR^n.
  \]
  The weights can be computed in polynomial time.
\end{theorem}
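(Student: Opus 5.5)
The statement is Theorem~\ref{thm:L2Sparsifier}, the linear-size spectral sparsifier. I would prove it with a deterministic potential-function argument of Batson--Spielman--Srivastava type, since that gives polynomial time directly.

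\textbf{Reduction to isotropic position.} First reduce to the case where $A$ has full column rank and $A^\top A = I_n$. Replace $A$ by $A(A^\top A)^{-1/2}$, restricted to the column space of $A$; the statement is invariant under this change of variables. Write $v_i$ for the rows of $A$. Then $\sum_{i=1}^m v_iv_i^\top = I_n$, and the goal is to find weights $s_i \geq 0$, with at most $O(n/\varepsilon^2)$ of them nonzero, such that
\[
  (1-\varepsilon)^2 I \preceq \sum_i s_i v_iv_i^\top \preceq (1+\varepsilon)^2 I .
\]
Setting $D = \mathrm{diag}(\sqrt{s_i})$ then gives the norm statement, because $\|DAx\|_2^2 = x^\top\bigl(\sum_i s_i v_iv_i^\top\bigr)x$ and $\|Ax\|_2^2 = x^\top x$.

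\textbf{Greedy construction with barriers.} Build $M_t = \sum_i s_i v_iv_i^\top$ by adding one rank-one term per step, with $q = \lceil 4n/\varepsilon^2\rceil$ steps in total. Maintain barriers $\ell_t < \lambda_{\min}(M_t) \leq \lambda_{\max}(M_t) < u_t$ and the potentials
\[
  \Phi^u(M) = \Tr (uI - M)^{-1}, \qquad \Phi_\ell(M) = \Tr (M-\ell I)^{-1}.
\]
At each step shift the barriers to $u' = u + \delta_U$ and $\ell' = \ell + \delta_L$, and choose a vector $v_i$ and scalar $c > 0$ so that neither potential increases after adding $c\,v_iv_i^\top$.
\begin{itemize}
\item By Sherman--Morrison, the upper potential does not increase provided $1/c \geq U_A(v_i)$, where
\[
  U_A(v) = \frac{v^\top (u'I-M)^{-2}v}{\Phi^u(M) - \Phi^{u'}(M)} + v^\top (u'I-M)^{-1} v .
\]
\item Symmetrically, the lower potential does not increase provided $1/c \leq L_A(v_i)$, where $L_A$ is the analogous lower expression.
\item Averaging over $i$ using $\sum_i v_iv_i^\top = I$ gives $\sum_i U_A(v_i) \leq 1/\delta_U + \Phi^u$ and $\sum_i L_A(v_i) \geq 1/\delta_L - 1/(\Phi_\ell^{-1} - \delta_L)$. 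So if
\[
  \frac{1}{\delta_U} + \epsilon_U \;\leq\; \frac{1}{\delta_L} - \epsilon_L,
\]
where $\epsilon_U,\epsilon_L$ are the initial potential values, then some index $i$ satisfies $U_A(v_i) \leq L_A(v_i)$ and a valid $c$ exists.
\end{itemize}
Each step adds one term, so at most $q$ indices are used, and every step is an explicit eigen-computation, which gives polynomial time.

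\textbf{Parameters.} Take $\delta_L = 1$, $\delta_U = (1+\varepsilon)/(1-\varepsilon)$ (up to constants), $\ell_0 = -n/\epsilon_L$, $u_0 = n/\epsilon_U$, and $\epsilon_U, \epsilon_L \asymp \varepsilon$. After $q$ steps the ratio is
\[
  \frac{u_q}{\ell_q} = \frac{u_0 + q\delta_U}{\ell_0 + q\delta_L} \leq \frac{1+O(\varepsilon)}{1-O(\varepsilon)} .
\]
Rescaling by a common factor and replacing $\varepsilon$ by a constant multiple then gives the theorem.

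\textbf{Main obstacle.} The hard part is the inequality $\sum_i L_A(v_i) \geq 1/\delta_L - 1/(\Phi_\ell^{-1} - \delta_L)$ for the lower barrier. It needs a Cauchy--Schwarz step relating $\Tr (M-\ell' I)^{-2}$ to the change in $\Phi_\ell$, and it needs the side condition $\delta_L \leq 1/\Phi_\ell$ to be preserved at every step. I would check that condition carefully from the choice of $\ell_0$.

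The paper says its own framework gives an alternative proof. If the intended route is the same averaging device used for $\ell_1$, I would instead cast it as the $p=2$ case of that device. Without having seen that device, though, the barrier argument above is the concrete plan.
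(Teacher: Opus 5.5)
Your plan is the original Batson--Spielman--Srivastava barrier argument. Up to one slip noted below, it is a correct proof, but it takes a genuinely different route from the paper's.

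\textbf{The paper's route.} The paper never uses barrier potentials or rank-one updates. It derives Theorem~\ref{thm:L2Sparsifier} from the arbitrary-rank Theorem~\ref{thm:PSDSparsifier}, with $A_k = a_ka_k^\top$ and $D_{kk}=\sqrt{w_k}$. It proves Theorem~\ref{thm:PSDSparsifier} with exactly the $\ell_1$ machinery:
\begin{itemize}
\item $\Phi(t)=\det_+(B-\sum_i t_iA_i)$ is separately convex, by real-rootedness.
\item Jensen plus hypercontractivity (Lemma~\ref{lem:hyperhomog}) then give $\Pr_{x\sim\{-1,1\}^m}[\sum_i x_iA_i\succeq 0]\ge e^{-2n}$, and the same bound for symmetric product distributions.
\item This lower-bounds every coordinate section of $K=\{x\in[-1,1]^m:\sum_i x_iA_i\succeq-\varepsilon B\}$. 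Theorem~\ref{thm:LowerboundOnVolumeForKSymmetrizer} then gives $\Vol_m(K\cap-K)\ge 2^{-5m}$ once $m\ge c_0n/\varepsilon^2$.
\item The partial-coloring Theorem~\ref{thm:FractionalColoring} is iterated, deleting a quarter of the support per round with geometrically growing $\varepsilon_t$.
\end{itemize}

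\textbf{What each route buys.} Your route gives a deterministic, explicit algorithm with sharp constants: $\lceil dn\rceil$ terms and condition number $(\sqrt d+1)^2/(\sqrt d-1)^2$. However, its engine (Sherman--Morrison updates and resolvent-trace potentials) is intrinsically spectral and has no zonotope analogue. The paper's route buys uniformity: the only change from the $\ell_1$ proof is the separately convex witness, $\det_+$ in place of $\Vol_n(K\ominus Z_t)$. It also covers PSD $A_i$ of arbitrary rank for free. The price is unspecified constants and a randomized partial-coloring step.

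\textbf{The slip.} The averaged bound you state, $\sum_i L_A(v_i)\ge 1/\delta_L-1/(\Phi_\ell^{-1}-\delta_L)$, is strictly weaker than $1/\delta_L-\epsilon_L$, because $\Phi_\ell/(1-\delta_L\Phi_\ell)>\Phi_\ell$. So it does not combine with your condition $1/\delta_U+\epsilon_U\le 1/\delta_L-\epsilon_L$ to give $\sum_i U_A(v_i)\le\sum_i L_A(v_i)$.

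The fix is the sharper bound. Write $a_j=\lambda_j-\ell$. The Cauchy--Schwarz step actually yields $\sum_i L_A(v_i)\ge 1/\delta_L-\Phi_\ell(M)$ whenever $\delta_L\Phi_\ell(M)\le 1$. The computation reduces to
\[
\delta_L\Bigl(\sum_j a_j^{-1}(a_j-\delta_L)^{-1}\Bigr)^2\le\sum_j a_j^{-1}(a_j-\delta_L)^{-2}.
\]
The side condition you worried about is automatic. The potentials never increase, so $\Phi_\ell(M_t)\le\Phi_{\ell_0}(0)=\epsilon_L<1=1/\delta_L$ at every step. Alternatively, keep your weaker bound and shrink $\epsilon_L,\epsilon_U$ by a constant factor.

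A minor point: the isotropic reduction should live on $\ker(A)^\perp$, not the column space. For example, take a thin SVD $A=U\Sigma W^\top$ and let the $v_i$ be the rows of $U$.
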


\subsection{Overview}
In order to prove our results, we switch to the zonotope view and work towards a proof of Theorem~\ref{thm:L1SparsifierV2}; the equivalent statements of Theorem~\ref{thm:L1Sparsifier} and Theorem~\ref{thm:L1SparsifierV3} then follow. We fix a matrix $A \in \setR^{m \times n}$ with rows $a_1,\ldots,a_m$ and the corresponding zonotope $Z = \{ \sum_{i=1}^m y_ia_i : |y_i| \leq 1\; \forall i \in [m]\}$ where we assume that $Z$ is full-dimensional. For a vector $t \in \setR_{\geq 0}^m$ we also write $Z_t := \{ \sum_{i=1}^m y_ia_i : |y_i| \leq t_i \; \forall i \in [m] \}$ as the zonotope where the $i$th segment is scaled by $t_i$.

First, for compact convex sets $K,Q \subseteq \setR^n$, the \emph{Minkowski subtraction} 
can be defined as the inverse operation of the Minkowski addition, i.e. $K \ominus Q$ is the set of points
$x$ so that $x + Q \subseteq K$. Now consider the function $f : [-1,1]^m \to \setR_{\geq 0}$ defined by
\[
  f(x) := \Vol_n(Z \ominus Z_{\bm{1}+x})
\]
which gives the left-over volume when changing the segment lengths according to $x$. In particular $f(x) > 0$
implies that $Z_{\bm{1}+x} \subseteq Z$. A crucial insight is that the function $f$ is \emph{separately convex}, i.e. it is convex in each single variable. Moreover, the function inherits some natural scaling properties from the volume, in particular $f(\rho x) = \rho^n f(x)$ for all $x \in [-1,1]^m$ and $0 \leq \rho \leq 1$. Using \emph{hypercontractivity} we can prove that any function $f$ with these properties satisfies that
\begin{equation} \label{eq:Overview1}
  \Pr_{x \sim \{ -1,1\}^m}[f(x) > 0] \geq c^n
\end{equation}
where $c>0$ is a universal constant. The same bound will also hold when drawing $x \sim [-1,1]^m$.

The bound from \eqref{eq:Overview1} implies that the convex set
\[
 K := \big\{ x \in [-1,1]^m : Z_{\bm{1}+x} \subseteq (1+\varepsilon) Z \big\}
\]
of segment updates that would respect the upper inclusion is sufficiently large. In fact, for this to be true, 
the $\varepsilon$-slack would not even be needed. 
In order to also guarantee lower bounds in the form $(1-\varepsilon) Z \subseteq Z_{\bm{1}+x}$ we need to prove that the \emph{symmetrizer} $K \cap -K$ is still sufficiently large. While $K$ is highly asymmetric, all coordinate sections of $K$ still have measure at least $c^n$. With a careful convex geometric argument we prove in Section~\ref{sec:SizeOfSymmetrizer} that this property suffices to derive that $\Vol_m(K \cap -K) \geq 2^{-\Theta(m)}$ as long as $m \geq \Omega(\frac{n}{\varepsilon^2})$.

The remainder of the  argument is mostly standard. As $K \cap -K$ is large enough, we obtain that for some constant $s>0$, there is a vector $x \in s(K \cap -K)$ with $x_i = -1$ for at least a quarter of the coordinates. Then replacing $Z$ by $Z' := Z_{\bm{1}+x}$ reduces the number of segments by at least a quarter while $(1-s\varepsilon) Z \subseteq Z' \subseteq (1+s\varepsilon) Z$. We repeat the process until the target number of segments is reached; here the admissible value of $\varepsilon$ that makes the argument work increases geometrically.

\section{Preliminaries}

\paragraph{Convex geometry.} For a convex body $K \subseteq \setR^m$, the \emph{barycenter} is the point $\E_{x \sim K}[x]$.
We recall the following seminal result:

\begin{theorem}[Milman, Pajor~\cite{MilmanPajor2000}] \label{thm:MilmanPajorInequality} 
  For any convex body $K \subseteq \setR^m$ with barycenter $z$ one has $\Vol_m((K-z) \cap (-(K-z))) \geq 2^{-m}\Vol_m(K)$.
\end{theorem}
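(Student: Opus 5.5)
Translating $K$, we may assume $z=0$, so the goal is $\Vol_m(K\cap -K)\ge 2^{-m}\Vol_m(K)$. The plan is to view $\Vol_m(K\cap -K)$ as the value at the origin of the auxiliary function
\[
\psi(x):=\Vol_m\big(K\cap(x-K)\big)^{1/m},\qquad x\in\setR^m .
\]
We then show that $\psi$ is concave on its support and that the origin is the barycenter of the measure with density $\psi^m$. A Jensen argument plus a power-mean inequality then gives the bound.

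\emph{Step 1 (support and concavity).} The set $K\cap(x-K)$ is nonempty exactly when $x\in K+K=2K$, using convexity of $K$. Let $x_0,x_1\in 2K$ and $\lambda\in[0,1]$, and set $x_\lambda=(1-\lambda)x_0+\lambda x_1$. Then
\[
(1-\lambda)\big(K\cap(x_0-K)\big)+\lambda\big(K\cap(x_1-K)\big)\subseteq K\cap(x_\lambda-K).
\]
By Brunn--Minkowski, $\psi$ is therefore concave on $L:=2K$.

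\emph{Step 2 (total mass and barycenter).} By Fubini, $\int_{\setR^m}\psi(x)^m\,dx$ counts pairs $(y,x)$ with $y\in K$ and $x-y\in K$. Substituting $y'=x-y$, this equals $\Vol_m(K)^2$. The same substitution gives
\[
\int x\,\psi(x)^m\,dx=\int_K\int_K (y+y')\,dy'\,dy=2\Vol_m(K)\int_K y\,dy=0,
\]
since $0$ is the barycenter of $K$. Hence the probability measure $\mu$ with density $\psi^m/\Vol_m(K)^2$ on $L$ has barycenter $0$.

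\emph{Step 3 (Jensen plus power mean).} By concavity of $\psi$ and Jensen's inequality for $\mu$,
\[
\psi(0)=\psi\Big(\E_\mu[x]\Big)\ge \E_\mu[\psi]=\frac{\int_L\psi^{m+1}}{\int_L\psi^m}.
\]
By the power-mean (Hölder) inequality with respect to the uniform measure on $L$, we have $\frac{1}{|L|}\int_L\psi^{m+1}\ge\big(\frac{1}{|L|}\int_L\psi^m\big)^{(m+1)/m}$. Combining the two gives
\[
\psi(0)\ge\Big(\frac{\int_L\psi^m}{|L|}\Big)^{1/m}=\Big(\frac{\Vol_m(K)^2}{2^m\Vol_m(K)}\Big)^{1/m},
\]
because $|L|=2^m\Vol_m(K)$. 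Raising to the $m$-th power yields $\Vol_m(K\cap -K)=\psi(0)^m\ge 2^{-m}\Vol_m(K)$.

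The only step needing care is Step 1: checking the inclusion that feeds Brunn--Minkowski, and noting that nonemptiness on all of $2K$ makes concavity hold on the whole support, so Jensen applies. The remaining steps are Fubini and a standard inequality.
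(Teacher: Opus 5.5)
Your proof is correct. It takes a genuinely different route from the paper, which does not prove this statement but cites it. The ``direct proof'' the paper gives in Proposition~\ref{prop:reflectionVolume}(A) only shows $\E_{z\sim K}[f_K(z)]=2^{-m}\Vol_m(K)$ for $f_K(z)=\Vol_m(K\cap(2z-K))$. That means \emph{some} $z^*\in K$ is a good reflection center, not necessarily the barycenter.

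The two arguments are closely linked. Substituting $x=2z$ gives $\psi(x)^m=f_K(z)$ and $\frac{1}{\Vol_m(2K)}\int_{2K}\psi^m=\E_{z\sim K}[f_K(z)]$. So the paper's averaging is your Step~2 mass identity in different coordinates, and on its own it yields only ``maximum $\geq$ average''. Your additional ingredients upgrade this to ``value at the barycenter $\geq$ average'', which is the full Milman--Pajor statement. They are:
\begin{itemize}
\item concavity of $\psi$ on $2K$, which is the paper's Theorem~\ref{thm:IntersectionConcavity} with $L=-K$ (also behind Proposition~\ref{prop:reflectionVolume}(C));
\item the fact that $\mathbf{1}_K*\mathbf{1}_K$ has barycenter twice that of $K$;
\item Jensen, which is legitimate since $0$ is interior to $2K$, combined with the power-mean inequality.
\end{itemize}

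The paper's route is more elementary: it uses only Fubini and $\Vol_m(\frac{x+K}{2})=2^{-m}\Vol_m(K)$, with no Brunn--Minkowski. It already suffices for the paper's application, because Claim~I in the proof of Theorem~\ref{thm:LowerboundOnVolumeForKSymmetrizer} only needs some good point lying in $K_a$. Your route justifies the canonical choice of the barycenter of $K_a$, which is what Claim~I actually invokes.
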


The following is a variant of \emph{Brunn's concavity principle} which can be derived either via the Brunn-Minkowski Theorem or via Steiner symmetrization, see \cite[Section~1.2]{AsymptoticGeometricAnalysis-Book2015} or \cite[Lemma 1.34]{rothvoss-asymptotic-convex-geometry}:
\begin{theorem}[Brunn's concavity principle] \label{thm:IntersectionConcavity} 
Let $K,L \subseteq \setR^m$ be convex bodies. 
Then the function  $F(x) := \Vol_m(K \cap (x+L))^{1/m}$ is concave on its support.
\end{theorem}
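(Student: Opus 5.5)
The statement is the last one shown: Brunn's concavity principle, i.e.\ that $F(x)=\Vol_m(K\cap(x+L))^{1/m}$ is concave on its support. I plan to derive it directly from the Brunn--Minkowski inequality by establishing a single set inclusion.

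Fix $x,y$ in the support of $F$ and $\lambda\in[0,1]$, and set $z=\lambda x+(1-\lambda)y$. Write $A:=K\cap(x+L)$ and $B:=K\cap(y+L)$; both are nonempty compact convex sets.

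The key step is the inclusion
\[
\lambda A+(1-\lambda)B\subseteq K\cap(z+L).
\]
To check it, take $a\in A$ and $b\in B$. Since $K$ is convex, $\lambda a+(1-\lambda)b\in K$. Writing $a=x+\ell_1$ and $b=y+\ell_2$ with $\ell_1,\ell_2\in L$, we get
\[
\lambda a+(1-\lambda)b=z+\lambda\ell_1+(1-\lambda)\ell_2\in z+L,
\]
because $L$ is convex.

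Now apply the Brunn--Minkowski inequality $\Vol_m(\lambda A+(1-\lambda)B)^{1/m}\ge\lambda\Vol_m(A)^{1/m}+(1-\lambda)\Vol_m(B)^{1/m}$. It holds for nonempty compact sets, so degenerate $A$ or $B$ of zero volume cause no trouble. Combined with monotonicity of volume under the inclusion above, this gives
\[
F(z)\ge\lambda F(x)+(1-\lambda)F(y).
\]
In particular $F(z)>0$ whenever $F(x),F(y)>0$, so the support of $F$ is convex and $F$ is concave on it.

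The support is $\{x:\Vol_m(K\cap(x+L))>0\}$, which is the interior of $K-L$. This needs no separate argument, since the inequality already shows the support is convex. I expect no real obstacle; the only point requiring care is quoting Brunn--Minkowski in its general compact-set form. Alternatively, one could embed the family of sets into $\setR^{2m}$ and use Brunn's section theorem, but the direct route above is cleaner.
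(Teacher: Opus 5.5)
Your proof is correct. The inclusion $\lambda\,(K\cap(x+L))+(1-\lambda)\,(K\cap(y+L))\subseteq K\cap(z+L)$, combined with Brunn--Minkowski for nonempty compact sets, gives both concavity and convexity of the support. This is the Brunn--Minkowski derivation the paper refers to: the paper gives no proof of its own, cites standard references, and mentions Steiner symmetrization as an alternative route.
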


\paragraph{Discrepancy theory.}
The following is a minor modification of~\cite[Theorem 9]{ReisRothvossRSA23}:
\begin{theorem} \label{thm:FractionalColoring}
  For any $c > 0$ there is a $s := s(c) > 0$ so that the following holds:
  for any symmetric convex body $K \subseteq [-1,1]^m$ with $\Vol_m(K) \geq c^m$, there is an $x \in sK \cap [-1,1]^m$ with $|\{ j \in [m] : |x_j| = 1\}| \geq \frac{m}{2}$. Moreover, $x$ can be found in randomized polynomial time, given a separation oracle for $K$.
\end{theorem}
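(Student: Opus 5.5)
The plan is to reduce the statement to the Gaussian-measure version of the partial coloring result in \cite[Theorem 9]{ReisRothvossRSA23}. That result says, roughly, the following: for every $\alpha<1$ there is a $\delta>0$ such that any symmetric convex body $Q\subseteq\setR^m$ with Gaussian measure $\gamma_m(Q)\geq e^{-\delta m}$ contains a point $x\in Q\cap[-1,1]^m$ with at least $\alpha m$ coordinates equal to $\pm 1$. Moreover, $x$ is found in randomized polynomial time: draw $y\sim N(0,I_m)$ and return the Euclidean projection of $y$ onto $Q\cap[-1,1]^m$, computed with a separation oracle. I will use it with $\alpha=\frac12$. What remains is to turn the volume hypothesis $\Vol_m(K)\geq c^m$ into a Gaussian measure bound for $sK$ strong enough to give the required $\delta$.

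The first step is a crude Gaussian bound for $K$ itself. Since $K\subseteq[-1,1]^m$, the Gaussian density on $K$ is at least $(2\pi)^{-m/2}e^{-m/2}$. Hence
\[
\gamma_m(K)\;\geq\;\Big(\frac{c}{\sqrt{2\pi e}}\Big)^m=:e^{-\delta_0 m},
\]
where $\delta_0=\delta_0(c)$ depends only on $c$. This exponent may be much larger than the $\delta$ that Theorem 9 needs in order to guarantee $m/2$ tight coordinates.

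The second step, which I expect to be the main obstacle, is to boost the Gaussian measure by dilation. I want $s=s(c)$ with $\gamma_m(sK)\geq e^{-\delta m}$.
\begin{itemize*}
\item The first tool to try is the Latała--Oleszkiewicz S-inequality, or alternatively the log-concavity of $t\mapsto\gamma_m(tK)$ for symmetric $K$ (the B-inequality).
\item A more elementary route would also do. Since $K$ is symmetric and convex, $sK\supseteq \frac{s}{2}(K-K)$. The body $K$ has volume at least $c^m$ inside the cube, so a random Gaussian point $g$ should land in $sK$ except with small exponential probability. To see this, cover $[-R,R]^m\cap\{\|g\|_2\leq 2\sqrt m\}$ by $e^{O_c(m)}$ translates of $K$ (Rogers--Zong covering) and then use symmetry and convexity to absorb the translates after dilating.
\end{itemize*}
If the boosting only gives $\gamma_m(sK)\geq e^{-\delta' m}$ with $\delta'$ not yet small enough, I will instead run the projection argument of \cite{ReisRothvossRSA23} directly on $sK$. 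Its proof only needs that the projected point satisfies $\|x-y\|_2\leq O(\sqrt{\delta' m})$ with good probability, and that few coordinates of $y$ lie in $[-1,1]$ far from the boundary. Choosing $s$ large makes the error term negligible relative to $m/2$.

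Finally, with $Q:=sK$, Theorem 9 gives $x\in sK\cap[-1,1]^m$ with $|\{j:|x_j|=1\}|\geq m/2$. Here $s$ depends only on $c$, through $\delta_0(c)$ and the boosting step. The randomized polynomial-time claim follows because a separation oracle for $K$ immediately gives one for $sK\cap[-1,1]^m$, and the projection of a Gaussian onto this body can be computed with that oracle.
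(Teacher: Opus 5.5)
Your Step~1 is the paper's entire proof. Since $K\subseteq[-1,1]^m$, one has $\gamma_m(K)\ge (c/\sqrt{2\pi e})^m$, and the paper then applies \cite[Theorem 9]{ReisRothvossRSA23} directly. In the form used here, that theorem already covers symmetric convex bodies of Gaussian measure $e^{-\delta_0 m}$ for an \emph{arbitrary} constant $\delta_0$, and it returns a point of $s(\delta_0)K\cap[-1,1]^m$; the dilation is built into the cited result. You instead recall it as a small-$\delta$ statement and try to bridge the gap by dilating $K$. That step is not merely unproven but false.

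Here is a counterexample. For $c<2$, take $K:=\{x\in[-1,1]^m : |x_1|\le (c/2)^m\}$, a symmetric convex body with $\Vol_m(K)=c^m$. For every $s>0$,
\[
\gamma_m(sK)\le \Pr[|g_1|\le s(c/2)^m]\le s(c/2)^m .
\]
So $\gamma_m(sK)\ge e^{-\delta m}$ forces $s\ge e^{(\ln(2/c)-\delta)m}$. This is exponential in $m$ whenever $\delta<\ln(2/c)$, which is the relevant regime (e.g.\ $c=2^{-5}$ in the application). Because this contradicts the boosting claim itself, no tool can establish it. The S-inequality's extremal body is exactly such a slab, log-concavity in $t$ gives no lower bound here, and $e^{O_c(m)}$ covering translates cannot be absorbed by a constant dilation.

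Your fallback fails for the same reason. The error term $O(\sqrt{\delta' m})$ is governed by the Gaussian-measure exponent $\delta'$ of $sK$, and for this slab $\delta'$ does not shrink as $s$ grows. So ``choosing $s$ large'' does not make it negligible. Meanwhile the theorem's conclusion is trivial for this $K$ (take $x=(0,1,\ldots,1)$), which shows that the Gaussian measure of $sK$ is the wrong quantity to boost.

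Handling bodies of measure $e^{-\delta_0 m}$ with $\delta_0$ not small is precisely what \cite[Theorem 9]{ReisRothvossRSA23} adds beyond the small-$\delta$ partial coloring. It cannot be recovered from the small-$\delta$ version by dilation. Either cite it in its arbitrary-$\delta_0$ form, as the paper does, or redo the projection analysis with a quantity that genuinely improves under constant dilation. That is the substantive work your proposal skips.
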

To be exact, \cite[Theorem 9]{ReisRothvossRSA23} uses Gaussian measure instead of volume.
But for any set $K \subseteq [-1,1]^m$ we have $\gamma_m(K)/\Vol_m(K) \in [(2\pi e)^{-m/2}, (2\pi)^{-m/2}]$ which only results in a change of the constant $c$.

\paragraph{Probability.}
We use the following variant of Hoeffding's inequality.
For the sake of completeness, its short proof can be found in Appendix~\ref{sec:AppendixA}.
\begin{lemma} \label{lem:cubeconc}
Let $v \in \setR^m$ and $x \sim [-1,1]^m$ uniformly. Then $\mathbb{P}[\langle v, x \rangle > \sqrt{\lambda} \|v\|_2] \le 2^{-2\lambda}$ for any $\lambda \ge 0$.
\end{lemma}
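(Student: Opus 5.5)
Since each coordinate $x_i$ is independent and uniform on $[-1,1]$, I will use the exponential moment (Chernoff) method with the moment generating function of a single uniform variable. For $t \geq 0$,
\[
\E[e^{t v_i x_i}] = \frac{\sinh(t v_i)}{t v_i} \leq e^{t^2 v_i^2/6},
\]
which follows by comparing Taylor coefficients: $\frac{\sinh u}{u} = \sum_k \frac{u^{2k}}{(2k+1)!}$ and $e^{u^2/6} = \sum_k \frac{u^{2k}}{6^k k!}$, and $(2k+1)! \geq 6^k k!$ holds for all $k \geq 0$ by a one-line induction (the ratio of consecutive terms is $(2k+2)(2k+3) \geq 6(k+1)$). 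Multiplying over $i$ gives $\E[e^{t\langle v,x\rangle}] \leq e^{t^2\|v\|_2^2/6}$.

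By Markov's inequality, $\Pr[\langle v,x\rangle > \sqrt{\lambda}\|v\|_2] \leq \exp\big(t^2\|v\|_2^2/6 - t\sqrt{\lambda}\|v\|_2\big)$. Optimizing at $t = 3\sqrt{\lambda}/\|v\|_2$ gives the bound $e^{-3\lambda/2}$. This handles $v \neq 0$; if $v=0$ the event is $0>0$, which has probability zero. It then remains to compare constants: $e^{-3\lambda/2} \leq 2^{-2\lambda}$ iff $3/2 \geq 2\ln 2 \approx 1.386$, which is true. So the claimed bound holds for every $\lambda \geq 0$.

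The only step needing any care is the sub-Gaussian estimate for a uniform variable with variance proxy $1/3$. The cruder Hoeffding bound $e^{u^2/2}$ would only yield $e^{-\lambda/2}$, which is too weak. That is why I use the sharper $\sinh$ comparison above, and there the coefficient inequality $(2k+1)!\geq 6^k k!$ is the key (easy) point.
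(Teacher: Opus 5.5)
Your proof is correct and follows the paper's argument. Both use the exponential moment bound $\E[e^{t\langle v,x\rangle}]\le e^{t^2\|v\|_2^2/6}$, the choice $t=3\sqrt{\lambda}/\|v\|_2$ giving $e^{-3\lambda/2}$, and the comparison $e^{-3\lambda/2}\le 2^{-2\lambda}$. The only difference is how the one-dimensional bound is obtained: you compare the Taylor coefficients of $\sinh(u)/u$ and $e^{u^2/6}$, while the paper writes a uniform variable as $\sum_{k\ge1}2^{-k}z_k$ with independent random signs $z_k$ and multiplies the bounds $\cosh(u2^{-k})\le e^{u^2 4^{-k}/2}$.
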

We also use H\"older's Inequality which is a generalization of Cauchy-Schwarz.
\begin{lemma}[H\"older's Inequality] \label{lem:Hoelder}
 Let $X,Y \in \setR$ be jointly distributed random variables and let $(p,q)$ be  H\"older conjugates, i.e. $p,q \geq 1$ and $\frac{1}{p} + \frac{1}{q} = 1$. Then one has $\E[|X \cdot Y|] \leq \E[|X|^p]^{1/p} \cdot \E[|Y|^q]^{1/q}$.
 \end{lemma}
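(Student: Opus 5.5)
We prove Hölder's Inequality (Lemma~\ref{lem:Hoelder}) in the standard way, via Young's inequality and a normalization argument. The first step is the pointwise inequality $ab \leq \frac{a^p}{p} + \frac{b^q}{q}$ for $a,b \geq 0$ and conjugate exponents $p,q$. This follows from concavity of the logarithm. For $a,b>0$,
\[
\log\Big(\tfrac{1}{p}a^p + \tfrac{1}{q}b^q\Big) \geq \tfrac{1}{p}\log(a^p) + \tfrac{1}{q}\log(b^q) = \log(ab),
\]
and the case $a=0$ or $b=0$ is trivial. The degenerate pair $(p,q) = (1,\infty)$ is not covered by the statement, since $q \ge 1$ finite forces $p>1$ unless $q=\infty$. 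So we may assume $1<p,q<\infty$.

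The second step handles degenerate cases. Write $\alpha := \E[|X|^p]^{1/p}$ and $\beta := \E[|Y|^q]^{1/q}$. If either of them is infinite and the other is positive, the right-hand side is $+\infty$ and there is nothing to prove. If $\alpha = 0$, then $X = 0$ almost surely, so $XY = 0$ almost surely and the left-hand side vanishes; the same holds if $\beta=0$. With the convention $0\cdot\infty = 0$, this covers every remaining degenerate case.

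The main step is normalization. Assume $0<\alpha,\beta<\infty$ and apply Young's inequality pointwise with $a = |X|/\alpha$ and $b = |Y|/\beta$. Taking expectations and using linearity and monotonicity of expectation gives
\[
\frac{\E[|XY|]}{\alpha\beta} \leq \frac{\E[|X|^p]}{p\,\alpha^p} + \frac{\E[|Y|^q]}{q\,\beta^q} = \frac1p + \frac1q = 1,
\]
which is the claim. There is no genuine obstacle here; the only points needing care are the degenerate cases, namely a vanishing or infinite moment.
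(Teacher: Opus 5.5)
Your proof is correct. Young's inequality via concavity of $\log$ is right. Since both exponents are finite with $\frac1p+\frac1q=1$, you correctly get $1<p,q<\infty$ (the ``unless $q=\infty$'' clause in your phrasing is vacuous, but the conclusion is right). The zero and infinite moment cases are handled consistently with the convention $0\cdot\infty=0$, and the normalization step with $a=|X|/\alpha$, $b=|Y|/\beta$ gives the claim.

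The paper does not prove this lemma at all. It quotes H\"older's inequality as a standard fact and uses it only once, in Lemma~\ref{lem:fp-vs-f2}, with the conjugate pair $(\frac{2}{p},\frac{2}{2-p})$ for $1\le p<2$ under the uniform measure on $\{-1,1\}^m$. So there is no competing route to compare against: you have written out the standard textbook proof of a fact the paper takes for granted. In that application all moments are finite, so your degenerate-case analysis is harmless but not needed there.
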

  




\paragraph{Functions on the hypercube.}

For a function $f : \{ -1,1\}^m \to \setR$ on the hypercube we define its $p$-norm for $p \in [1,\infty)$
as $\|f\|_p := \E_{x \sim \{ -1,1\}^m}[|f(x)|^p]^{1/p}$.
Given a function $f : \{-1,1\}^m \to \mathbb{R}$ and $\rho \in [0,1]$, the \textit{noisy distribution} $N_\rho (x)$ at some $x \in \{-1,1\}^m$ is the random vector with independent coordinates and $i$-th coordinate $x_i$ with probability $\frac{1+\rho}{2}$, and $-x_i$ with probability $\frac{1-\rho}{2}$, so that $\E_{z \sim N_\rho (x)} [z] = \rho x$. The \textit{noisy operator} is defined as $T_\rho f (x) := \E_{z \sim N_\rho (x)} [f(z)]$. We will need Bonami's hypercontractivity theorem~\cite{Bonami1970}; see also
O'Donnell~\cite[Chapter~10.1]{ODonnell2014}.
\begin{theorem}\label{thm:hypercontractivity}
Let $f:\{-1,1\}^m\to\setR$, $1\le p\le 2$ and $0\le\rho\le\sqrt{ p-1}$. Then \[ \|T_\rho f\|_2 \le \|f\|_p.\]
\end{theorem}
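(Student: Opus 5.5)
The plan is the standard one: prove the one-dimensional case $m=1$ (the two-point inequality), then tensorize by induction on $m$ using Minkowski's integral inequality. First a reduction. For $0\le\rho\le\rho'\le 1$ we have $T_\rho = T_{\rho/\rho'}T_{\rho'}$. Since $T_{\rho/\rho'}$ is an averaging operator, it is a contraction in $\|\cdot\|_2$. Hence it suffices to treat $\rho=\sqrt{p-1}$.

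\emph{Case $m=1$.} Every $f:\{-1,1\}\to\setR$ has the form $f(x)=a+bx$, and then $T_\rho f(x)=a+\rho bx$, so $\|T_\rho f\|_2^2=a^2+\rho^2b^2$. We must show
\[
\big(a^2+(p-1)b^2\big)^{p/2}\le \tfrac12\big(|a+b|^p+|a-b|^p\big).
\]
If $a=0$ this holds because $p-1\le 1$. Otherwise, by homogeneity and symmetry we may take $a=1$ and $b=t\ge0$.

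For $t\le1$, expand with the binomial series:
\[
\tfrac12\big((1+t)^p+(1-t)^p\big)=\sum_{k\ge0}\binom{p}{2k}t^{2k}.
\]
Every coefficient $\binom{p}{2k}=p(p-1)(p-2)\cdots(p-2k+1)/(2k)!$ is nonnegative for $p\in[1,2]$. Indeed, $p$ and $p-1$ are nonnegative, and among the remaining $2k-2$ factors all are nonpositive, an even number of them. Keeping only the first two terms gives the lower bound $1+\frac{p(p-1)}2t^2$. Since $p/2\le1$, Bernoulli's inequality gives $1+\frac{p(p-1)}{2}t^2\ge(1+(p-1)t^2)^{p/2}$.

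For $t>1$, write the right side as $t^p\cdot\frac12\big((1+s)^p+(1-s)^p\big)$ with $s=1/t<1$. By the previous case this is at least $t^p(1+(p-1)s^2)^{p/2}=(t^2+p-1)^{p/2}$. Finally,
\[
(p-1)t^2+1-(t^2+p-1)=(p-2)(t^2-1)\le0,
\]
so $(t^2+p-1)^{p/2}\ge(1+(p-1)t^2)^{p/2}$, as required.

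\emph{Tensorization.} Write $z=(x,y)\in\{-1,1\}^{m-1}\times\{-1,1\}$. Then $T_\rho=T^{(x)}_\rho T^{(y)}_\rho$, where the two factors commute and act on separate coordinates. Using the induction hypothesis in $x$ for each fixed $y$ (applied to the function $x\mapsto T^{(y)}_\rho f(x,y)$), we get
\[
\|T_\rho f\|_2=\Big\|\,\|T^{(x)}_\rho T^{(y)}_\rho f\|_{L^2_x}\Big\|_{L^2_y}\le \Big\|\,\|T^{(y)}_\rho f\|_{L^p_x}\Big\|_{L^2_y}.
\]
Since $2/p\ge1$, Minkowski's integral inequality lets us swap the order of the norms:
\[
\Big\|\,\|T^{(y)}_\rho f\|_{L^p_x}\Big\|_{L^2_y}\le \Big\|\,\|T^{(y)}_\rho f\|_{L^2_y}\Big\|_{L^p_x}.
\]
Concretely, this is the triangle inequality in $L^{2/p}_y$ applied to $\E_x|\cdot|^p$. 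Now apply the one-dimensional case in $y$ for each fixed $x$, which gives $\|T^{(y)}_\rho f(x,\cdot)\|_{L^2_y}\le\|f(x,\cdot)\|_{L^p_y}$. The result is $\le\|\,\|f\|_{L^p_y}\|_{L^p_x}=\|f\|_p$.

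The only genuinely delicate step is the two-point inequality, specifically the sign pattern of the binomial coefficients and the reduction of the case $t>1$ to the case $t<1$. The tensorization step is routine once one checks that Minkowski's inequality is applied in the direction allowed by $p\le2$.
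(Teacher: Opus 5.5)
Your proof is correct. The paper gives no proof of this statement and simply imports Bonami's theorem from the cited references, so the relevant comparison is the standard textbook argument, and yours matches it: the two-point inequality $\bigl(a^2+(p-1)b^2\bigr)^{p/2}\le\tfrac12\bigl(|a+b|^p+|a-b|^p\bigr)$ via the generalized binomial series and Bernoulli's inequality (reducing $|b|>|a|$ by swapping roles, using $p\le 2$), then tensorization through Minkowski's inequality in $L^{2/p}$. The only cosmetic point is that at $t=1$ the binomial expansion needs justification, either by absolute convergence of $\sum_j\binom{p}{j}$ for $p>0$ or simply by continuity in $t$.
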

We use the following lemma to relate different norms:
\begin{lemma} \label{lem:fp-vs-f2}
  Let $f : \{ -1,1\}^m \to \setR$ and $1 \leq p < 2$. Then $\|f\|_p \leq \alpha^{1/p-1/2}\|f\|_2$
  where $\alpha := \Pr_{x \sim \{ -1,1\}^m}[f(x) \neq 0]$ is the density of the support.
\end{lemma}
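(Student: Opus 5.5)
The plan is to apply Hölder's inequality (Lemma~\ref{lem:Hoelder}) to the product $|f| \cdot \bm{1}_{\{f \neq 0\}}$. Here $\bm{1}_{\{f\neq 0\}}$ is the indicator of the support of $f$, and $x \sim \{-1,1\}^m$ is uniform. Since $|f(x)|^p = |f(x)|^p \cdot \bm{1}_{\{f(x)\neq 0\}}$ pointwise, it suffices to bound $\E[|f|^p \cdot \bm{1}_{\{f\neq 0\}}]$.

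I would use the conjugate exponents $r := \frac{2}{p}$ and $r' := \frac{2}{2-p}$. Both are at least $1$ because $1 \leq p < 2$, and they satisfy $\frac{1}{r} + \frac{1}{r'} = \frac{p}{2} + \frac{2-p}{2} = 1$. Applying Lemma~\ref{lem:Hoelder} with $X := |f|^p$ and $Y := \bm{1}_{\{f \neq 0\}}$ gives
\[
\E[|f|^p] \leq \E[|f|^{2}]^{p/2} \cdot \E\big[\bm{1}_{\{f\neq 0\}}\big]^{(2-p)/2} = \|f\|_2^p \cdot \alpha^{1-p/2}.
\]
This uses that the indicator raised to any positive power is itself, so its expectation is exactly $\alpha$.

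Taking $p$-th roots then yields
\[
\|f\|_p \leq \alpha^{\frac{1}{p}-\frac{1}{2}}\|f\|_2,
\]
because $(1-p/2)/p = 1/p - 1/2$. The degenerate case $\alpha = 0$ needs no separate treatment: then $f \equiv 0$ and both sides vanish.

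There is no real obstacle here. The only points to check are that the chosen exponents are valid Hölder conjugates in the range $1 \le p<2$, and that the indicator's moment equals $\alpha$.
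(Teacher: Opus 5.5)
Your proof is correct and is essentially the paper's argument: Hölder's inequality applied to $|f|^p\cdot\bm{1}_{\{f\neq 0\}}$ with the conjugate pair $(\frac{2}{p},\frac{2}{2-p})$, then taking $p$-th roots. Your explicit check of the exponent range and of the degenerate case $\alpha=0$ only adds detail the paper leaves implicit.
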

\begin{proof}
  In the following $x \sim \{ -1,1\}^m$.
 We can write
  \[
 \|f\|_p = \E[|f(x)|^p \cdot \bm{1}_{f(x) \neq 0}]^{1/p} \leq \E[|f(x)|^2]^{1/2} \cdot \alpha^{\frac{1}{p} \cdot \frac{2-p}{2}} = \|f\|_2 \cdot \alpha^{\frac{1}{p}-\frac{1}{2}}
\]
using H\"older's inequality (Lemma~\ref{lem:Hoelder}) with the conjugate pair $(\frac{2}{p},\frac{2}{2-p})$.
\end{proof}

\paragraph{Convex functions.}

Let $\Omega \subseteq \setR$ be a convex set. We say that a function $f : \Omega^m \to \setR$ is \emph{separately convex} if for any $t \in \Omega^m$ and $j \in [m]$, the function $s \mapsto f(t + se_j)$ is convex. We note that this does not imply that $f$ is convex. For example, all multilinear functions such as $f(x) = -x_1\cdot x_2$ are separately convex but not necessarily convex.
But separate convexity is still helpful: 
\begin{lemma}[Jensen's Inequality] \label{lem:JensenForSepConvexFcts}
  Let $f : \Omega^m \to \setR$ be a separately convex function and let $X \in \Omega^m$ be a random vector with independent coordinates. Then
  \[
    \E[f(X)] \geq f(\E[X])
  \]
\end{lemma}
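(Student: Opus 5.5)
We prove Lemma~\ref{lem:JensenForSepConvexFcts} by induction on the number of coordinates $m$, applying the one-dimensional Jensen inequality to one coordinate at a time. Independence of the coordinates lets us condition on the remaining coordinates without changing the distribution of the coordinate being averaged.

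Write $X = (X_1,\ldots,X_m)$ and $\mu := \E[X] = (\E[X_1],\ldots,\E[X_m])$. Note $\mu \in \Omega^m$ since $\Omega$ is convex. We interpolate through the hybrid vectors $Y^{(k)} := (\mu_1,\ldots,\mu_k,X_{k+1},\ldots,X_m)$ for $k=0,\ldots,m$, so that $Y^{(0)} = X$ and $Y^{(m)} = \mu$. It suffices to show $\E[f(Y^{(k-1)})] \geq \E[f(Y^{(k)})]$ for each $k \in [m]$; chaining these inequalities gives $\E[f(X)] \geq f(\mu)$.

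For fixed $k$, condition on $(X_{k+1},\ldots,X_m)$. By independence, $X_k$ is independent of these coordinates and still has law with mean $\mu_k$. The map $g(s) := f(\mu_1,\ldots,\mu_{k-1},s,X_{k+1},\ldots,X_m)$ is convex on $\Omega$ by separate convexity, namely convexity along $e_k$ at a base point in $\Omega^m$. The one-dimensional Jensen inequality gives $\E[g(X_k)\mid X_{k+1},\ldots,X_m] \geq g(\mu_k)$. Taking expectations over the remaining coordinates yields the step inequality.

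The only technical point is integrability: we need $\E[f(X)]$ to make sense and the conditional expectations to be well defined. For the paper's application this is no issue. Either $X$ takes finitely many values (the hypercube), or $f$ is continuous on the compact set $[-1,1]^m$ and hence bounded. In general, a convex function is bounded below by an affine function, so each conditional expectation is well defined in $(-\infty,\infty]$, and the argument goes through with values allowed to be $+\infty$. This is routine. The main point to check is that independence is genuinely needed for the conditional law of $X_k$ to keep mean $\mu_k$.
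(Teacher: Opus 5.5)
Your proof is correct and is the paper's argument: the paper inducts on $m$, applying one-dimensional Jensen to $X_1$ conditionally on $\bar X=(X_2,\ldots,X_m)$ via independence, and then applying the induction hypothesis to the still separately convex restriction $f(\E[X_1],\cdot)$, which is exactly your hybrid chain $Y^{(0)},\ldots,Y^{(m)}$ written recursively. Your integrability discussion goes beyond the paper, which ignores the issue; note that a separately convex function need not be continuous up to the boundary of $[-1,1]^m$, so in the paper's uses boundedness comes instead from $f$ being nonnegative and at most $\Vol_n(K)$ or $\det(B)$, or from $X$ being finitely supported.
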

\begin{proof}
  We prove this by induction over $m$. For $m = 1$ this is just convexity, so assume it holds for $m-1$ for some $m > 1$ and write $X = (X_1,\bar{X})$. Then 
  \[
   \E[f(X)] =  \E_{\bar{X}}[\E_{X_1}[f(X_1,\bar{X})]] \stackrel{\textrm{sep.}}{\geq} \E_{\bar{X}}[f(\E[X_1],\bar{X})] \stackrel{\textrm{induction}}{\geq} f(\E[X_1],\E[\bar{X}]) = f(\E[X])
  \]
  using independence and the fact that restrictions of $f$ are still separately convex.
\end{proof}
\section{Zonotopes and Minkowski subtraction}

The goal of this section is to derive a lower bound on the probability that a random zonotope $Z_{X}$ is contained in a given convex body $K$ where $X$ is any non-negative random vector with independent coordinates.

\subsection{Minkowski subtraction}

For two sets $K,L \subseteq \setR^n$, their \emph{Minkowski addition} is $K + L = \{ x + y \mid x \in K, y \in L\}$.
This operation has a conceptual inverse, first introduced by Hadwiger~\cite{Hadwiger1950} and independently by Pontryagin~\cite{Pontryagin1967V2}.
\begin{definition}
  For compact convex sets $K,L \subseteq \setR^n$ with $L$ nonempty, the \emph{Minkowski subtraction} is
  the set
  \[
   K \ominus L := \{ x \in \setR^n : x + L \subseteq K \}.
  \]
\end{definition}
We summarize a few of its properties, most of which can be found in the contemporary book of Schneider~\cite[Page 146+]{Schneider_2013}.
\begin{proposition} \label{prop:PropertiesOfSubtraction}
  Consider compact convex sets $K,L,M \subseteq \setR^n$ with $L,M$ nonempty.
  \begin{enumerate*}
  \item[(A)] One has $K \ominus L = \bigcap_{y \in L} (K-y)$.
  \item[(B)] $K \ominus L$ is convex.
  \item[(C)] $K \ominus L \neq \emptyset \iff L \subseteq K$ for symmetric $K, L$.
  \item[(D)] $(K \ominus L) \ominus M = K \ominus (L + M)$ (associativity).
  \item[(E)] $K \ominus \lambda K = (1-\lambda)K$ for $0 \leq \lambda \leq 1$.
  \end{enumerate*}
\end{proposition}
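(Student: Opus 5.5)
We prove the five items in order, each resting on the definition $K \ominus L = \{x : x + L \subseteq K\}$ and on the earlier items.

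For (A), $x + L \subseteq K$ holds iff $x + y \in K$ for every $y \in L$, i.e. iff $x \in K - y$ for every $y \in L$. This is exactly $x \in \bigcap_{y \in L}(K-y)$. Item (B) follows at once: each translate $K - y$ is convex, and intersections of convex sets are convex. The same representation shows $K \ominus L$ is closed and bounded, since it is contained in $K - y_0$ for any fixed $y_0 \in L$. Hence it is compact, which is what makes the iterated subtraction in (D) well defined.

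For (C), if $L \subseteq K$ then $0 \in K \ominus L$. Conversely, suppose $x \in K \ominus L$, so $x + L \subseteq K$. By symmetry of $K$ and $L$ we also have $-x + L = -(x + (-L)) = -(x+L) \subseteq -K = K$. For any $y \in L$, both $x+y$ and $-x+y$ lie in $K$, so by convexity their midpoint $y$ lies in $K$. Thus $L \subseteq K$. The hypothesis that $L$ is nonempty is only used implicitly in the definition.

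For (D), we unwind the definitions: $x \in (K \ominus L) \ominus M$ iff $x + z \in K \ominus L$ for all $z \in M$, iff $x + z + y \in K$ for all $y \in L$ and $z \in M$. This says $x + (L+M) \subseteq K$, which is membership in $K \ominus (L+M)$. No convexity is needed here.

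For (E), the inclusion $(1-\lambda)K \subseteq K \ominus \lambda K$ follows from convexity, using $(1-\lambda)K + \lambda K = K$ for convex $K$. The reverse inclusion is the only step with real content, and I expect it to be the main obstacle. I would use support functions: $x + \lambda K \subseteq K$ implies $\langle x, u\rangle + \lambda h_K(u) \leq h_K(u)$ for all $u$. Hence $\langle x,u\rangle \leq (1-\lambda)h_K(u) = h_{(1-\lambda)K}(u)$ for every $u$, and since $(1-\lambda)K$ is compact and convex, the separation theorem gives $x \in (1-\lambda)K$. The edge case $\lambda = 1$ works the same way: it yields $h_{\{x\}} \leq 0$, so $x = 0$, provided $K$ is nonempty (as it must be for the statement to be meaningful).
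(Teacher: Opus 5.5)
Your proof is correct. For (A), (B) and (D) it is the paper's argument. In (C) you unpack the paper's remark that $K \ominus L$ is symmetric into a direct midpoint computation ($x+y$ and $-x+y$ in $K$ give $y \in K$), which is the same idea.

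The genuine difference is the reverse inclusion $K \ominus \lambda K \subseteq (1-\lambda)K$ in (E). The paper argues by iteration. From $x + \lambda K \subseteq K$ and a point $x_0 \in K$ it builds $x_{k+1} := x + \lambda x_k \in K$. For $\lambda < 1$ these points converge to $(1-\lambda)^{-1}x$, which lies in $K$ because $K$ is closed. For $\lambda = 1$ the points $x_0 + kx$ stay in the bounded set $K$ only if $x = \bm{0}$.

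You instead compare support functions, $\langle x,u\rangle + \lambda h_K(u) \le h_K(u)$, and invoke the separation theorem. This buys you two things:
\begin{itemize}
\item Your argument is uniform in $\lambda$, with no special case at $\lambda = 1$.
\item It really proves the general description $K \ominus L = \{x : \langle x,u\rangle \le h_K(u) - h_L(u)\ \forall u\}$. This is the same support-function viewpoint the paper later uses to write $K$ and $Q$ in inequality form.
\end{itemize}
The price is that you need convexity of $K$ and duality. The paper's iteration is fully elementary. Its reverse inclusion needs only that $K$ is nonempty and compact; convexity enters only in the easy direction $(1-\lambda)K \subseteq K \ominus \lambda K$.

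Your side remark that $K \ominus L$ is itself compact (closed as an intersection of closed translates, and bounded inside $K - y_0$) is a useful addition. It makes the iterated subtraction in (D) fit the definition, a point the paper leaves implicit.
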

\begin{proof}
  One has $x \in K \ominus L$ if and only if $x + y \in K$ for every $y \in L$, so $K \ominus L = \bigcap_{y \in L} (K-y)$, and any intersection of convex sets must be convex itself. This proves (A) and (B).
  For (C) we use that if $L \subseteq K$ then $\bm{0} \in K \ominus L$ and conversely for symmetric $K,L$, $K \ominus L$ is also symmetric, so $\bm{0} \in K \ominus L$ if it is nonempty, which implies $L \subseteq K$.
  For (D) we can write
  \begin{eqnarray*}
    (K \ominus L) \ominus M &=& \{x \in \setR^n : (x + M) + L \subseteq K\} \\ &=& \{x \in \setR^n : x + (M+L) \subseteq K\} = K \ominus (L + M).
  \end{eqnarray*}
  Finally we prove (E).
  If $x \in (1-\lambda)K$, then $x + \lambda K \subseteq K$ by convexity, so $(1 - \lambda) K \subseteq K \ominus \lambda K$. If $K = \emptyset$ then clearly $K \ominus \lambda K = \emptyset$. Otherwise, suppose that $x \in K \ominus \lambda K$, so that $x + \lambda K \subseteq K$, and choose a point $x_0 \in K$. Define a sequence of points $x_{k+1} := x + \lambda x_k$ for $k \ge 0$ so that by induction $x_k \in K$ for all $k \in \mathbb{N}$. If $\lambda = 1$, then since $x_k = x_0 + kx \in K$ for all $k \in \mathbb{N}$ and $K$ is bounded, $x = 0$. Otherwise, $(1-\lambda)^{-1} x = \lim_{k \to \infty} x_k \in K$ since $K$ is closed, so $x \in (1-\lambda) K$.
\end{proof}
As a warning we note that other natural appearing properties of subtraction fail. In particular for non-empty compact convex sets $K,Q$ one has $(K \ominus Q) + Q \subseteq K$, but the inclusion can be strict. For
example for $K := [-1,1]^2$ and $Q := B_2^2$ one has $K \ominus Q = \{ \bm{0} \}$ and so $(K \ominus Q) + Q = Q \subset K$.

Here is a crucial property:
\begin{lemma} \label{lem:VolumeOfSubtractionIsConvexInS}
Let $K \subseteq \setR^n$ be a compact convex set and let $v \in \setR^n$. Let $f(s) := \Vol_n(K \ominus s[-v,v])$. Then the function $f$ is convex on $\setR_{\ge 0}$. 
\end{lemma}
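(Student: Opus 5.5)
The plan is to reduce the statement to a one-dimensional computation along lines parallel to $v$, where the subtraction acts by shortening each chord.

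\textbf{Setup.} If $v = \bm{0}$ then $f$ is constant and there is nothing to prove, so assume $v \neq \bm{0}$. First I would identify the set $K \ominus s[-v,v]$ explicitly. By Proposition~\ref{prop:PropertiesOfSubtraction}(A),
\[
K \ominus s[-v,v] = \bigcap_{t \in [-s,s]} (K - tv).
\]
By convexity of $K$, a point $x$ with $x + sv \in K$ and $x - sv \in K$ also satisfies $x + tv \in K$ for all $|t| \leq s$. Hence the intersection only needs the two endpoints:
\[
K \ominus s[-v,v] = (K - sv) \cap (K + sv).
\]

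\textbf{Fiber decomposition.} Next I would slice along lines parallel to $v$. For $y \in v^\perp$, the fiber $K \cap (y + \setR v)$ is either empty or a segment $\{ y + tv : t \in [a(y), b(y)] \}$, since $K$ is compact and convex. Write $\ell(y) := b(y) - a(y) \geq 0$ for its length in the parameter $t$, and set $\ell(y) := 0$ when the fiber is empty. The fiber of $(K - sv) \cap (K + sv)$ over $y$ is then
\[
\{ y + tv : t \in [a(y) + s,\, b(y) - s] \},
\]
which has parameter length $\max(\ell(y) - 2s, 0)$. By Fubini, with Lebesgue measure on the hyperplane $v^\perp$,
\[
f(s) = \|v\|_2 \int_{v^\perp} \max\big(\ell(y) - 2s,\, 0\big)\, dy .
\]

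\textbf{Conclusion.} For each fixed $y$, the map $s \mapsto \max(\ell(y) - 2s, 0)$ is the maximum of two affine functions and therefore convex. A nonnegative integral of convex functions is convex, since the convexity inequality integrates pointwise. The integrand is bounded and supported on the compact projection of $K$, so all integrals are finite, and $f$ is convex on $\setR_{\geq 0}$.

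\textbf{Expected obstacle.} I expect the only delicate point to be the fiberwise identification, in particular checking that the endpoint reduction above is valid. The measurability needed for Fubini is routine: $\ell$ is upper semicontinuous because $K$ is compact. Note that Brunn's principle (Theorem~\ref{thm:IntersectionConcavity}) gives only concavity of $f^{1/n}$, which points the wrong way, so the direct slicing argument is the route I would take.
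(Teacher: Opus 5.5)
Your proof is correct and is essentially the paper's argument: slice $K$ along lines parallel to $v$, note that each chord of length $\ell$ shrinks to length $(\ell-2s)_+$ (you measure in the parameter $t$ and put a factor $\|v\|_2$ in front, whereas the paper uses Euclidean length $(\ell(z)-2s\|v\|_2)_+$), and integrate these convex functions of $s$ over $v^\perp$. Your explicit reduction $K \ominus s[-v,v] = (K-sv)\cap(K+sv)$ justifies the fiberwise length formula, which the paper states without proof.
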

\begin{proof}
  If $v=\bm{0}$, $f$ is constant. Otherwise, consider a vector $z \in v^{\perp}$
  and consider the line $z + \setR v$. Say that the length of the intersection with $K$ is $\ell(z)$. The same line intersected with $K \ominus s [-v,v]$
  has length $(\ell(z) - 2s\|v\|_2)_+$ where $(t)_+ = \max\{ t,0\}$, and for each fixed $z$, the function $s \mapsto (\ell(z)-2s\|v\|_2)_+$ is convex on $\setR_{\ge 0}$.
  Then
  \[
 f(s) = \Vol_n(K \ominus s[-v,v]) = \int_{v^{\perp}} (\ell(z)-2s\|v\|_2)_+ dz
\]
is an average of convex functions which is again convex on $[0,\infty)$.
\end{proof}

\subsection{Minkowski subtraction and zonotopes}

As before, for a matrix $A \in \setR^{m \times n}$ with rows $a_1, \dots, a_m$
and $t \in \setR_{\geq 0}^m$ we abbreviate the zonotope $Z_t = \{ \sum_{i=1}^m y_ia_i : |y_i| \leq t_i \; \forall i \in [m]\}$.
We can generalize Lemma~\ref{lem:VolumeOfSubtractionIsConvexInS} from subtracting segments to subtracting zonotopes:
\begin{lemma} \label{lem:VolOfMinkSubtractionIsSepConvex}
  Let $K \subseteq \setR^n$ be a convex body and $Z = \sum_{j \in [m]} [-a_j, a_j]\subseteq \setR^n$ be a zonotope.
  Then $f : \setR_{\ge 0}^m \to \setR_{\geq 0}$ with $f(t) := \Vol_n(K \ominus Z_t)$ is separately convex.
\end{lemma}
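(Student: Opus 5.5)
Separate convexity reduces to Lemma~\ref{lem:VolumeOfSubtractionIsConvexInS}, using associativity of Minkowski subtraction from Proposition~\ref{prop:PropertiesOfSubtraction}(D).

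Fix $t \in \setR_{\geq 0}^m$ and an index $j \in [m]$. We must show that $s \mapsto f(t + s e_j)$ is convex on the interval of $s$ where $t_j + s \geq 0$. Write $t'$ for $t$ with its $j$-th coordinate set to $0$. Then
\[
Z_{t+se_j} = Z_{t'} + (t_j+s)[-a_j,a_j],
\]
since a zonotope is the Minkowski sum of its scaled segments. By Proposition~\ref{prop:PropertiesOfSubtraction}(D),
\[
K \ominus Z_{t+se_j} = (K \ominus Z_{t'}) \ominus (t_j+s)[-a_j,a_j].
\]
Set $K' := K \ominus Z_{t'}$. This set is convex by (B), and it is compact because it is an intersection of translates of the compact set $K$ by (A). Hence
\[
f(t+se_j) = \Vol_n\big(K' \ominus (t_j+s)[-a_j,a_j]\big) = g(t_j+s),
\]
where $g(u) := \Vol_n(K' \ominus u[-a_j,a_j])$.

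By Lemma~\ref{lem:VolumeOfSubtractionIsConvexInS}, $g$ is convex on $\setR_{\geq 0}$. Composing with the affine map $s \mapsto t_j + s$ preserves convexity, so $s \mapsto f(t+se_j)$ is convex. Since $t$ and $j$ were arbitrary, $f$ is separately convex.

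The main subtlety is that $K'$ may be empty or lower-dimensional, whereas Lemma~\ref{lem:VolumeOfSubtractionIsConvexInS} is stated for compact convex sets. If $K' = \emptyset$, every subtraction from it is empty, so $g \equiv 0$, which is convex. If $K'$ is lower-dimensional, then $K' \ominus u[-a_j,a_j] \subseteq K'$ has volume zero for all $u$, so again $g \equiv 0$. If $K'$ is full-dimensional, the lemma applies directly; its proof, which integrates $(\ell(z) - 2s\|a_j\|_2)_+$ over $a_j^\perp$, only needs compactness and convexity of $K'$. So no obstacle remains beyond checking that associativity (D) holds for possibly empty sets, and it does, since the set-builder argument given there uses no nonemptiness.
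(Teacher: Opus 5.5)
Your proof is correct and matches the paper's argument: you use associativity, Proposition~\ref{prop:PropertiesOfSubtraction}(D), to peel off the $j$-th segment and then apply Lemma~\ref{lem:VolumeOfSubtractionIsConvexInS} to $K \ominus Z_{t'}$. Your extra treatment of the cases where $K'$ is empty or lower-dimensional is a valid refinement that the paper leaves implicit, since that lemma is already stated for arbitrary compact convex sets.
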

\begin{proof}
  For any $k \in [m]$, we have
  \[
    K \ominus Z_t = \Big(K \ominus \sum_{j \in [m]\setminus \{k\}} t_j [-a_j, a_j]\Big) \ominus t_k [-a_k,a_k]
  \]
  by Proposition~\ref{prop:PropertiesOfSubtraction}(D), so the claim follows from Lemma~\ref{lem:VolumeOfSubtractionIsConvexInS}.
\end{proof}

Now we have everything in place to conclude an important insight. 
\begin{theorem}\label{thm:RandomZonotopeInclusion}
  Let $K \subseteq \setR^n$ be a symmetric convex body and $Z \subseteq \setR^n$ be a zonotope generated by $m$ segments.
  Let $X \in \setR_{\ge 0}^m$ be a random vector with independent coordinates. Then
  \[
    \mathbb{P}[Z_X \subseteq K] \geq \frac{\Vol_n(K \ominus Z_{\E[X]})}{\Vol_n(K)}
  \]
\end{theorem}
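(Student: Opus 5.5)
The plan is to apply Jensen's inequality for separately convex functions (Lemma~\ref{lem:JensenForSepConvexFcts}) to the function $f(t) := \Vol_n(K \ominus Z_t)$ on $\Omega^m$ with $\Omega = \setR_{\geq 0}$. By Lemma~\ref{lem:VolOfMinkSubtractionIsSepConvex}, $f$ is separately convex. Since $X$ has independent nonnegative coordinates, Lemma~\ref{lem:JensenForSepConvexFcts} gives
\[
  \E[f(X)] \geq f(\E[X]) = \Vol_n(K \ominus Z_{\E[X]}).
\]
If some $\E[X_i] = \infty$, I interpret the right-hand side appropriately (or reduce to the integrable case). I would also check that the finite-dimensional Jensen argument extends to general distributions, not just finitely supported ones. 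This is immediate from the one-dimensional Jensen inequality applied in the induction, since each restriction is a convex function of one real variable.

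The second step bounds $f(X)$ pointwise by $\Vol_n(K)\cdot \bm{1}_{[Z_X \subseteq K]}$. If $Z_X \not\subseteq K$, then since $K$ and $Z_X$ are both symmetric, Proposition~\ref{prop:PropertiesOfSubtraction}(C) gives $K \ominus Z_X = \emptyset$, so $f(X) = 0$. If $Z_X \subseteq K$, then $K \ominus Z_X \subseteq K - y$ for any $y \in Z_X$ by Proposition~\ref{prop:PropertiesOfSubtraction}(A). Taking $y = \bm{0} \in Z_X$ gives $K \ominus Z_X \subseteq K$, hence $f(X) \leq \Vol_n(K)$. Together these yield
\[
  \E[f(X)] \leq \Vol_n(K)\, \Pr[Z_X \subseteq K].
\]
Combining this with the Jensen bound and dividing by $\Vol_n(K) > 0$ (as $K$ is a body) gives the claim.

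The only delicate point is the applicability of Jensen's inequality: measurability of $f$ and integrability. Both are benign. First, $f$ is bounded by $\Vol_n(K)$ wherever it is nonzero, and by separate convexity it is continuous in each coordinate on the open orthant. Second, $f$ is nonincreasing in each $t_i$, since a larger $Z_t$ leaves a smaller $K \ominus Z_t$. So no integrability issue arises, and I expect this step to be routine.
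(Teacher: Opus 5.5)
Your proposal is correct and matches the paper's proof: apply Jensen's inequality (Lemma~\ref{lem:JensenForSepConvexFcts}) to the separately convex $f(t)=\Vol_n(K\ominus Z_t)$, then use the pointwise bound $f(X)\le \Vol_n(K)\cdot\bm{1}_{[Z_X\subseteq K]}$, which follows from Proposition~\ref{prop:PropertiesOfSubtraction}(C). Your justification of $K\ominus Z_X\subseteq K$ via $\bm{0}\in Z_X$, and your remarks on measurability and on the case $\E[X_i]=\infty$, supply details the paper leaves implicit.
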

\begin{proof}
  We abbreviate $f(t) := \Vol_n(K \ominus Z_t)$ for $t \in \setR_{\geq 0}^m$ which is a separately convex function by Lemma~\ref{lem:VolOfMinkSubtractionIsSepConvex}. Then
  \begin{eqnarray*}
\mathbb{P}[Z_X \subseteq K] \cdot \Vol_n(K) &\geq& \mathbb{P}[\Vol_n(K \ominus Z_X)>0] \cdot \Vol_n(K)  \\ &\geq& \E[\Vol_n(K \ominus Z_X)] \\ &\geq& \Vol_n(K \ominus Z_{\E[X]}),
  \end{eqnarray*}
  using Lemma~\ref{lem:JensenForSepConvexFcts}. Moreover
 $\Vol_n(K \ominus Z_X)>0$ implies that $K \ominus Z_X \neq \emptyset \iff Z_X \subseteq K$. Rearranging  finishes the proof.
\end{proof}
Let us summarize that we learned so far. If we apply Theorem~\ref{thm:RandomZonotopeInclusion} with $K := (1+\varepsilon)Z$, we obtain
\[
  \Pr_{x \sim [-1,1]^m}[Z_{\bm{1}+x} \subseteq (1+\varepsilon) Z] \geq \frac{\Vol_n((1+\varepsilon)Z \ominus Z)}{\Vol_n((1+\varepsilon) Z)} = \Big(\frac{\varepsilon}{1+\varepsilon}\Big)^n
\]
Together with the framework that we develop in Section~\ref{sec:SizeOfSymmetrizer} and Section~\ref{sec:ProofOfLinSizeSparsifier} this probability would suffice for $\ell_1$-sparsifiers of size $O(\frac{n}{\varepsilon^2} \log(\frac{1}{\varepsilon}))$. In fact, for this bound the reader may skip Section~\ref{sec:InclusionOfRandZon} entirely.
However, next we will prove a stronger bound of $\mathbb{P}_{x \sim [-1,1]^m}[Z_{\bm{1}+x} \subseteq Z] \geq c^n$ which will then lead to the tight bound of $O(\frac{n}{\varepsilon^2})$ promised in Theorems~\ref{thm:L1Sparsifier}, \ref{thm:L1SparsifierV2} and \ref{thm:L1SparsifierV3}.

\section{Inclusion of random zonotopes\label{sec:InclusionOfRandZon}}

The proof will rely on the following crucial lemma:

\begin{lemma}\label{lem:hyperhomog}
Let $f : [-1,1]^m \to \mathbb{R}_{\ge 0}$ be separately convex and not identically zero so that $f(\rho x) = \rho^n f(x)$ for all $x \in \{-1,1\}^m$ and $0 \leq \rho \leq 1$. Then \[ \Pr_{x \sim \{-1,1\}^m } [f(x) > 0] \ge e^{-2n}. \]
\end{lemma}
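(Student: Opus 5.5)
Write $g$ for the restriction of $f$ to the hypercube $\{-1,1\}^m$ and $\alpha := \Pr_{x \sim \{-1,1\}^m}[f(x)>0]$. The plan is to compare $T_\rho g$ with $g$ pointwise, and then use hypercontractivity (Theorem~\ref{thm:hypercontractivity}) together with Lemma~\ref{lem:fp-vs-f2} to force $\alpha$ to be large.

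\textbf{Step 1 (pointwise comparison).} Fix $x \in \{-1,1\}^m$ and $\rho \in [0,1]$. The random vector $z \sim N_\rho(x)$ has independent coordinates in $[-1,1]$ and $\E[z] = \rho x$. Since $f$ is separately convex, Lemma~\ref{lem:JensenForSepConvexFcts} gives
\[
T_\rho g(x) = \E_{z\sim N_\rho(x)}[f(z)] \geq f(\rho x) = \rho^n f(x).
\]
As $f \geq 0$, this yields $T_\rho g \geq \rho^n g \geq 0$ pointwise, and therefore $\|T_\rho g\|_2 \geq \rho^n \|g\|_2$.

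One point needs care before going on: $g$ could vanish identically on the cube even though $f$ is not identically zero. This cannot happen. Write any $y \in [-1,1]^m$ as the mean of a product distribution on $\{-1,1\}^m$. Jensen's inequality then gives $f(y) \leq \E[f(\text{vertex})]$, so $f \not\equiv 0$ implies $g \not\equiv 0$. Hence $\|g\|_2>0$.

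\textbf{Step 2 (hypercontractivity).} Take $1 \leq p < 2$ and $\rho = \sqrt{p-1}$. Theorem~\ref{thm:hypercontractivity} and Lemma~\ref{lem:fp-vs-f2} give
\[
\rho^n\|g\|_2 \leq \|T_\rho g\|_2 \leq \|g\|_p \leq \alpha^{1/p-1/2}\|g\|_2.
\]
Dividing by $\|g\|_2>0$ leaves
\[
\alpha \geq (p-1)^{\frac{n}{2(1/p-1/2)}} = (p-1)^{\frac{np}{2-p}}.
\]

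\textbf{Step 3 (choosing $p$).} It remains to pick $p$ so that $(p-1)^{p/(2-p)} \geq e^{-2}$. Set $p = 2-\delta$. The exponent becomes $\frac{2-\delta}{\delta}\ln(1-\delta)$. Using $\ln(1-\delta)\geq -\delta/(1-\delta)$, this is at least $-\frac{2-\delta}{1-\delta}$. Letting $\delta \to 0$ pushes this toward $-2$, and the bound $\alpha \geq e^{-2n}$ follows in the limit. Since the inequality holds for every $\delta$, taking the supremum over $p$ gives exactly $e^{-2n}$.

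The \textbf{main obstacle} is Step 1. One must check that Jensen for separately convex functions applies to the noisy distribution, which it does because the coordinates are independent and $f$ is defined on all of $[-1,1]^m$. One must also handle the case where $g$ might vanish on the cube, which the argument above does. The rest is a routine optimization over $p$.
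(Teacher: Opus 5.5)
Your proof is correct and follows the paper's argument essentially step for step. It uses the same Jensen-based reduction showing $f$ is nonzero on $\{-1,1\}^m$, the same pointwise bound $T_\rho f \ge f(\rho x) = \rho^n f$, hypercontractivity with $\rho=\sqrt{p-1}$ combined with Lemma~\ref{lem:fp-vs-f2}, and a final limit $p\to 2$ (the paper writes this as $\rho\to 1$). One wording slip: no single $p$ actually gives $(p-1)^{p/(2-p)}\ge e^{-2}$, because $\frac{2-\delta}{\delta}\ln(1-\delta)$ is strictly below $-2$ for every $\delta\in(0,1)$, so the bound genuinely rests on the limiting/supremum step, which you do take at the end.
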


\begin{proof}
  Let $\alpha := \Pr_{x \sim \{ -1,1\}^m}[f(x) > 0]$ be the parameter that we aim to lowerbound.
  Since $f$ is not identically zero, take $x^* \in [-1,1]^m$ with $f(x^*) > 0$. As $x^*$ lies in the hypercube, there is a product distribution $\pazocal{D}$ supported on the vertices $\{ -1,1\}^m$ so that $\E_{z \sim \pazocal{D}}[z] = x^*$. Then by separate convexity and Lemma~\ref{lem:JensenForSepConvexFcts} we have $\E_{z \sim \pazocal{D}}[f(z)] \ge f(x^*) > 0$. Hence we may conclude that $f$ is not identically zero on the hypercube vertices $\{-1,1\}^m$.

  Let $\rho \in (0,1)$ be a parameter that we determine later and let $p \in (1,2)$ be the
  unique parameter so that $\rho = \sqrt{p-1}$ as to satisfy Theorem~\ref{thm:hypercontractivity}.
  First we note that for any $x \in \{-1,1\}^m$, by separate convexity and Lemma~\ref{lem:JensenForSepConvexFcts} we have  
  \begin{equation} \label{eq:TrhoFvsF}
    T_\rho f(x) = \E_{z \sim N_\rho (x)} [f(z)] \ge f\Big(\E_{z \sim N_\rho (x)} [z]\Big) = f(\rho x) \ge \rho^n f(x).
  \end{equation}
  Then applying hypercontractivity (Theorem~\ref{thm:hypercontractivity}) and Lemma~\ref{lem:fp-vs-f2}
  and using non-negativity of $f$ we obtain
  \[
\rho^n\|f\|_2  \stackrel{\eqref{eq:TrhoFvsF}}{\leq} \|T_{\rho}f\|_2 \leq  \|f\|_p \leq \|f\|_2 \cdot \alpha^{\frac{1}{p}-\frac{1}{2}}
  \]
  
  Since $f$ is not identically zero on $\{-1,1\}^m$, it follows that $\|f\|_2 > 0$. As $p = 1+\rho^2$,
  rearranging for $\alpha$ and taking the limit $\rho \to 1$ gives
  \[
   \alpha \geq \lim_{\rho \to 1} \rho^{\frac{2(1+\rho^2)}{1-\rho^2} \cdot n} = e^{-2n}. \qedhere
  \]
\end{proof}
The result also applies to $x \sim [-1,1]^m$.
\begin{corollary}\label{cor:hyperhomogCont}
  Let $f : [-1,1]^m \to \mathbb{R}_{\ge 0}$ be separately convex so that $f(\rho x) = \rho^n f(x)$ for all $x \in [-1,1]^m$ and $0 \leq \rho \leq 1$.
Assume that $f$ is strictly positive on the orthant $\setR_{<0}^m$.
Then
\[ \Pr_{x \sim [-1,1]^m } [f(x) > 0] \ge e^{-2n}. \]
\end{corollary}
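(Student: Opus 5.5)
The plan is to reduce to Lemma~\ref{lem:hyperhomog} by decomposing a uniform point of $[-1,1]^m$ into a random scaling of the cube and a random sign pattern. Write $x \sim [-1,1]^m$ as $x = (r_1\sigma_1,\ldots,r_m\sigma_m)$, where $\sigma \sim \{-1,1\}^m$ and $r \sim [0,1]^m$ are independent. For a fixed $r \in (0,1]^m$, I will consider the function
\[
g_r : [-1,1]^m \to \setR_{\geq 0}, \qquad g_r(y) := f(r_1y_1,\ldots,r_my_m).
\]
If I can show that, for almost every $r$, the function $g_r$ satisfies the hypotheses of Lemma~\ref{lem:hyperhomog}, then that lemma gives $\Pr_\sigma[f(r\circ\sigma)>0] = \Pr_\sigma[g_r(\sigma)>0] \ge e^{-2n}$. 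Averaging over $r$ then yields the claimed bound.

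Next I will check the hypotheses for $g_r$.

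\emph{Separate convexity.} Each one-variable restriction of $g_r$ is a restriction of $f$ composed with a linear map $y_j \mapsto r_jy_j$, so it is convex.

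\emph{Nonnegativity.} This is inherited directly from $f$.

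\emph{Homogeneity.} For $y \in \{-1,1\}^m$ and $0\le\rho\le1$, we have $g_r(\rho y) = f(\rho (r\circ y)) = \rho^n f(r\circ y) = \rho^n g_r(y)$. This uses the homogeneity hypothesis of the corollary, which holds for all points of $[-1,1]^m$, since $r\circ y$ lies in the cube.

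\emph{Not identically zero.} Take $y = -\bm{1}$. If all $r_i>0$, which holds almost surely, then $r\circ y = -r \in \setR^m_{<0}$, so $f(-r)>0$ by the positivity assumption on the negative orthant. Hence $g_r(-\bm{1})>0$.

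Combining these, Lemma~\ref{lem:hyperhomog} applies to $g_r$ for almost every $r$. By Fubini,
\[
\Pr_{x\sim[-1,1]^m}[f(x)>0] = \E_r\big[\Pr_\sigma[g_r(\sigma)>0]\big] \ge e^{-2n}.
\]

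The main subtlety I expect is measure-theoretic. I need to confirm that the map $(r,\sigma)\mapsto r\circ\sigma$ pushes the product of the uniform measures on $[0,1]^m$ and $\{-1,1\}^m$ forward to the uniform measure on $[-1,1]^m$; this holds coordinatewise, since $|x_i|$ and $\mathrm{sign}(x_i)$ are independent with the stated laws. I also need the set $\{f>0\}$ to be measurable. This is fine because separately convex functions are continuous in each variable, and in any case the intended application is continuous. The null set where some $r_i=0$ is irrelevant.
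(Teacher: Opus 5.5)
Your proposal is correct and matches the paper's proof essentially step for step. The paper also splits each coordinate into an independent magnitude and a Rademacher sign, applies Lemma~\ref{lem:hyperhomog} to the rescaled function $g_r(y)=f(r\circ y)$ (which is nonzero at $-\bm{1}$ whenever all $r_i>0$), and averages over $r$. Your measurability remarks go beyond what the paper checks but are harmless; strictly, separate convexity gives separate continuity only on the open cube, which suffices since the boundary is a null set.
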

\begin{proof}
  We can sample each coordinate $x_i$ by first sampling its absolute value $\sigma_i \sim [0,1]$ and then independently sampling $\varepsilon_i \sim \{-1,1\}$, so that $x_i=\varepsilon_i \sigma_i$. After we sampled
  $\sigma$, we define $g : [-1,1]^m \to \setR_{\geq 0}$ with $g(y) := f( (y_i \cdot \sigma_i)_{i \in [m]})$. Then $g$ is still separately convex with $g(\rho y) = \rho^n g(y)$ for all $y \in [-1,1]^m$ and $0 \leq \rho \leq 1$. Also, by assumption $g$ is not identically zero almost surely.
  Hence for any conditioning $\sigma$ we have $\Pr[f(x) > 0 \mid \sigma] = \Pr_{\varepsilon \sim \{ -1,1\}^m}[g(\varepsilon) > 0] > e^{-2n}$ by Lemma~\ref{lem:hyperhomog} which gives the claim.
\end{proof}

\subsection{The main theorem}

Now we are ready to show the main theorem of this section.

\begin{theorem} \label{thm:UniformRandomZonotopeInclusion}
  Let $Z \subseteq \setR^n$ be a zonotope generated by the rows of $A \in \setR^{m \times n}$.  Then
  \[
    \Pr_{x \sim [-1,1]^m}[Z_{\bm{1}+x} \subseteq Z] \geq e^{-2n}.
  \]
\end{theorem}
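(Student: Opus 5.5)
We will apply Corollary~\ref{cor:hyperhomogCont} to the function
\[
  f(x) := \Vol_n(Z \ominus Z_{\bm{1}+x}), \qquad x \in [-1,1]^m,
\]
where, as in the overview, we may assume $Z$ is full-dimensional (otherwise we work inside the span of the rows of $A$ and use $\Vol_{\dim}$ there). The plan is to verify the three hypotheses of the corollary, namely separate convexity, the homogeneity $f(\rho x)=\rho^n f(x)$, and strict positivity on $\setR^m_{<0}$. We then translate $f(x)>0$ into $Z_{\bm{1}+x}\subseteq Z$ via Proposition~\ref{prop:PropertiesOfSubtraction}(C).

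\textbf{Separate convexity.} The map $x \mapsto \bm{1}+x$ is affine and sends $[-1,1]^m$ into $\setR^m_{\geq 0}$. Lemma~\ref{lem:VolOfMinkSubtractionIsSepConvex}, applied with $K := Z$, says that $t \mapsto \Vol_n(Z\ominus Z_t)$ is separately convex on $\setR^m_{\geq 0}$. Composing a separately convex function with a coordinatewise affine map keeps it separately convex, so $f$ is separately convex. Nonnegativity of $f$ is clear.

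\textbf{Homogeneity.} This step carries the real content. Fix $x\in[-1,1]^m$ and $0\le\rho\le 1$, and write $\bm{1}+\rho x = (1-\rho)\bm{1} + \rho(\bm{1}+x)$. Since all entries are nonnegative, $Z_{s+t} = Z_s + Z_t$ for $s,t\in\setR^m_{\ge0}$, and $Z_{\lambda t} = \lambda Z_t$ for $\lambda \geq 0$. Hence $Z_{\bm{1}+\rho x} = (1-\rho)Z + \rho Z_{\bm{1}+x}$. Using associativity (D) and then (E), we get
\[
 Z\ominus Z_{\bm{1}+\rho x} = (Z \ominus (1-\rho)Z)\ominus \rho Z_{\bm{1}+x} = \rho Z \ominus \rho Z_{\bm{1}+x} = \rho\,(Z\ominus Z_{\bm{1}+x}).
\]
The last equality holds because Minkowski subtraction commutes with dilation by $\rho>0$; the case $\rho=0$ is trivial, since both sides of the homogeneity identity vanish when $n\ge1$. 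Taking volumes gives $f(\rho x)=\rho^n f(x)$.

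\textbf{Positivity and conclusion.} For $x\in\setR^m_{<0}$, put $\mu := \max_i (1+x_i) < 1$. Then $Z_{\bm{1}+x}\subseteq \mu Z$. Subtracting a larger set gives a smaller result, so by (E)
\[
Z\ominus Z_{\bm{1}+x} \supseteq Z\ominus \mu Z = (1-\mu)Z,
\]
and therefore $f(x) \ge (1-\mu)^n\Vol_n(Z) > 0$. Corollary~\ref{cor:hyperhomogCont} now gives $\Pr_{x}[f(x)>0]\ge e^{-2n}$. Whenever $f(x)>0$, the set $Z\ominus Z_{\bm{1}+x}$ is nonempty. Both $Z$ and $Z_{\bm{1}+x}$ are symmetric, so Proposition~\ref{prop:PropertiesOfSubtraction}(C) yields $Z_{\bm{1}+x}\subseteq Z$, which proves the theorem.

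I expect the homogeneity step to be the only delicate point. It needs the decomposition of $Z_{\bm{1}+\rho x}$ as a Minkowski sum, which is valid precisely because the coordinates of $\bm{1}+x$ are nonnegative. It also needs the exact identity $Z\ominus(1-\rho)Z=\rho Z$ from (E), rather than a mere inclusion.
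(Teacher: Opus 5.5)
Your proof is correct and follows the paper's argument essentially step for step. You use the same function $f(x)=\Vol(Z\ominus Z_{\bm{1}+x})$, get separate convexity from Lemma~\ref{lem:VolOfMinkSubtractionIsSepConvex}, derive homogeneity from $Z_{\bm{1}+\rho x}=(1-\rho)Z+\rho Z_{\bm{1}+x}$ with Proposition~\ref{prop:PropertiesOfSubtraction}(D),(E), and then conclude with Corollary~\ref{cor:hyperhomogCont} and (C). Your explicit check of positivity on the negative orthant via $Z\ominus Z_{\bm{1}+x}\supseteq(1-\mu)Z$ fills in a step the paper only asserts. In the lower-dimensional case, note that the homogeneity degree becomes the dimension $d$ of the span of the rows, which gives $e^{-2d}\ge e^{-2n}$; the case $d=0$ is trivial.
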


\begin{proof}
Let $V:=\span\{a_1,\ldots,a_m\}$ and $d:=\dim V$. Assume without loss of generality that $d > 0$. For $x\in[-1,1]^m$, define
\[
f(x):=\Vol_V(Z\ominus Z_{\bm{1}+x}).
\]
By Lemma~\ref{lem:VolOfMinkSubtractionIsSepConvex}, $f$ is separately convex.
For $x \in [-1,1]^m$ and $\rho\in [0,1]$, we have
\[
Z_{\bm{1}+\rho x}
=(1-\rho)Z+\rho Z_{\bm{1}+x}.
\]
Using Prop~\ref{prop:PropertiesOfSubtraction}(D, E) we may write
\[
\begin{aligned}
Z \ominus Z_{\bm{1}+\rho x}
  &= Z \ominus ((1-\rho)Z+\rho Z_{\bm{1}+x}) \\
  &= (Z \ominus (1-\rho)Z) \ominus \rho Z_{\bm{1}+x} \\
  &= \rho Z \ominus \rho Z_{\bm{1}+x} \\
  &= \rho (Z \ominus Z_{\bm{1}+x}),
\end{aligned}
\]
and so $f(\rho x)=\rho^d f(x)$. Moreover $f$ is positive on the strictly negative orthant, 
so Cor~\ref{cor:hyperhomogCont}  yields
\[
\Pr_{x\sim[-1,1]^m}[f(x)>0]
\geq e^{-2d}\geq e^{-2n}.
\]
Finally, $f(x)>0$ implies $Z_{\bm{1}+x}\subseteq Z$ by
Prop~\ref{prop:PropertiesOfSubtraction}(C). 
\end{proof}

\section{The size of the symmetrizer\label{sec:SizeOfSymmetrizer}}

Consider a set $K \subseteq \setR^m$ and a subset $S \subseteq [m]$ of coordinates. We denote by $K_S := \{ x' \in \setR^S : (x',\mathbf{0}) \in K\}$ the intersection of $K$ with the coordinate subspace corresponding to $S$, where $\mathbf{0} \in \setR^{[m] \setminus S}$.
We summarize what we can derive from the last section:
\begin{corollary} \label{thm:VolumetricLowerBoundSections}
  Let $Z \subseteq \setR^n$ be a zonotope generated by the rows of $A \in \setR^{m \times n}$ and let $\eps > 0$. Then the set
  \[ K := \{x \in [-1,1]^m : Z_{\mathbf{1}+x} \subseteq (1+\varepsilon)Z\}\] contains $[-\varepsilon,\varepsilon]^m$, is convex and $\Vol_S (K_S) \ge c^n \cdot 2^{|S|}$ for every $S \subseteq [m]$ where $c>0$ is a universal constant.
\end{corollary}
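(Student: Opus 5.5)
The plan is to verify the three claims separately, using only Proposition~\ref{prop:PropertiesOfSubtraction} and Theorem~\ref{thm:UniformRandomZonotopeInclusion}. The constant will be $c := e^{-2}$.

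\emph{Containment of $[-\varepsilon,\varepsilon]^m$.} Take $x \in [-\varepsilon,\varepsilon]^m$. Then $\bm{1}+x \le (1+\varepsilon)\bm{1}$ coordinatewise, and $Z_t$ is monotone in $t$ because each segment $t_i[-a_i,a_i]$ grows with $t_i$. Hence $Z_{\bm{1}+x} \subseteq Z_{(1+\varepsilon)\bm{1}} = (1+\varepsilon)Z$.

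\emph{Convexity.} Since both zonotopes are symmetric, the inclusion $Z_{\bm{1}+x} \subseteq (1+\varepsilon)Z$ is equivalent to the support function inequality
\[
\sum_i (1+x_i)|\langle a_i,y\rangle| \le (1+\varepsilon)\|Ay\|_1 \quad \text{for all } y \in \setR^n .
\]
For each fixed $y$ this is a linear inequality in $x$. So $K$ is an intersection of halfspaces with the cube $[-1,1]^m$, and is therefore convex.

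\emph{Volume of sections.} Fix $S \subseteq [m]$. For $x' \in [-1,1]^S$ let $x = (x',\mathbf{0})$. Then $Z_{\bm{1}+x} = Z^{S}_{\bm{1}+x'} + \sum_{i\notin S}[-a_i,a_i]$, where $Z^S := \sum_{i\in S}[-a_i,a_i]$ is the zonotope generated by the rows $(a_i)_{i\in S}$. If $Z^S_{\bm{1}+x'} \subseteq Z^S$, then adding the fixed segments outside $S$ to both sides gives $Z_{\bm{1}+x} \subseteq Z \subseteq (1+\varepsilon)Z$. Thus $x' \in K_S$.

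Applying Theorem~\ref{thm:UniformRandomZonotopeInclusion} to the $|S|\times n$ submatrix $A_S$ gives
\[
\Pr_{x'\sim[-1,1]^S}\bigl[Z^S_{\bm{1}+x'}\subseteq Z^S\bigr] \ge e^{-2n}.
\]
Hence $\Vol_S(K_S) \ge e^{-2n}\, 2^{|S|}$. The case $S = \emptyset$ is trivial with the convention that the volume is $1$, and if the span of $A_S$ has dimension $0$ the probability is $1$.

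No step is a real obstacle. The only care needed is the reduction to the subzonotope $Z^S$ in the last step: restricting to a coordinate section leaves the remaining segments at full length, and one has to check that this only helps the inclusion. The convexity claim also needs the support-function formulation, which is valid because both sets are symmetric convex bodies.
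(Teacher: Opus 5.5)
Your proof is correct and matches the paper's. Convexity comes from the support-function (linear-inequality) description of $K$. The section bound comes from applying Theorem~\ref{thm:UniformRandomZonotopeInclusion} to the rows indexed by $S$, after adding the untouched segments $\sum_{i\notin S}[-a_i,a_i]$ to both sides of the inclusion; you only need the trivial direction $A\subseteq B\Rightarrow A+C\subseteq B+C$, while the paper quotes the full cancellation law.

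Your monotonicity argument for $[-\varepsilon,\varepsilon]^m\subseteq K$ is equivalent to reading it off the inequality form. As in the paper, it tacitly requires $\varepsilon\le 1$, since $K\subseteq[-1,1]^m$ by definition.
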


\begin{proof} 
  We may equivalently write
  \begin{equation} \label{eq:KinInequalityForm}
    K = \Big\{x \in [-1,1]^m : \sum_{i=1}^m x_i \cdot |\langle a_i, y \rangle| \le \eps \sum_{i =1}^m |\langle a_i, y \rangle| \ \forall y \in \setR^n\Big\},
 \end{equation}
 which means that $K \supseteq [-\eps,\eps]^m$ is the intersection of halfspaces and therefore convex. 
 For fixed $S \subseteq [m]$ we have
 \[
   \frac{\Vol_S(K_S)}{2^{|S|}} \geq \Pr_{x_S \sim [-1,1]^S}[(Z_{\bm{1}_S+x_S}+Z_{\bm{1}_{\bar{S}}}) \subseteq Z] =
   \Pr_{x_S \sim [-1,1]^S}[Z_{\bm{1}_S+x_S} \subseteq Z_{\bm{1}_S}] \geq e^{-2n}
 \]
 by Theorem~\ref{thm:UniformRandomZonotopeInclusion}\footnote{The reader may note that the $\varepsilon$-slack is only needed for the inclusion of the cube $[-\varepsilon,\varepsilon]^m$, not to lower bound the measure of sections.}. Here we used the fact that
 for compact convex sets $A,B,C \subseteq \setR^n$ with $C$ non-empty one has $A \subseteq B \Leftrightarrow A+C \subseteq B+C$.

\end{proof}
For our overall proof strategy we need to prove that $K \cap -K$ is still sufficiently large. In fact, this turns out to be true using exactly the
properties that we obtained in Theorem~\ref{thm:VolumetricLowerBoundSections}. 
The main result of this section will be the following:
\begin{theorem} \label{thm:LowerboundOnVolumeForKSymmetrizer}
Let $p, \eps \in (0, 1/2]$ and $m$ be a positive integer so that $m \ge \frac{\log_2 (1/p)}{\eps^2}$. Let $K \subseteq [-1,1]^m$ be a convex body so that $[-\eps,\eps]^m \subseteq K$ and, for every $S \subseteq [m]$, $\Vol_S (K_S) \ge p2^{|S|}$. Then $\Vol_m (K \cap -K) \ge 2^{-5m}$.
\end{theorem}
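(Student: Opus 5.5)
The plan is to symmetrize around the barycenter with Theorem~\ref{thm:MilmanPajorInequality} and then transport the resulting large symmetric set back to the origin using Theorem~\ref{thm:IntersectionConcavity}. For $a \in \setR^m$ write $S_a := K \cap (a - K)$ and $F(a) := \Vol_m(S_a)^{1/m}$. By Theorem~\ref{thm:IntersectionConcavity}, with $L = -K$, the function $F$ is concave on its support; alternatively one checks directly that $\lambda S_a + (1-\lambda)S_b \subseteq S_{\lambda a + (1-\lambda) b}$. We want $F(0) \geq 2^{-5}$.

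Let $z$ be the barycenter of $K$. Then $S_{2z} = z + ((K-z)\cap(z-K))$, so Theorem~\ref{thm:MilmanPajorInequality} and the hypothesis with $S=[m]$ give
\[ F(2z) \geq \tfrac12 \Vol_m(K)^{1/m} \geq p^{1/m} \geq 2^{-\eps^2}. \]
Here $p^{1/m} \geq 2^{-\eps^2}$ is exactly where $m \geq \log_2(1/p)/\eps^2$ enters. So $F(2z)$ is a constant, essentially $1$. The remaining task is to find a point $b = -\mu z$ on the opposite side of the origin with $F(b) \geq 2^{-O(1)}$ and $\mu$ not tiny. Writing $0 = \frac{\mu}{2+\mu}(2z) + \frac{2}{2+\mu} b$, concavity then gives $F(0) \geq \min\{F(2z),F(b)\}$ up to harmless factors, hence $F(0) \geq 2^{-5}$.

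For the point on the other side, the cube gives the cheap bound: $S_b \supseteq [-\eps,\eps]^m \cap (b + [-\eps,\eps]^m)$, a box of volume $\prod_i (2\eps - |b_i|)_+$. This only yields $F(b) \gtrsim \eps$, which is too weak. The real input must be the section hypothesis, which says $K$ is large on every coordinate subspace. I would therefore first prove a structural bound on the barycenter: the coordinates where $|z_i|$ is large relative to $\eps$ must be few, or their contribution is controlled. To see this, take $T := \{i : z_i > c\eps\}$ (and symmetrically for negative coordinates) and compare $\Vol_T(K_T) \geq p 2^{|T|}$ with the supporting halfspace of $K$ in direction $-\bm{1}_T$. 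By Lemma~\ref{lem:cubeconc}, a $p$-fraction of $[-1,1]^T$ cannot lie entirely in a halfspace $\{\langle \bm{1}_T, x\rangle \geq \sqrt{\lambda |T|}\}$ with $2^{-2\lambda} < p$. Combined with Grünbaum-type barycenter control along the direction $\bm{1}_T$, this should force $K$ to contain points $w$ with $w_T$ pointing against $z_T$ at scale comparable to $z_T$.

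With such a $w$ in hand, I would take $b$ of the form $w + (\text{cube part})$ and show $S_b$ contains a box or parallelepiped of side $\Omega(1)$ in most coordinates. For this I would use the coordinate sections $K_S$ in the product-like form $K \supseteq \mathrm{conv}(K_S \cup K_{\bar S})$, and exploit that each $K_S$ has volume $\geq p 2^{|S|}$, which is large per coordinate in the $2^{-\eps^2}$ sense. The main obstacle is precisely this step: converting the volumetric section bounds into a point $b$ opposite to $z$ with $F(b)=\Omega(1)$ rather than $\Omega(\eps)$. If the direct barycenter bound fails, my fallback is induction on $m$. I would split off a coordinate, apply Theorem~\ref{thm:IntersectionConcavity} to the fibers of $K$ over $K_{\{i\}}$, and use the hypothesis on all $K_S$ to keep the per-coordinate loss bounded by a constant factor $2^{-5}$.
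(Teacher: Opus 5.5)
Your setup is sound as far as it goes. $F$ is concave on its support, Theorem~\ref{thm:MilmanPajorInequality} at the barycenter gives $F(2z)\ge p^{1/m}\ge 2^{-\eps^2}$, and you correctly note that the cube $[-\eps,\eps]^m$ alone only gives $F(b)$ of order $\eps$. But the proof stops at the step that carries all the difficulty: nothing in the proposal produces a point $b=-\mu z$ with $F(b)\ge 2^{-O(1)}$, and the tools you list do not supply one. Lemma~\ref{lem:cubeconc} applied to $K_T$ in the single direction $\bm{1}_T$, plus Gr\"unbaum-type control, only shows that $K$ has points (or a constant fraction of its volume) beyond one hyperplane. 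By contrast, $F(b)\ge 2^{-O(1)}$ says exactly that $K$ contains a convex set of volume $2^{-O(m)}$ that is centrally symmetric about $b/2$, which is a much stronger statement. Certifying $S_b$ by an axis-parallel box with sides $\Omega(1)$ in most coordinates cannot work in general. Take $K=\{x\in[-1,1]^m : |\langle\bm{1},x\rangle|\le\sqrt m\}$ with $\eps=1/\sqrt m$ and $p=1/2$; it satisfies all hypotheses by Lemma~\ref{lem:cubeconc}. Yet every box $\prod_i[c_i-s_i,c_i+s_i]\subseteq K$ has $\sum_i s_i\le\sqrt m$, hence volume at most $(2/\sqrt m)^m$. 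The induction fallback is not viable as stated either. The relation $m\ge\log_2(1/p)/\eps^2$ is not preserved when $m$ decreases, and fibers of $K$ over $\{x_i=t\}$ with $t\neq 0$ are not coordinate sections, so the section hypothesis is not inherited.

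The missing idea is to stop committing to the one ray through the barycenter of $K$. Instead, apply the barycenter argument to \emph{every} halfspace cap and use separation over all directions. In your notation the set $\{w : F(2w)^m\ge p2^{-3m}\}$ is convex. The paper shows it meets $rB_\infty^m$, where $r=\lambda/\sqrt m\le\eps$ and $\lambda=\sqrt{\log_2(1/p)}$. Otherwise a separating direction $a$ gives the cap $K_a=K\cap\{x:\langle a,x\rangle\le r\|a\|_1\}$. Applying Theorem~\ref{thm:MilmanPajorInequality} to $K_a$ (not to $K$) puts the barycenter of $K_a$ in that set but on the wrong side of the hyperplane, provided $\Vol_m(K_a)\ge p2^{-2m}$. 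This cap bound, Lemma~\ref{lem:VolumeEllOneCap}, is where the section hypothesis does its work. Sort the $|a_i|$ and split the coordinates into dyadic blocks $I_k$. From each section $K_{I_k}$ keep a piece of volume $\frac p2 2^{|I_k|}$ on which $\langle a_{I_k},x\rangle\le\lambda\|a_{I_k}\|_2$, using Lemma~\ref{lem:cubeconc}. Then take the product of these pieces scaled by dyadic weights summing to $1$, which lies in $K$ by convexity. The $\ell_2$ thresholds telescope to the $\ell_1$ threshold $r\|a\|_1$, which is exactly the support function of $rB_\infty^m$. The resulting $z^*$ lies in $[-\eps,\eps]^m\subseteq K\cap -K$, and $\frac12\bigl(K\cap(2z^*-K)-z^*\bigr)\subseteq K\cap -K$ finishes the proof at the cost of one more factor $2^{-m}$.
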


The following will be helpful to certify that the symmetrizer is large:
\begin{proposition} \label{prop:reflectionVolume}
  Let $K \subseteq \setR^m$ be a convex body. Let $f_K(z) := \Vol_m(K \cap (2z-K))$ denote the volume of the intersection of $K$ with its reflection around $z$, supported on $z \in K$.
  \begin{enumerate}
  \item[(A)] There exists $z^* \in K$ so that $f_K(z^*) \ge 2^{-m} \Vol_m (K)$. In fact, the barycenter of $K$ is a valid choice for $z^*$. 
  \item[(B)] For any $z \in K \cap -K$ one has $\Vol_m (K \cap -K) \ge 2^{-m} f_K(z)$.
  \item[(C)] For any $t > 0$, the set $\{z \in K: f_K(z) \ge t\}$ is convex.
  \end{enumerate}
\end{proposition}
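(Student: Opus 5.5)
Part (A) follows from the Milman--Pajor inequality (Theorem~\ref{thm:MilmanPajorInequality}). Let $z^*$ be the barycenter of $K$. Then
\[
K \cap (2z^*-K) = z^* + \big((K-z^*)\cap(-(K-z^*))\big),
\]
since $x \in 2z^*-K$ if and only if $-(x-z^*) \in K-z^*$. Translation preserves volume, so $f_K(z^*) = \Vol_m((K-z^*)\cap(-(K-z^*))) \ge 2^{-m}\Vol_m(K)$. Moreover $z^* \in K$ because $K$ is convex, so $z^*$ lies in the support of $f_K$.

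For (B) we use convexity and central symmetry of $K\cap -K$. Let $z \in K\cap -K$ and set $L := K\cap(2z-K)$. Then
\[
-L = (-K)\cap(K-2z) = (-K) \cap (-2z + K).
\]
Take $u \in L$ and $w \in -L$ and consider their midpoint $\frac{u+w}{2}$.
\begin{itemize*}
\item Since $u\in K$ and $w \in K-2z$, we have $w+2z\in K$. Hence $\frac{u+w}{2} + z = \frac{u+(w+2z)}{2}\in K$, i.e. $\frac{u+w}{2}\in K - z$.
\item This is not yet what we want, so we choose the averaging differently. Use $u \in K$ and $w\in -K$? That gives nothing directly. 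Instead average $u \in K$ with $-z \in K$, which holds because $z\in -K$; this yields $\frac{u-z}{2}\in K$.
\end{itemize*}
The cleaner route is the following. For $u\in L$ we have $u \in K$ and $2z-u \in K$. Since $z\in -K$, also $-z\in K$. Convexity then gives
\[
\tfrac{u-z}{2}\cdot\text{?}
\]
More directly: we want $\frac{u-z}{2}\in K\cap -K$. First, $\frac{u-z}{2} = \frac{u+(-z)}{2}\in K$ as the midpoint of $u$ and $-z$, both in $K$. Second, $-\frac{u-z}{2} = \frac{(2z-u) + (-z)}{2}\in K$ as the midpoint of $2z-u$ and $-z$, both in $K$. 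So the affine map $u\mapsto \frac{u-z}{2}$ sends $L$ into $K\cap -K$. This map scales volume by $2^{-m}$, which gives (B).

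For (C) we apply Brunn's concavity principle (Theorem~\ref{thm:IntersectionConcavity}) with the convex body $-K$ in place of $L$:
\[
F(y) := \Vol_m(K\cap(y-K))^{1/m}
\]
is concave on its support. Since $f_K(z) = F(2z)^m$, the function $z\mapsto f_K(z)^{1/m}$ is concave on its support, and that support is convex. Superlevel sets of a concave function are convex, so for $t>0$ the set $\{z : f_K(z)^{1/m}\ge t^{1/m}\}$ is convex.

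The one point needing care is that this set should be intersected with $K$. It is automatically contained in $K$: if $f_K(z) > 0$, then $K\cap(2z-K)$ is nonempty, and any point $u$ in it has $u\in K$ and $2z-u\in K$, so their midpoint $z$ lies in $K$. I expect no real obstacle in any of the three parts; the only subtle step was choosing the correct map in (B).
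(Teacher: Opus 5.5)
Your proof is correct and follows the paper's argument. In (A) you use Milman--Pajor after translating $K \cap (2z^*-K)$ to $(K-z^*)\cap(-(K-z^*))$; the paper also adds a self-contained averaging proof that $\E_{z\sim K}[f_K(z)] = 2^{-m}\Vol_m(K)$, which gives existence of some $z^*$ but not the barycenter claim. In (B) you use exactly the paper's map $u \mapsto \frac{u-z}{2}$, and in (C) Brunn's concavity principle with $L=-K$ composed with $z\mapsto 2z$.

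In a final write-up, delete the abandoned attempts in (B). The containment check in (C) is unnecessary, since intersecting a convex superlevel set with the convex body $K$ keeps it convex.
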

\begin{proof}
{\bf (A).}  From the Milman-Pajor Theorem (Theorem~\ref{thm:MilmanPajorInequality}) we immediately know that $f_K(\E_{z \sim K}[z]) \geq 2^{-m}\Vol_m(K)$. But there is a direct proof as well, which we include:
  Sample $x \sim K$ uniformly at random. Note that $\mathbb{P}_{x \sim K} [x \in 2z-K] = \frac{f_K(z)}{\Vol_m(K)}$ for any $z \in K$. In particular, sampling $z \sim K$ uniformly at random independently,
\[ \frac{\E_{z \sim K} [f_K(z)]}{\Vol_m (K)} = \E_{z \sim K} \mathbb{P}_{x \sim K} [x \in 2z-K] = \E_{z \sim K} \E_{x \sim K} [\mathbf{1}_{\{x \in 2z-K\}}] = \E_{x \sim K} \mathbb{P}_{z\sim K} [x \in 2z-K].\]
Since $\mathbb{P}_{z\sim K} [x \in 2z-K] = \mathbb{P}_{z\sim K} [z \in (x+K)/2] = \frac{\Vol_m((x+K)/2)}{\Vol_m(K)} = 2^{-m}$, it follows that $\E_{z \sim K} [f_K(z)] = 2^{-m} \Vol_m (K)$ and some $z^* \in K$ satisfies $f_K(z^*) \ge 2^{-m} \Vol_m (K)$.

{\bf (B).} Let $Q = \frac{1}{2} (K \cap (2z-K) - z)$ and note that $\Vol_m (Q) = 2^{-m} f_K(z)$. It remains to check that $Q \subseteq K \cap -K$. Indeed, any $y \in Q$ may be written as $(x-z)/2$ for some $x \in K \cap (2z-K)$; we have $(x-z)/2 \in K$ by convexity since $x, -z \in K$, and also $(x-z)/2 = (x-2z+z)/2 \in -K$ by convexity since $z, x-2z \in -K$.

{\bf (C).} By Theorem~\ref{thm:IntersectionConcavity}, the function $f_K(z)^{1/m}$ is concave on its support. Hence the superlevel sets of $f_K(z)^{1/m}$ are convex. Then the same holds true for the superlevel sets of $f_K(z)$.
\end{proof}

In order to reach a contradiction from a separating hyperplane, we need the following key technical lemma:

\begin{lemma} \label{lem:VolumeEllOneCap} Let $p \in (0,1/2]$ and $\lambda = \sqrt{\log_2 (1/p)} \ge 1$. Let $K \subseteq [-1,1]^m$ be a convex body so that $\mathbf{0} \in K$ and for every $S \subseteq [m]$, $\Vol_S (K_S) \ge p 2^{|S|}$. Then for every $a \in \setR^m$, \[K_a := K \cap \Big\{x \in [-1,1]^m : \langle a, x \rangle \le \lambda/\sqrt{m} \cdot \|a\|_1 \Big\}\] has volume $\Vol_m (K_a) \ge p 2^{-2m}$.
\end{lemma}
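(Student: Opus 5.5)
The approach is to find a large $(m-1)$-type piece of $K$ inside the halfspace by working on a well-chosen coordinate section. Then we thicken it to a full-dimensional piece using convexity and $\mathbf{0}\in K$.

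\emph{Splitting the coordinates.} Write $\|a\|_1 =: L$ and fix a constant $C\ge 2$. Split $[m]=S\cup\bar S$ with $S:=\{i : |a_i|\le C L/m\}$. Then $|\bar S|\le m/C$, and
\[
\|a_S\|_2^2\le \max_{i\in S}|a_i|\cdot\|a_S\|_1\le C L^2/m .
\]
So on the small coordinates, the $\ell_2$ norm is controlled by $\sqrt{C}\,L/\sqrt m$. This is exactly the scale appearing in the definition of $K_a$.

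\emph{The section $K_S$.} Sample $x_S\sim[-1,1]^S$ uniformly. By hypothesis, $\Pr[x_S\in K_S]\ge p$. By Lemma~\ref{lem:cubeconc}, applied with parameter $\lambda^2$,
\[
\Pr[\langle a_S,x_S\rangle>\lambda\|a_S\|_2]\le 2^{-2\lambda^2}=p^2\le p/2 .
\]
Hence $Q:=K_S\cap\{\langle a_S,x\rangle\le\lambda\|a_S\|_2\}$ has $\Vol_S(Q)\ge \tfrac p2 2^{|S|}$. Some care with constants is needed here. One option is to shrink $Q$ by a constant factor toward $\mathbf{0}$, which stays inside $K_S$ since $\mathbf{0}\in K$. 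This absorbs the $\sqrt C$, and the factor $\gamma^{|S|}$ it costs fits inside the $2^{-2m}$ budget provided $\gamma\ge 1/2$, roughly.

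\emph{Thickening into the remaining directions.} I would take $P:=\{(x_S,y_{\bar S}) : x_S\in \theta Q,\ y_{\bar S}\in (1-\theta)\,\delta\,R\}$, where $R\subseteq K_{\bar S}$. Then $P\subseteq \theta(Q\times 0)+(1-\theta)(0\times\delta R)\subseteq K$ by convexity and $\mathbf{0}\in K$. The volume of $P$ factors as a product. The $\bar S$-part contributes $\langle a_{\bar S},y\rangle\le \delta L$, so we take $\delta\approx\lambda/\sqrt m$ to keep the total below $\lambda L/\sqrt m$.

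\emph{Main obstacle.} The obstacle is that using $R=K_{\bar S}$ costs a second factor $p$ together with $\delta^{|\bar S|}=(\lambda/\sqrt m)^{m/C}$, which is far too small. My first attempt would be to avoid paying $p$ twice. Instead of a product, I would use the cone over $Q$: for $y\in K_{\bar S}$ with $\langle a_{\bar S},y\rangle$ arbitrary, the segment from $Q$ toward $y$ stays in $K$. Choosing $R$ as just a small cube-like neighbourhood obtained from the one-dimensional sections $K_{\{i\}}$ (each of length $\ge 2p$, hence containing a point $\pm p e_i$ direction) might suffice.

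Failing that, I would adjust the split threshold so that $|\bar S|$ is tiny, or recurse on $\bar S$ with halfspace threshold $0$. In either case the remaining task is to balance the factor lost on $\bar S$ against the slack between $p^2$ and $p$ and the $2^{-2m}$ allowance. This balancing is where I expect the real difficulty to lie.
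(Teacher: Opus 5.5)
\paragraph{Verdict: genuine gap.} Your proposal never controls the large coordinates $\bar S$, and that is the whole difficulty of the lemma. You also misdiagnose the obstacle. The extra factor $p$ you try to avoid paying twice is harmless: you can afford a factor $\tfrac p2=2^{-(\lambda^2+1)}$ per piece as long as there are only $O(m/\lambda^2)$ pieces.

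\paragraph{Why your suggested repairs fail.} The fatal factor is $\delta^{|\bar S|}$.
\begin{itemize}
\item Any one-shot treatment of $\bar S$ that shrinks $K_{\bar S}$ (or a cube-like piece) by $\delta\approx\lambda/\sqrt m$ loses $2^{-\Omega(m\log(m/\lambda^2))}$ when $|\bar S|$ is a constant fraction of $m$.
\item Making $\bar S$ smaller only shifts the loss to $S$, where the shrinking factor must be about $C^{-1/2}$. Balancing the two still leaves a loss of order $2^{-\Theta(m\log\log(m/\lambda^2))}$, which is far below $2^{-2m}$ when $m\gg\lambda^2$.
\item The one-dimensional sections do not help. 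They only give segments $[0,\pm p e_i]$ whose signs are dictated by $K$ (possibly with $a_iy_i>0$), and their convex hull has volume about $p^{|\bar S|}/|\bar S|!$.
\item Recursing on $\bar S$ with threshold $0$ fails outright. The half-cube $K=\{x\in[-1,1]^m:\langle a_{\bar S},x_{\bar S}\rangle\ge 0\}$ satisfies all hypotheses with $p=\tfrac12$, yet $K_{\bar S}\cap\{\langle a_{\bar S},y\rangle\le 0\}$ is a null set.
\end{itemize}

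\paragraph{The missing idea.} Treat the large coordinates by the lemma itself, one scale up; this is a multiscale recursion.

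Sort $|a_1|\ge\cdots\ge|a_m|$, and let $T:=[\lfloor m/2\rfloor]$ and $S:=[m]\setminus T$. Use the sharper estimate $\|a_S\|_2^2\le \|a_S\|_1\|a_T\|_1/|T|$, which holds because each $|a_i|$ with $i\in S$ is at most the average of $|a_j|$ over $T$. This estimate is what lets the budgets telescope with weights $\tfrac12$.

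The set $Q:=K_S\cap\{\langle a_S,x\rangle\le\lambda\|a_S\|_2\}$ from your second step has $\Vol_S(Q)\ge\tfrac p2 2^{|S|}$. Apply the lemma inductively to the section $K_T$, which inherits all hypotheses, with its own positive threshold $\lambda\|a_T\|_1/\sqrt{|T|}$. This gives a set $R$ with $\Vol_T(R)\ge p2^{-2|T|}$. Then $P:=\tfrac12 Q\times\tfrac12 R\subseteq K$ by convexity.

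Write $u=\|a_T\|_1$ and $v=\|a_S\|_1$.
\begin{itemize}
\item Halfspace: the bound $\sqrt{uv}\le \tfrac u4+v$ together with $2\sqrt{|T|/m}>\tfrac54$ gives $\tfrac\lambda2\|a_S\|_2+\tfrac\lambda2\, u/\sqrt{|T|}\le \lambda(u+v)/\sqrt m$, so $P\subseteq K_a$.
\item Volume: $\Vol_m(P)\ge \tfrac{p^2}{2}2^{-3|T|}\ge p2^{-2m}$, because $\tfrac p2=2^{-(\lambda^2+1)}\ge 2^{-m/2}$ once $m>4\lambda^2$.
\item Base case: for $m\le 4\lambda^2$ one has $\tfrac12 K\subseteq K_a$.
\end{itemize}

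Unrolled, this is exactly the paper's proof. The sorted coordinates are split into dyadic blocks $I_k$, each block is cut by Lemma~\ref{lem:cubeconc} and scaled by $2^{-k}$, and one factor $\tfrac p2$ is paid per block. This is affordable only because stopping at block size $4\lambda^2$ leaves at most $m/(2\lambda^2)$ blocks.
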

\begin{proof}
 If $m \le 4\lambda^2$ then $\tfrac{1}{2} K \subseteq K_a$ as $\langle a,x \rangle \le \tfrac{1}{2} \|a\|_1 \le \lambda/\sqrt{m} \cdot \|a\|_1$ for all $x \in \tfrac{1}{2} [-1,1]^m$, so that $\Vol_m (K_a) \ge 2^{-m} \Vol_m(K) \ge p > p 2^{-2m}$.
 Assume $m > 4 \lambda^2$ and reorder coordinates so that $|a_1| \ge |a_2| \ge \cdots \ge |a_m|$.
Define a sequence of indices by $m_1=m$ and $m_{k+1}=\lfloor m_k/2\rfloor$ for $k \ge 1$, and let $\ell$ be the first index such that $m_\ell\le 4\lambda^2$; in particular $m_k > 4\lambda^2 \ge 4$ for $k < \ell$. Partition $[m]=I_\ell \sqcup I_{\ell-1}\sqcup\cdots\sqcup I_1$ where for $1\le k<\ell$, $I_k:=[m_k]\setminus [m_{k+1}]$, and $I_\ell := [m_\ell]$. For each $1\le k<\ell$, let
\[
    Q_k:=\Big\{x_{I_k} \in K_{I_k}:\langle a_{I_k},x_{I_k}\rangle \le \lambda\,\|a_{I_k}\|_2\Big\}.
\]
By Lemma~\ref{lem:cubeconc}, we have
$
    \Vol_{I_k}(Q_k) \ge \Vol_{I_k}(K_{I_k}) - p^2 2^{|I_k|} \ge p2^{|I_k|}-p^2 2^{|I_k|} \ge \tfrac{p}{2} 2^{|I_k|}.
$

For $I_\ell$ we set $Q_\ell :=  \Big\{ x_{I_\ell} \in K_{I_\ell}: \langle a_{I_\ell},x_{I_\ell} \rangle \le \lambda/\sqrt{m_\ell}\cdot \|a_{I_\ell}\|_1 \Big\}$ which has volume at least $p$ as $|I_\ell| \le 4\lambda^2$, so that $\tfrac{1}{2} K_{I_\ell} \subseteq Q_\ell$ as above.

Now define the Cartesian product
\[
        P := \Big\{x \in \setR^m : x_{I_\ell} \in 2^{-(\ell - 1)} Q_\ell \ \text{ and } \ x_{I_k}\in 2^{-k} Q_k\ \text{ for } k\in [1,\ell)\Big\}.
\]
It remains to show the following:
\begin{claim*}
$P \subseteq K_a$ and $\Vol_m (P) \ge p 2^{-2m}.$
\end{claim*}

The chosen coefficients satisfy $2^{-(\ell-1)}+\sum_{k\in [1,\ell)}2^{-k}=1$ so $P\subseteq K$ by convexity.
For each $1\le k<\ell$, every coordinate $a_i$ with  $i \in I_k = [m_k] \setminus [m_{k+1}]$ has absolute value at most $\frac{\|a_{[m_{k+1}]}\|_1}{m_{k+1}}$, so that $\|a_{I_k}\|_2^2 \le \|a_{I_k}\|_\infty\|a_{I_k}\|_1 \le \frac{\|a_{[m_{k+1}]}\|_1\|a_{I_k}\|_1}{m_{k+1}}$. We may bound
\[
\begin{aligned}
       2^{-k} \lambda \|a_{I_k}\|_2  &\le \frac{2^{-k}\lambda}{\sqrt{m_{k+1}}}  \sqrt{\|a_{[m_{k+1}]}\|_1\|a_{I_k}\|_1} \\ &\le \frac{2^{-(k-1)}\lambda}{\sqrt{m_k}}\|a_{[m_k]}\|_1 - \frac{2^{-k}\lambda}{\sqrt{m_{k+1}}} \|a_{[m_{k+1}]}\|_1,
\end{aligned}
\]
where we use $2\sqrt{m_{k+1}/m_k} \ge 2 \sqrt{\tfrac{2}{5}} > \frac{5}{4}$ and $\sqrt{uv} \le \frac{5}{4} (u+v) - u$ for $u,v \ge 0$.

Thus $P \subseteq K_a$, as for any $x\in P$ we indeed have, by telescoping,
 \begin{eqnarray*}
     \left<a,x\right> &=& \sum_{k=1}^{\ell-1} 
\left<a_{I_k},x_{I_k}\right> +
\left<a_{I_{\ell}},x_{I_{\ell}}\right> \\
     &\leq& \sum_{k=1}^{\ell-1} 2^{-k} \lambda \|a_{I_k}\|_2 + \frac{2^{-(\ell-1)} \lambda}{\sqrt{m_\ell}}\|a_{I_{\ell}}\|_1
  \\   &\leq& \frac{\lambda}{\sqrt{m_1}} \|a_{[m_1]}\|_1 =\lambda/\sqrt{m} \cdot \|a\|_1.
   \end{eqnarray*}
   It remains to lower bound $\Vol_m (P)$. The set $P$ arises from the sets $Q_k$ by scaling, so
\[
\begin{aligned}
        \Vol_m(P)
        &=
        \Vol_{I_\ell}(2^{-(\ell-1)} Q_\ell)
        \prod_{k=1}^{\ell-1}\Vol_{I_k}( 2^{-k} Q_k) \\
         & \ge 2^{-(\ell-1) m_\ell} \cdot p \cdot 
        \prod_{k=1}^{\ell-1} 2^{-k |I_k|}
        \cdot \tfrac{p}{2} 2^{|I_k|} \\
        &= 2^{-\sum_{k=1}^{\ell-1} m_k}\cdot 
        p \cdot 2^{m-m_\ell} \cdot (p/2)^{\ell-1} \\
        &=
        p \cdot 2^{m-\sum_{k=1}^{\ell} m_k} \cdot (p/2)^{\ell-1} \\
        & \ge p 2^{-m} \cdot 2^{-m} = p 2^{-2m},
        \end{aligned}
\]
where we use $\sum_{k=1}^\ell m_k < 2m$ as $m_{k+1} \le m_k /2$ for $k \ge 1$, and $(p/2)^{\ell-1} \ge 2^{-m}$ as $\ell-1 \le \lceil \log_2 (m/(4\lambda^2)) \rceil \le m/(2\lambda^2)$ and so $(2/p)^{\ell-1} = 2^{(\ell-1)(\lambda^2+1)} \le 2^{m \cdot \frac{\lambda^2+1}{2\lambda^2}} \le 2^m$.
\end{proof}

Now we have all the ingredients to show the main theorem of this section.

\begin{proof}[Proof of Theorem~\ref{thm:LowerboundOnVolumeForKSymmetrizer}]
  Let $K$ be a convex body with $\varepsilon B_{\infty}^m \subseteq K \subseteq B_{\infty}^m$ so that all coordinate sections of $K$ have relative volume of at least $p$.
  Our goal is to prove that $\Vol_m(K \cap -K)$ is large. We abbreviate $r := \sqrt{\frac{\log_2(1/p)}{m}}$ and prove the following: \\
  {\bf Claim I.} \emph{There is a $z^* \in K$ with $f_K(z^*) \geq p2^{-3m}$ and $\|z^*\|_{\infty} \leq r$.} \\
  {\bf Proof of Claim I.} Let $S := \{ z \in K \mid f_K(z) \geq p2^{-3m}\}$ which by Prop~\ref{prop:reflectionVolume}(C) is a convex set. Suppose for the sake of contradiction that there is no such $z^*$. That means the two convex sets $S$ and $r B_{\infty}^m$ are disjoint. Then by the separating hyperplane theorem, there is a hyperplane with normal vector $a \in \setR^m$ that separates the two sets. After choosing an orientation for $a$ we have
  \[
 \forall z \in S:  \;\; \left<a,z\right> > \max_{x \in rB_{\infty}^m} \left<a,x\right> = r \|a\|_{1}.
  \]
 Now, let $z^*$ be the barycenter of $K_a = K \cap \{ x \in [-1,1]^m : \left<a,x\right> \leq r\|a\|_1\}$. Then
  \[
   f_K(z^*) \geq f_{K_a}(z^*) \stackrel{\textrm{Prop~\ref{prop:reflectionVolume}}(A)}{\geq} 2^{-m}\Vol_m(K_a) \stackrel{(*)}{\geq} 2^{-m} \cdot p 2^{-2m} = p2^{-3m}.
 \]
 Here we apply Lemma~\ref{lem:VolumeEllOneCap} in $(*)$.
 Thus $z^* \in S$ while $\langle a, z^*\rangle \le r\|a\|_1$ which is a contradiction. \qed

 Now we conclude the main proof. By the assumption $m \ge \frac{\log_2(1/p)}{\eps^2}$ we have $\|z^*\|_\infty \le r \le \eps$. Since $[-\eps,\eps]^m \subseteq K$, we obtain $z^* \in K \cap -K$ and Prop~\ref{prop:reflectionVolume}(B) yields
 \[
   \Vol_m (K \cap -K) \ge 2^{-m} f_K (z^*) \ge p 2^{-4m} > 2^{-5m},
 \]
 as $p \ge 2^{-m\eps^2} \ge 2^{-m/4}$.
\end{proof}

\section{Proof of the main result} \label{sec:ProofOfLinSizeSparsifier}
We finally prove Theorem~\ref{thm:L1SparsifierV2} which also implies
the statements of Theorem~\ref{thm:L1Sparsifier} and Theorem~\ref{thm:L1SparsifierV3}. First we summarize what we learned from Section~\ref{sec:InclusionOfRandZon} and Section~\ref{sec:SizeOfSymmetrizer}.

\begin{corollary} \label{cor:VolumeSymmetricL1}
  There is a universal constant $c_0>0$ so that the following holds:
  Let $Z \subseteq \setR^n$ be a zonotope generated by the rows of $A \in \setR^{m \times n}$. For $\eps > 0$ and $m \ge \frac{c_0n}{\eps^2}$ the set
  \[
   Q = \{ x \in [-1,1]^m \mid (1-\varepsilon) Z \subseteq Z_{\bm{1}+x} \subseteq (1+\varepsilon) Z \}
  \]
  is convex and symmetric, and has volume $\Vol_m (Q) \ge 2^{-5m}$.
\end{corollary}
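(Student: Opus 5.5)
The plan is to identify $Q$ with the symmetrizer $K \cap -K$ of the set $K$ from Corollary~\ref{thm:VolumetricLowerBoundSections}. Then the volume bound follows from Theorem~\ref{thm:LowerboundOnVolumeForKSymmetrizer}.

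\textbf{Step 1: $Q = K\cap -K$.} Recall $K = \{x \in [-1,1]^m : Z_{\bm{1}+x}\subseteq (1+\varepsilon)Z\}$. Via support functions, $h_{Z_{\bm{1}+x}}(y) = \sum_i (1+x_i)|\langle a_i,y\rangle|$ and $h_Z(y)=\sum_i|\langle a_i,y\rangle|$. Since inclusion of convex bodies is equivalent to pointwise inequality of support functions:
\begin{itemize}
\item $Z_{\bm{1}+x}\subseteq(1+\varepsilon)Z$ iff $\sum_i x_i|\langle a_i,y\rangle|\le \varepsilon\sum_i|\langle a_i,y\rangle|$ for all $y$, which is \eqref{eq:KinInequalityForm};
\item $(1-\varepsilon)Z\subseteq Z_{\bm{1}+x}$ iff $\sum_i (-x_i)|\langle a_i,y\rangle|\le\varepsilon\sum_i|\langle a_i,y\rangle|$ for all $y$, i.e.\ $-x\in K$.
\end{itemize}
Since $[-1,1]^m$ is symmetric, this gives $Q=K\cap -K$. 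Both $K$ and $-K$ are intersections of halfspaces with the cube, so $Q$ is convex and symmetric.

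\textbf{Step 2: verify the hypotheses of Theorem~\ref{thm:LowerboundOnVolumeForKSymmetrizer}.} By Corollary~\ref{thm:VolumetricLowerBoundSections}, $K\supseteq[-\varepsilon,\varepsilon]^m$, $K$ is convex, and $\Vol_S(K_S)\ge c^n 2^{|S|}$ for every $S$. Set $p:=\min\{c^n,1/2\}$, so $\log_2(1/p)\le n\log_2(1/c)+1\le c_1 n$.

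The theorem also requires $\varepsilon\le 1/2$. If $\varepsilon>1/2$, I would instead apply it with $\varepsilon'=1/2$. The set $K$ only grows with $\varepsilon$ (and $Q$ likewise), so $Q_\varepsilon\supseteq Q_{1/2}$ and the volume bound transfers. For $\varepsilon\ge 1$ the requirement on $m$ becomes $m\ge 4c_1 n$, which is guaranteed by $m\ge c_0 n/\varepsilon^2$ only if $\varepsilon$ is bounded. I expect this edge case to be the main nuisance. I would handle it in one of two ways:
\begin{itemize}
\item restrict to $\varepsilon\le 1$, as in the main theorems, taking $c_0\ge 4c_1$;
\item or observe that for $\varepsilon\ge 1$ the lower inclusion is trivial and $Q\supseteq$ the upper-inclusion set intersected with its reflection, which contains $[-1,1]^m$ when $\varepsilon\ge 2$ (and the monotonicity argument covers the rest).
\end{itemize}

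With $\varepsilon\le 1/2$, choose $c_0\ge c_1$. Then $m\ge c_0n/\varepsilon^2\ge \log_2(1/p)/\varepsilon^2$, so Theorem~\ref{thm:LowerboundOnVolumeForKSymmetrizer} yields $\Vol_m(K\cap -K)\ge 2^{-5m}$, and hence $\Vol_m(Q)\ge 2^{-5m}$.

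The substantive work has already been done in the earlier results. The only points needing care are the support-function identification in Step~1 and the bookkeeping of the constant $c_0$ and of the range of $\varepsilon$ in Step~2.
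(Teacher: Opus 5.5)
Your proposal is correct and is essentially the paper's proof: you rewrite both inclusions via support functions to get $Q=K\cap -K$, then feed the section bounds of Corollary~\ref{thm:VolumetricLowerBoundSections} into Theorem~\ref{thm:LowerboundOnVolumeForKSymmetrizer}, and your monotonicity reduction for $\eps\in(1/2,1]$ is extra care the paper skips. Use your first option (restrict to $\eps\le 1$), because the second is wrong: for $\eps>1$, central symmetry gives $(1-\eps)Z=(\eps-1)Z$, so the lower inclusion is not trivial and your Step~1 equivalence for it fails; for $\eps>3$ and $Z\neq\{\bm{0}\}$ one even has $Q=\emptyset$, since $Z_{\bm{1}+x}\subseteq 2Z$.
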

\begin{proof}
As in \eqref{eq:KinInequalityForm}, we can write $Q$ in inequality form as  
\[
  Q = \Big\{x \in [-1,1]^m : \Big|\sum_{i=1}^m x_i \cdot |\langle a_i, y \rangle|\Big| \le \eps \sum_{i =1}^m |\langle a_i, y \rangle| \ \forall y \in \setR^n\Big\},\]
from which symmetry and convexity are immediate.
Note that $Q$ is precisely $K \cap -K$ for the convex set $K$ defined in Theorem~\ref{thm:VolumetricLowerBoundSections}, which by that theorem satisfies $[-\eps,\eps]^m \subseteq K$ and $\Vol_S (K_S) \ge c^n 2^{|S|}$ for every $S \subseteq [m]$. For $c_0 \geq \log_2(1/c)$ we have  $m \ge \frac{c_0n}{\varepsilon^2} = \frac{\log_2(c^{-n})}{\eps^2}$ and thus by Theorem~\ref{thm:LowerboundOnVolumeForKSymmetrizer} we have $\Vol_m(Q) = \Vol_m(K \cap -K) \geq 2^{-5m}$.
\end{proof}

Now we prove Theorem~\ref{thm:L1SparsifierV2} which we restate in a slightly expanded form:
\begin{theorem} \label{thm:L1SparsifierV2rephrased}
Let $Z \subseteq \setR^{n}$ be a zonotope generated by the rows of $A \in \setR^{m \times n}$ and let $\varepsilon >0$. Then there exists a weight vector $w \in \setR_{\geq 0}^m$ with $|\textrm{supp}(w)| \leq O(\frac{n}{\varepsilon^2})$ so that
$(1-\varepsilon) Z \subseteq Z_w \subseteq (1+\varepsilon) Z$.
Moreover, the weight vector $w$ can be computed in randomized time $2^m$ times a polynomial in the encoding length of $A$.
\end{theorem}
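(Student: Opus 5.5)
The plan is an iterative halving argument driven by Corollary~\ref{cor:VolumeSymmetricL1} and Theorem~\ref{thm:FractionalColoring}, run with a geometric schedule of accuracy parameters. Fix the universal constant $c_0$ from Corollary~\ref{cor:VolumeSymmetricL1}. Since $\Vol_m(Q)\geq 2^{-5m}=(1/32)^m$, Theorem~\ref{thm:FractionalColoring} with $c=1/32$ gives a universal constant $s:=s(1/32)$. We may assume $\varepsilon\le 1$ and $m > C n/\varepsilon^2$ for a large constant $C$; otherwise we simply take $w=\bm{1}$.

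\textbf{One round.} Suppose the current zonotope is $Z^{(t)}$, generated by $m_t$ weighted rows (nonzero weights only), and let $\delta_t>0$ satisfy $m_t\ge c_0 n/\delta_t^2$. Corollary~\ref{cor:VolumeSymmetricL1}, applied to $Z^{(t)}$ with the rescaled rows and parameter $\delta_t$, yields a symmetric convex body $Q_t\subseteq[-1,1]^{m_t}$ with $\Vol(Q_t)\ge 2^{-5m_t}$. Theorem~\ref{thm:FractionalColoring} then gives $x\in sQ_t\cap[-1,1]^{m_t}$ with at least $m_t/2$ coordinates equal to $\pm1$. Because $Q_t$ is symmetric, we may replace $x$ by $-x$ if needed so that at least $m_t/4$ coordinates equal $-1$. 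Convexity of $Q_t$ and $\bm{0}\in Q_t$ give $sQ_t\subseteq\{x:(1-s\delta_t)Z^{(t)}\subseteq Z^{(t)}_{\bm{1}+x}\subseteq(1+s\delta_t)Z^{(t)}\}$; here I would use $s\le1$, or else just the inequality form with $s\delta_t$ in place of $\varepsilon$. Setting $Z^{(t+1)}:=Z^{(t)}_{\bm{1}+x}$ gives a zonotope with at most $\tfrac34 m_t$ segments and multiplicative distortion $1\pm s\delta_t$. The new weights $1+x_i$ lie in $[0,2]$, so the weights stay nonnegative.

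\textbf{Schedule and accounting.} I would choose $\delta_t:=\sqrt{c_0 n/m_t}$, the smallest value the corollary allows. Since $m_{t+1}\le\tfrac34 m_t$, the sequence $\delta_t$ grows geometrically, with $\delta_{t+1}\ge\sqrt{4/3}\,\delta_t$. Stop at the first $T$ with $m_T\le C n/\varepsilon^2$. Before stopping we have $m_{T-1}>Cn/\varepsilon^2$, so $\delta_{T-1}<\varepsilon\sqrt{c_0/C}$. The total distortion is controlled by $\prod_t(1\pm s\delta_t)$. Because the $\delta_t$ increase geometrically, $\sum_{t<T}s\delta_t\le O(s\,\delta_{T-1})\le O(\varepsilon\sqrt{c_0/C})$, which is at most $\varepsilon/2$ once $C$ is large. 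Then $\prod(1+s\delta_t)\le e^{\varepsilon/2}\le1+\varepsilon$ and $\prod(1-s\delta_t)\ge 1-\varepsilon$, since each $s\delta_t\le 1/2$. Chaining the inclusions gives $(1-\varepsilon)Z\subseteq Z^{(T)}\subseteq(1+\varepsilon)Z$, using $\lambda Z^{(t)}\subseteq Z^{(t+1)}$ and monotonicity under scaling. One subtlety remains: the last round may overshoot below the target, but that only helps the support bound. The resulting support has size $m_T=O(n/\varepsilon^2)$.

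\textbf{Running time and main obstacle.} For the algorithmic claim, Theorem~\ref{thm:FractionalColoring} needs a separation oracle for $Q_t$. Using the inequality form, membership reduces to checking $|\sum_i x_i|\langle a_i,y\rangle||\le\delta\sum_i|\langle a_i,y\rangle|$ over all $y$. This is piecewise linear in $y$ and linear on each cell of the hyperplane arrangement $\{\langle a_i,y\rangle=0\}$, so it suffices to enumerate the sign patterns in $\{-1,1\}^m$. For each pattern we solve an LP to maximize the violation over that cone intersected with a normalization. This takes $2^m\cdot\mathrm{poly}$ time and also produces a separating hyperplane. The total over all rounds is dominated by the first round. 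The step I expect to need the most care is the distortion accounting: checking that the geometric growth of $\delta_t$ makes the sum telescope to $O(\varepsilon)$ uniformly, and that the constants (the exponent $5$ and $s$) are universal and independent of $m_t$ and $n$.
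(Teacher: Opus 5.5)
Your proposal is correct and matches the paper's proof essentially step for step: the same schedule $m_t = c_0 n/\delta_t^2$, Corollary~\ref{cor:VolumeSymmetricL1} combined with Theorem~\ref{thm:FractionalColoring} for $c=2^{-5}$, the sign flip giving at least $m_t/4$ coordinates equal to $-1$, and the geometric-series bound on $\prod_t(1\pm s\delta_t)$. The only difference is the oracle in the running-time claim. You build separating hyperplanes directly by enumerating the $2^m$ sign patterns of the arrangement $\{\langle a_i,y\rangle=0\}$ and solving LPs on each cone, while the paper checks the at most $2^m$ vertices of one zonotope for membership in the other and then converts membership to separation via Gr\"otschel--Lov\'asz--Schrijver. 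Both give $2^m\cdot\mathrm{poly}$ time.
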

\begin{proof}
  The proof strategy is to iteratively reduce the number of used segments via Theorem~\ref{thm:FractionalColoring} until it reaches a target of $M := C \cdot \frac{n}{\eps^2}$ where $C>0$ is a large enough constant.
  Starting with $w^{(0)} := \bm{1}$, we construct a sequence of vectors $w^{(t)} \in \setR^{m}_{\geq 0}$ for $t \in \{0, \dots, T\}$ supported on $S_t := \textrm{supp}(w^{(t)})$ with $m_t := |S_t|$ so that $m_t \le \tfrac{3}{4} m_{t-1}$ for each $t \in [T]$ and $T$ is the first index with  $m_T \le M$.
  Set $\eps_t$ so that $m_t = \frac{c_0n}{\eps_t^2}$. We note that $\varepsilon_{t-1} \leq \sqrt{3/4} \varepsilon_t$ for all $t \in \{ 1,\ldots,T\}$. Moreover $m_{T-1} > M$ and so $\varepsilon_{T-1} \leq \sqrt{c_0/C} \cdot \varepsilon$.
  For vectors $a,b \in \setR^m$ we denote by $a \odot b \in \setR^m$ the coordinate-wise product, i.e. $(a \odot b)_i := a_i \cdot b_i$. Consider the set
  \[ Q_{\eps_t} := \Big\{x \in [-1,1]^{S_t} : (1-\varepsilon_t) Z_{w^{(t)}} \subseteq Z_{(\bm{1}+x) \odot w^{(t)}} \subseteq (1+\varepsilon_t) Z_{w^{(t)}} \Big\},
  \]
  which by Corollary~\ref{cor:VolumeSymmetricL1} is a symmetric convex body
  of volume at least $2^{-5m_t}$. Then applying Theorem~\ref{thm:FractionalColoring} to $Q_{\varepsilon_t}$ with $c := 2^{-5}$ yields a vector $x^{(t)} \in sQ_{\eps_t} \cap [-1,1]^{S_t}$ with $|\{ i : |x^{(t)}_i| = 1\}| \ge \tfrac{1}{2} m_t$ where $s>0$ is some constant. After possibly flipping the signs
 of $x^{(t)}$  we may assume that $|\{i \in S_t \mid x^{(t)}_i = -1\}| \geq \frac{m_t}{4}$. We update $w^{(t+1)}_i := (1+x^{(t)}_i) \cdot w^{(t)}_i$ (filling $x^{(t)}$ with zeros outside of $S_t$) so that the support $S_{t+1} = \textrm{supp}(w^{(t+1)})$ has size $|S_{t+1}| \leq \frac{3}{4} m_t$.
  Moreover, we have
  \[
    (1-s \varepsilon_t) Z_{w^{(t)}} \subseteq Z_{w^{(t+1)}} \subseteq (1+s\varepsilon_t) Z_{w^{(t)}}.
  \]
  Then iterating over $t=0,\ldots,T-1$ we have
  \[
   \prod_{t=0}^{T-1} (1-s \varepsilon_t) Z \subseteq Z_{w^{(T)}} \subseteq \prod_{t=0}^{T-1} (1+s \varepsilon_t) Z.
 \]
 Next, we use that the $\varepsilon_t$ are growing geometrically and so
 \[
   \prod_{t=0}^{T-1} (1+s \varepsilon_t) \leq \exp\Big( \sum_{t=0}^{T-1} s \varepsilon_t\Big)
   \leq \exp\Big(s\varepsilon_{T-1} \sum_{j \geq 0} (3/4)^{j/2}\Big) \leq \exp\Big(8\sqrt{\frac{c_0}{C}} s \varepsilon\Big) \leq 1+\varepsilon
 \]
 for $C$ large enough (depending on $s$ and $c_0$). Similarly one can bound $\prod_{t=0}^{T-1} (1-s \varepsilon_t) \geq 1-\varepsilon$.
We conclude that
\[
  (1-\varepsilon) Z \subseteq Z_{w^{(T)}} \subseteq (1+\varepsilon) Z,
\]
as claimed.

Finally, we discuss the running time aspect. All ingredients used in our argument can be implemented in polynomial time --- except the separation oracle for $Q_{\varepsilon_t}$ as required by Theorem~\ref{thm:FractionalColoring}. As $Q_{\varepsilon_t}$ is a symmetric convex body, separation is polynomial-time equivalent to testing membership, see \cite{GroetschelLovaszSchrijver1988}. That means we need to test the inclusion $Z_1 \subseteq Z_2$ polynomially many times, where $Z_1,Z_2 \subseteq \setR^n$ are zonotopes generated by at most $m$ segments each. While such a test is $\mathbf{coNP}$-complete in general~\cite{KulmburgAlthoff2021}, $Z_1$ has at most $2^m$ vertices that can be enumerated explicitly. Then for each vertex $x \in \textrm{vert}(Z_1)$ one can test whether $x \in Z_2$. Here we use that for a single zonotope, the separation problem is solvable in polynomial time by reducing it to the optimization problem.
\end{proof}

We may also summarize the algorithm behind Theorem~\ref{thm:L1SparsifierV2rephrased} as follows:
\begin{center}
\psframebox{
\begin{minipage}{13.2cm}
  {\sc $L_1$ Sparsification Algorithm} \vspace{1mm} \hrule \vspace{1mm}
{\bf Input:} Matrix $A \in \setR^{m \times n}$ generating zonotope $Z = \{ \sum_{i=1}^m y_ia_i : y \in [-1,1]^m \}$.
\begin{enumerate*}
\item[(1)] Set $w_i := 1$ for $i \in [m]$, $S := [m]$ and $M := \frac{Cn}{\eps^2}$
\item[(2)] WHILE $|S| > M$ DO
  \begin{enumerate*}
  \item[(3)] Let $Q := \{ x \in [-1,1]^{S} : (1-\tilde{\varepsilon}) Z_{w} \subseteq Z_{(\bm{1}+x) \odot w} \subseteq (1+\tilde{\varepsilon}) Z_{w}\}$
 where $\tilde{\eps}$ satisfies $|S| = \frac{c_0n}{\tilde{\eps}^2}$.
  \item[(4)] Compute $x \in sQ \cap [-1,1]^S$ such that at least $\frac{|S|}{4}$ entries $i \in S$ have $x_i=-1$.    
  \item[(5)] Update $w_i := w_i \cdot (1+x_i)$ for all $i \in S$ and $S := \textrm{supp}(w)$.
  \end{enumerate*}
\end{enumerate*}
\end{minipage}
}
\end{center}

\section{Linear-size spectral sparsifiers}

The goal of this section is to give an alternative proof of the spectral sparsification result of Batson, Spielman and Srivastava~\cite{BatsonSpielmanSrivastavaSTOC2009,BatsonSpielmanSrivastava2012} as stated in Theorem~\ref{thm:L2Sparsifier}.

For a symmetric matrix $B \in \setR^{n \times n}$ we define
\[
  \detplus(B) := \begin{cases} \det(B) & \textrm{if }B \succeq 0 \\ 0 & \textrm{otherwise.} \end{cases}
\]
Next we define the analogue of the function $f(t) = \Vol_n (K \ominus Z_t)$ that we had earlier.
\begin{lemma} \label{lem:PhiSepConvex}
Let $A_1, \dots, A_m \succeq 0$ and $B$ symmetric. Define a function $\Phi : \setR^m \to \setR_{\ge 0}$ by 
\[
        \Phi(t):= \detplus\big(B - \sum_{i=1}^m t_iA_i \big).
\]
Then $\Phi$ is separately convex.
\end{lemma}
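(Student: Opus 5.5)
Separate convexity only concerns one coordinate at a time, so I fix an index $j$ and all other coordinates $t_i$ ($i\neq j$). Setting $C := B - \sum_{i \neq j} t_iA_i$ (symmetric) and $P := A_j \succeq 0$, it suffices to show that the one-variable function $g(s) := \detplus(C - sP)$ is convex on $\setR$. The model is Lemma~\ref{lem:VolumeOfSubtractionIsConvexInS}: there volume was an integral of functions $(\ell(z)-2s\|v\|_2)_+$, and here the determinant will be a product of such positive parts.

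The key structural observation is that $\{ s : C - sP \succeq 0\}$ is a closed interval $I$, which may be empty, bounded or unbounded. Convexity of $I$ follows from linearity in $s$ and convexity of the PSD cone. Also, since $P \succeq 0$, the map $s \mapsto C - sP$ is monotonically non-increasing in the Loewner order, so $I$ is of the form $(-\infty, s_0]$, $\setR$, or empty. Outside $I$ we have $g = 0$. If $I$ is empty, $g\equiv 0$ and we are done, so assume $I \neq \emptyset$.

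On $I$ I reduce to a product of affine functions.

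\textbf{Case $C \succ 0$ at some point.} After shifting $s$ we may assume $C \succ 0$ at $s=0$. Write $C - sP = C^{1/2}(I - sC^{-1/2}PC^{-1/2})C^{1/2}$, and let $\mu_1,\dots,\mu_n \ge 0$ be the eigenvalues of the PSD matrix $C^{-1/2}PC^{-1/2}$. Then
\[
g(s) = \det(C)\prod_{k=1}^n (1 - s\mu_k)_+ .
\]
This formula holds on all of $\setR$ because $C - sP \succeq 0$ exactly when every factor $1 - s\mu_k$ is nonnegative. Each factor is nonnegative, non-increasing and convex in $s$. A product of nonnegative, non-increasing, convex functions is convex: for two such functions, $(uv)'' = u''v + 2u'v' + uv'' \ge 0$ since $u'v' \ge 0$. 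To avoid the kinks I would do this via one-sided derivatives, or approximate $(\cdot)_+$ by smooth convex non-increasing nonnegative functions and pass to the limit. Hence $g$ is convex.

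\textbf{Degenerate case.} If $C - sP$ is singular for every $s \in I$, then $\det(C - sP)$ vanishes on $I$. If $I$ has nonempty interior, this polynomial in $s$ vanishes identically, so $g \equiv 0$. If $I$ is a single point, then $g$ is zero except possibly at that point, where it is again $0$. Either way $g \equiv 0$, which is convex.

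The main obstacle is the case analysis ensuring that the positive-part formula is valid globally, including the boundary of $I$ and the degenerate situation where $C - sP$ is never positive definite. The product-convexity step itself is routine.
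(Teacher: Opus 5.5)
Your proof is correct and follows the paper's argument. You fix one coordinate, shift so that the base matrix is positive definite (the only alternative being $g\equiv 0$), and factor $\det(C-sP)=\det(C)\prod_k(1-s\mu_k)$ using the eigenvalues of $C^{-1/2}PC^{-1/2}$. The only difference is in the last step: the paper checks $f''\ge 0$ on the feasible half-line and glues the non-increasing function to $0$ at its root, whereas you write $g$ globally as $\det(C)\prod_k(1-s\mu_k)_+$ and use that a product of nonnegative, non-increasing convex functions is convex. That fact needs no smoothing, since it follows directly from $\theta(1-\theta)\,(u(x)-u(y))(v(x)-v(y))\ge 0$ for similarly monotone $u,v$.
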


\begin{proof}
  We prove the univariate case first: \\
  {\bf Claim I.} \emph{Let $A,B \in \setR^{n \times n}$ symmetric matrices with $A \succeq 0$. Then the function  $f : \setR \to \setR_{\ge 0}$ with
$
        f(s):= \det_+(B-sA)
$
is convex.} \\
{\bf Proof of Claim I.} If $f \equiv 0$, then $f$ is convex and there
is nothing to show.
Otherwise, after shifting (which affects $B$ but not $A$) we may assume that $f(0)>0$
and hence $B \succ 0$.
Let $\lambda_1 \ge \dots \ge \lambda_n\geq0$ be the eigenvalues of $B^{-1/2}AB^{-1/2}$. Then
\begin{eqnarray*}
  f(s) &=& \bm{1}_{B - sA \succeq 0} \cdot \det(B-sA) \\
  &=& \bm{1}_{s B^{-1/2}AB^{-1/2} \preceq I_n} \cdot \det(B) \cdot \det(I_n - s B^{-1/2}AB^{-1/2}) \\ &=& \mathbf{1}_{\{s \lambda_1 \leq 1\}} \cdot \det(B) \cdot \prod_{j=1}^n(1-s\lambda_j).
\end{eqnarray*}
For $s\lambda_1 \le 1$ we have $f''(s) =\det(B) \cdot \sum_{i\ne j}\lambda_i\lambda_j \prod_{\ell\ne i, j}(1-s\lambda_\ell)\ge 0$. Moreover, $f(s)$ is nonincreasing and vanishes when $s\lambda_1 =1$. Therefore $f$ is convex, as needed. \qed

Now back to the main proof. Let $t \in \setR^m$ and  $k \in [m]$.
Then apply Claim I with $B' :=B-\sum_{j\neq k}t_jA_j$ and $A' := A_k$ and the claim follows. 
\end{proof}
The reader may note that Claim I is the fact that a univariate real-rooted polynomial $p$ with $r \in \setR$ as smallest root is convex on $(-\infty,r]$, provided that the sign is chosen so that $p(s)>0$ for $s<r$.

Now we prove an analogue of Theorem~\ref{thm:RandomZonotopeInclusion}.
For matrices  $A_1,\dots,A_m\in\mathbb R^{n\times n}$ and $t \in \setR^m$ we abbreviate $A(t) := \sum_{i=1}^m t_iA_i$.
\begin{lemma} \label{lem:PSD-multiplier}
Let $A_1,\dots,A_m\in\mathbb R^{n\times n}$ be positive semidefinite matrices,
and let $B\succ0$. 
Let $X_1,\dots,X_m\geq0$ be independent random variables, and suppose $A(\E[X])\preceq B$. Then
\[
        \mathbb{P}[A(X)\preceq B] \ge \frac{\det(B-A(\E[X]))}{\det(B)}.
\]
\end{lemma}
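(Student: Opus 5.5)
We mimic the proof of Theorem~\ref{thm:RandomZonotopeInclusion}, with the volume of a Minkowski difference replaced by $\detplus$. Define
\[
\Phi(t) := \detplus\big(B - A(t)\big), \qquad t \in \setR_{\geq 0}^m .
\]
By Lemma~\ref{lem:PhiSepConvex}, $\Phi$ is separately convex on $\setR^m$, hence also on $\setR_{\geq 0}^m$.

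The key pointwise inequality is
\[
\Phi(t) \leq \det(B)\cdot \bm{1}_{\{A(t)\preceq B\}} \qquad \text{for every } t \in \setR_{\geq 0}^m .
\]
We verify it as follows.
\begin{itemize}
\item If $A(t)\not\preceq B$, then $B - A(t)$ is not positive semidefinite, so $\Phi(t) = 0$ by definition.
\item If $A(t)\preceq B$, then $0 \preceq B - A(t) \preceq B$, since $A(t)\succeq 0$ for nonnegative $t$. Monotonicity of the determinant on the PSD cone gives $\det(B-A(t)) \leq \det(B)$. Concretely, the matrix $B^{-1/2}(B-A(t))B^{-1/2} = I - B^{-1/2}A(t)B^{-1/2}$ has all eigenvalues in $[0,1]$, so its determinant lies in $[0,1]$.
\end{itemize}

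We now take expectations. The coordinates of $X$ are independent and nonnegative, so Lemma~\ref{lem:JensenForSepConvexFcts} applies to $\Phi$ and yields
\[
\det(B)\cdot \Pr[A(X)\preceq B] \;\geq\; \E[\Phi(X)] \;\geq\; \Phi(\E[X]).
\]
By hypothesis $A(\E[X])\preceq B$, so $\Phi(\E[X]) = \det\big(B - A(\E[X])\big)$. Dividing by $\det(B)>0$ gives the claimed bound.

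The only point needing care is applying Jensen to a function defined on a domain $\Omega^m$. We take $\Omega = \setR_{\geq 0}$, which is convex and contains the support of each $X_i$. Separate convexity restricted to $\Omega^m$ follows from Lemma~\ref{lem:PhiSepConvex}, which proves it on all of $\setR^m$. Integrability is not an issue if the $X_i$ have finite means; otherwise the statement is vacuous or needs interpretation. I do not anticipate any real obstacle: the work is carried by Lemma~\ref{lem:PhiSepConvex} and the determinant monotonicity bound.
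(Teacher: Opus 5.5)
Your proof is correct and is essentially the paper's proof: the pointwise bound $\Phi(X)\le\det(B)\cdot\mathbf{1}_{\{A(X)\preceq B\}}$ (from $0\preceq B-A(X)\preceq B$), then Jensen for separately convex functions (Lemmas~\ref{lem:JensenForSepConvexFcts} and~\ref{lem:PhiSepConvex}) to get $\E[\Phi(X)]\ge\Phi(\E[X])=\det(B-A(\E[X]))$. Your explicit eigenvalue check of $B^{-1/2}(B-A(t))B^{-1/2}$ and your remark on the domain $\Omega=\mathbb{R}_{\ge 0}$ are harmless additions to the same argument.
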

\begin{proof}
If $A(X)\preceq B$ then $0\preceq B-A(X)\preceq B$ and $\det(B-A(X))\leq\det(B)$, so that we may bound $\Phi(X) \le \det(B)\cdot \mathbf 1_{\{A(X)\preceq B\}}$. Therefore, by Lemmas~\ref{lem:JensenForSepConvexFcts} and~\ref{lem:PhiSepConvex},
\[
        \det(B)\cdot \mathbb P[A(X)\preceq B] \ge \E[\Phi(X)] \ge \Phi(\E[X]) = \det(B-A(\E[X])). \qedhere
\]
\end{proof}
In particular if $A_1,\ldots,A_m \in \setR^{n \times n}$ are PSD matrices with sum $B := \sum_{i=1}^m A_i$, then
\begin{eqnarray} 
  \Pr_{x \sim [-1,1]^m}\Big[ \sum_{i=1}^m x_iA_i \preceq \varepsilon B\Big] &=& \Pr_{x \sim [-1,1]^m}\Big[ \sum_{i=1}^m (1+x_i)A_i \preceq (1+\varepsilon) B\Big] \label{eq:SimpleSpectralBound} \\
                                                                            &\stackrel{\textrm{Lem~\ref{lem:PSD-multiplier}}}{\geq}& \frac{\det( (1+\varepsilon)B - B)}{\det( (1+\varepsilon) B)} \nonumber
  \\ &=& \Big(\frac{\varepsilon}{1+\varepsilon}\Big)^n \nonumber
\end{eqnarray}
As in the case of zonotopes, this bound suffices to prove Theorem~\ref{thm:L2Sparsifier} with a slightly weaker bound of $O(\frac{n}{\varepsilon^2} \log(\frac{1}{\varepsilon}))$. Next, we work towards replacing 
\eqref{eq:SimpleSpectralBound} by a bound of the form $c^n$.

\subsection{A tight spectral positivity bound}

Now to the spectral positivity bound. Again, it will be convenient to first prove the bound for Rademacher random variables and then transfer the bound to other product distributions.
\begin{theorem} \label{thm:SpectralPositivityForRademachers}
  Let $A_1,\ldots,A_m \subseteq \setR^{n \times n}$ be matrices with $A_i \succeq 0$ for all $i \in [m]$. Then
  \[
 \Pr_{x \sim \{ -1,1\}^m}\Big[\sum_{i=1}^m x_iA_i \succeq 0\Big] \geq e^{-2n}
  \]
\end{theorem}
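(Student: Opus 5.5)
The plan is to apply Lemma~\ref{lem:hyperhomog} to a determinant-based function that plays the role of the Minkowski-subtraction volume. With $B := \sum_{i=1}^m A_i$, define $f : [-1,1]^m \to \setR_{\geq 0}$ by
\[
  f(x) := \detplus\Big(\sum_{i=1}^m x_i A_i\Big) = \detplus\Big(B - \sum_{i=1}^m (1-x_i) A_i\Big).
\]
By Lemma~\ref{lem:PhiSepConvex}, applied with $t_i = 1-x_i$, $f$ is separately convex, since an affine change of each variable preserves separate convexity. On the set where $\sum_i x_i A_i \succeq 0$ we have $f(\rho x) = \det(\rho\sum_i x_iA_i) = \rho^n f(x)$. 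Where $\sum_i x_iA_i \not\succeq 0$, also $\rho \sum_i x_iA_i \not\succeq 0$ for $\rho>0$, so both sides vanish. For $\rho=0$ the identity needs $\det(0)=0$, which holds when $n \geq 1$. Hence $f(\rho x) = \rho^n f(x)$ for all $x$ and all $\rho \in [0,1]$. Finally, $f(x) > 0$ implies $\sum_i x_iA_i \succeq 0$. So once $f$ is known to be not identically zero, Lemma~\ref{lem:hyperhomog} gives $\Pr[\sum_i x_iA_i \succeq 0] \geq \Pr[f(x)>0] \geq e^{-2n}$.

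The main obstacle is that $f$ may vanish identically, for instance when $B$ is singular. Then $\det(\sum_i x_iA_i) = 0$ for every $x$, even though positive semidefiniteness may still hold with positive probability. I would handle this by restricting to the range. Let $V := \mathrm{range}(B) = \sum_i \mathrm{range}(A_i)$, with $d := \dim V \leq n$. Every $A_i$ has range inside $V$ and kernel containing $V^\perp$. So $\sum_i x_iA_i \succeq 0$ holds if and only if its compression to $V$ is PSD. I would then replace $\det$ by the determinant of that compression, $\det_V$, which is again a determinant of a matrix pencil in $d$ dimensions. The separate convexity argument of Lemma~\ref{lem:PhiSepConvex} applies verbatim, and the homogeneity degree becomes $d$. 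The compressed $B$ is positive definite on $V$, so $f(\bm{1}) = \det_V(B) > 0$ and $f$ is not identically zero. If $d=0$, all $A_i=0$ and the probability is $1$.

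Lemma~\ref{lem:hyperhomog} then yields probability at least $e^{-2d} \geq e^{-2n}$. The remaining routine checks are the following. First, the degree-$d$ homogeneity must hold on $\{-1,1\}^m$ as Lemma~\ref{lem:hyperhomog} requires; this holds because the identity was shown for all $x \in [-1,1]^m$. Second, the PSD test is unaffected by restricting to $V$.
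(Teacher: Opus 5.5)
Your proposal is correct and matches the paper's proof essentially step for step. The paper also compresses to $V=\mathrm{im}(\sum_i A_i)$ via an orthonormal basis, takes $\detplus(\sum_i x_i \widetilde A_i)$, gets separate convexity from Lemma~\ref{lem:PhiSepConvex}, degree-$d$ homogeneity and positivity at $\bm{1}$, and concludes from Lemma~\ref{lem:hyperhomog} with $e^{-2d}\ge e^{-2n}$.
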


\begin{proof}
Let $V:=\mathrm{im}\Big(\sum_{i=1}^m A_i\Big)$ and $d:=\dim V$. Assume without loss of generality that $d>0$. Let $U \in \mathbb{R}^{n \times d}$ be a matrix whose columns are an orthonormal basis of $V$, so that $U^\top U=I_d$ and $UU^\top = P$ is the orthogonal projection onto $V$. Define $\widetilde A_j:=U^\top A_j U\in\setR^{d\times d}$ for each $j \in [m]$ and note that
\[
\sum_{i=1}^m x_iA_i\succeq0
\quad\Longleftrightarrow\quad
\sum_{i=1}^m x_i\widetilde A_i\succeq0.
\]
Moreover $\sum_{i=1}^m\widetilde A_i = U^\top\Big(\sum_{i=1}^m A_i\Big)U \succ0$. Thus, replacing $A_j$ by $\widetilde A_j$, it suffices to prove the result in dimension $d$, under the assumption that $\sum_{i=1}^m A_i\succ0$.

For \(x\in[-1,1]^m\), define $\Phi(x):=\detplus\Big(\sum_{i=1}^m x_iA_i\Big)$. By Lemma~\ref{lem:PhiSepConvex}, this function is separately convex, and for $x\in\{-1,1\}^m$ and $\rho\geq0$ we have  $\Phi(\rho x)=\rho^d\Phi(x)$. Moreover, $\Phi(\bm{1})=\det\Big(\sum_{i=1}^m A_i\Big)>0$.
Therefore Lemma~\ref{lem:hyperhomog} yields
\[
\Pr_{x\sim\{-1,1\}^m}[\Phi(x)>0]
\geq e^{-2d}.
\]
Finally, \(\Phi(x)>0\) implies that \(\sum_{i=1}^m x_iA_i\succeq0\), and so
\[
\Pr_{x\sim\{-1,1\}^m}
\Big[\sum_{i=1}^m x_iA_i\succeq0\Big]
\geq e^{-2d}\geq e^{-2n}. \qedhere
\]
\end{proof}

A random variable $X$ on $\setR$ is called \emph{symmetric} if $X$ and $-X$ have the same distribution.
We prove the following:

\begin{corollary} \label{cor:SpectralPositivityForSymDist}
  Let $A_1,\ldots,A_m \subseteq \setR^{n \times n}$ be matrices with $A_i \succeq 0$ for all $i \in [m]$. Then
  for any random vector $x \in \setR^m$ with independent symmetrically distributed coordinates one has
  \[
 \Pr\Big[\sum_{i=1}^m x_iA_i \succeq 0\Big] \geq e^{-2n}
  \]
  where $c>0$ is a universal constant.
\end{corollary}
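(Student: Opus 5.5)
Since the coordinates $x_i$ are independent and symmetric, we decompose each as $x_i = \varepsilon_i \sigma_i$. Here $\sigma_i := |x_i| \geq 0$, and $\varepsilon_i \sim \{-1,1\}$ is a uniform sign independent of $\sigma_i$ and of all other coordinates. Symmetry is exactly what allows this: conditioned on $|x_i|$, the sign of $x_i$ is uniform. If $x_i = 0$ the sign is irrelevant. So we may assume that $(\sigma_1,\ldots,\sigma_m)$ and $(\varepsilon_1,\ldots,\varepsilon_m)$ are independent, the $\varepsilon_i$ are i.i.d.\ Rademacher, and $x = \varepsilon \odot \sigma$ has the same distribution as the original vector. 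This mirrors the proof of Corollary~\ref{cor:hyperhomogCont}.

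Next we condition on a realization of $\sigma \in \setR_{\geq 0}^m$. Define $B_i := \sigma_i A_i$, which is positive semidefinite because $\sigma_i \geq 0$ and $A_i \succeq 0$. Then $\sum_{i=1}^m x_i A_i = \sum_{i=1}^m \varepsilon_i B_i$. Applying Theorem~\ref{thm:SpectralPositivityForRademachers} to the matrices $B_1,\ldots,B_m$ gives
\[
\Pr\Big[\sum_{i=1}^m x_iA_i \succeq 0 \;\Big|\; \sigma\Big] = \Pr_{\varepsilon \sim \{-1,1\}^m}\Big[\sum_{i=1}^m \varepsilon_i B_i \succeq 0\Big] \geq e^{-2n}.
\]
This holds for every $\sigma$. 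The degenerate case where all $B_i$ vanish is covered either by the reduction to $d>0$ in that theorem, or trivially, since then the sum is $0 \succeq 0$ with probability $1$.

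Taking expectation over $\sigma$ yields the unconditional bound $e^{-2n}$. The remark about ``a universal constant $c$'' in the statement is vacuous; $c = e^{-2}$ works.

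The only step needing care is the first one: rigorously justifying the representation $x_i \stackrel{d}{=} \varepsilon_i |x_i|$ with $\varepsilon_i$ independent of $|x_i|$ for a general symmetric distribution, which may have atoms at $0$. We handle it by drawing $\varepsilon_i$ fresh and checking that $\varepsilon_i|x_i|$ has the same law as $x_i$. For any Borel set $E$,
\[
\Pr[\varepsilon_i|x_i| \in E] = \tfrac12\Pr[|x_i|\in E] + \tfrac12\Pr[-|x_i|\in E],
\]
and by symmetry this equals $\Pr[x_i \in E]$. Independence across coordinates is preserved because each coordinate is built from its own independent randomness. Measurability of the event $\{\sum_i x_i A_i \succeq 0\}$ is clear, since it is a closed condition.
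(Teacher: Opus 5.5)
Your proposal is correct and follows the paper's proof: you decompose $x_i=\varepsilon_i\sigma_i$ with independent uniform signs, condition on $\sigma$, and apply Theorem~\ref{thm:SpectralPositivityForRademachers} to the positive semidefinite matrices $\sigma_iA_i$. Your check that $\varepsilon_i|x_i|$ has the same law as $x_i$ (atoms at $0$ included), and your handling of the degenerate case where all $\sigma_iA_i$ vanish, spell out steps the paper leaves implicit.
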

\begin{proof}
By symmetry, we can draw any coordinate $x_i$ by first drawing the absolute value $\sigma_i \geq 0$ and then independently drawing a sign $\varepsilon_i \sim \{ -1,1\}$ so that $x_i = \varepsilon_i\sigma_i$. For any conditioning $\sigma$ we have
\[
 \Pr\Big[\sum_{i=1}^m \varepsilon_i \cdot (\sigma_iA_i) \succeq 0 \mid \sigma \Big] \geq e^{-2n}
\]
by Theorem~\ref{thm:SpectralPositivityForRademachers} which gives the claim.
\end{proof}

\subsection{From spectral inclusion to spectral sparsifiers}

\begin{corollary} \label{cor:VolumeSymmetricPSD}
There is a universal constant $c_0>0$ so that the following holds:
For any positive semidefinite matrices $A_1, \dots, A_m \in \setR^{n \times n}$ with sum $B := \sum_{i=1}^m A_i$ and $\eps > 0$ with $m \ge \frac{c_0n}{\eps^2}$, the set
\[
  Q = \Big\{ x \in [-1,1]^m : -\eps B \preceq \sum_{i=1}^m x_i A_i \preceq \eps B \Big\}
\]
has volume $\Vol_m (Q) \ge 2^{-5m}$.
\end{corollary}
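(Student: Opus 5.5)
We mirror the proof of Corollary~\ref{cor:VolumeSymmetricL1}. Set
\[
  K := \Big\{ x \in [-1,1]^m : \sum_{i=1}^m x_i A_i \preceq \eps B \Big\}.
\]
Then $Q = K \cap -K$, since $x \in -K$ means exactly $-\sum_i x_iA_i \preceq \eps B$. The plan is to verify the hypotheses of Theorem~\ref{thm:LowerboundOnVolumeForKSymmetrizer} for $K$ with $p := e^{-2n}$, after which that theorem gives $\Vol_m(Q) \ge 2^{-5m}$ directly.

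First, convexity. $K$ is the intersection of the cube with the halfspaces $\{x : \sum_i x_i\, y^\top A_i y \le \eps\, y^\top B y\}$ over all $y \in \setR^n$, so it is convex. It is also a body, which follows from the next point.

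Second, $K$ contains a small cube. For $x \in [-\eps,\eps]^m$ we have $\sum_i x_iA_i \preceq \sum_i |x_i| A_i \preceq \eps \sum_i A_i = \eps B$, using $A_i \succeq 0$. Hence $[-\eps,\eps]^m \subseteq K$.

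Third, the lower bound on coordinate sections, which is the substantive step. Fix $S \subseteq [m]$. Then
\[
  K_S = \Big\{ x_S \in [-1,1]^S : \sum_{i \in S} x_iA_i \preceq \eps B \Big\}.
\]
Since $\eps B \succeq 0$, this set contains
\[
  \Big\{ x_S \in [-1,1]^S : \sum_{i\in S} (-x_i) A_i \succeq 0 \Big\}.
\]
The vector $-x_S$ for $x_S \sim [-1,1]^S$ has independent symmetric coordinates. Corollary~\ref{cor:SpectralPositivityForSymDist}, applied to the family $(A_i)_{i\in S}$, gives
\[
  \Vol_S(K_S)/2^{|S|} \ge e^{-2n}.
\]
The case $S=\emptyset$ is trivial.

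Finally, choose constants. Let $p = e^{-2n}$, so $\log_2(1/p) = 2n\log_2 e$, and take $c_0 := 2\log_2 e$; then $m \ge c_0 n/\eps^2 = \log_2(1/p)/\eps^2$. Theorem~\ref{thm:LowerboundOnVolumeForKSymmetrizer} requires $p,\eps \le 1/2$. We have $p \le e^{-2} < 1/2$ for $n \ge 1$. If $\eps > 1/2$, we instead apply the argument with $\eps' = 1/2$, at the cost of enlarging $c_0$ by a factor of $4$. This works because the set $Q$ for $\eps'$ is contained in the set $Q$ for $\eps$, and $m \ge c_0n/\eps^2$ with the enlarged $c_0$ gives $m \ge \log_2(1/p)/\eps'^2$ whenever $\eps \le 1$. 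For larger $\eps$ the containment is trivial, since $\eps B \succeq B \succeq \sum_i x_iA_i$ holds for all $x \in [-1,1]^m$, so $Q$ is the whole cube.

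The main obstacle is only to check that the section bound uses the Rademacher-to-symmetric transfer correctly. The slack $\eps B$ is not needed there, exactly as noted in the footnote to Corollary~\ref{thm:VolumetricLowerBoundSections}.
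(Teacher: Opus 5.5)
Your proof is correct and is essentially the paper's argument. The paper takes $K=\{x\in[-1,1]^m:\sum_i x_iA_i\succeq-\eps B\}$, which is just the reflection of your $K$, and then does the same three things: it checks $[-\eps,\eps]^m\subseteq K$, lower-bounds the coordinate sections by $e^{-2n}$ via Corollary~\ref{cor:SpectralPositivityForSymDist}, and invokes Theorem~\ref{thm:LowerboundOnVolumeForKSymmetrizer} with $c_0\ge 2\log_2 e$. Your extra reduction to $\eps'=1/2$ (at the cost of a factor $4$ in $c_0$) takes care of the hypothesis $\eps\le 1/2$ of that theorem, a point the paper's proof passes over silently.
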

\begin{proof}
  Let $K := \Big\{x \in [-1,1]^m : \sum_{i=1}^m x_iA_i \succeq -\eps B\Big\}$ so that $Q = K \cap -K$ and note that $[-\eps, \eps]^m \subseteq K$ as $A_i \succeq 0$ and $K$ is convex. 
  Fix a set $S \subseteq [m]$ and let $X \in [-1,1]^m$ be the random vector with independent coordinates so that
  $X_i \sim [-1,1]$ for $i \in S$ and $X_i = 0$ otherwise. Then the coordinates of $X$ are symmetrically
  distributed and hence
  \[\frac{\Vol_S (K_S)}{2^{|S|}} \geq \mathbb{P}\Big[\sum_{i=1}^m X_iA_i \succeq 0 \Big] \ge c^n
  \]
  for $c := e^{-2}$
  by Cor~\ref{cor:SpectralPositivityForSymDist}.
    For $c_0 \geq \log_2(1/c)$ we have $m \geq \frac{c_0n}{\varepsilon^2} \geq \frac{\log_2(c^{-n})}{\varepsilon^2}$ and so by Theorem~\ref{thm:LowerboundOnVolumeForKSymmetrizer} we have $\Vol_m(Q) \ge 2^{-5m}$.
\end{proof}

Now we can derive the following result which is the arbitrary-rank analogue of \cite{BatsonSpielmanSrivastavaSTOC2009,BatsonSpielmanSrivastava2012} due to De Carli, Harvey and Sato~\cite{SparseSumsOfPSDMatricesDeCarliHarveySato2015}:
\begin{theorem} \label{thm:PSDSparsifier}
  For any positive semidefinite matrices $A_1, \dots, A_m \in \setR^{n \times n}$ with $B = \sum_{i=1}^m A_i$ and any $\eps >0$ there exist weights $w_1, \dots, w_m \ge 0$ with $|\{i \in [m] : w_i \ne 0\}| \le O(\frac{n}{\eps^2})$ so that
  \[
    (1-\eps) B \preceq \sum_{i=1}^m w_iA_i \preceq (1+\eps) B
  \]
Moreover, the weights can be found in polynomial time.
\end{theorem}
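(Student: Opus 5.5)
The plan mirrors the proof of Theorem~\ref{thm:L1SparsifierV2rephrased}, with Corollary~\ref{cor:VolumeSymmetricPSD} in place of Corollary~\ref{cor:VolumeSymmetricL1}. We iteratively reduce the support of a weight vector until it has size at most $M := C\frac{n}{\eps^2}$. Start with $w^{(0)} := \bm{1}$ and let $S_t := \textrm{supp}(w^{(t)})$, $m_t := |S_t|$. Define $\eps_t$ by $m_t = \frac{c_0 n}{\eps_t^2}$ and set $B_t := \sum_{i \in S_t} w^{(t)}_i A_i$. As long as $m_t > M$, apply Corollary~\ref{cor:VolumeSymmetricPSD} to the PSD matrices $w^{(t)}_iA_i$, $i\in S_t$, which sum to $B_t$. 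This shows that
\[
Q_t := \Big\{x \in [-1,1]^{S_t} : -\eps_t B_t \preceq \sum_{i\in S_t} x_i w^{(t)}_i A_i \preceq \eps_t B_t\Big\}
\]
is a symmetric convex body of volume at least $2^{-5m_t}$. Symmetry and convexity are immediate because the constraints are linear matrix inequalities in $x$.

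Next, apply Theorem~\ref{thm:FractionalColoring} with $c = 2^{-5}$. This gives $x^{(t)} \in sQ_t \cap [-1,1]^{S_t}$ with at least $m_t/2$ coordinates equal to $\pm 1$. Flipping signs if necessary, at least $m_t/4$ of them equal $-1$. Update $w^{(t+1)}_i := (1+x^{(t)}_i)w^{(t)}_i$. Then $m_{t+1} \le \frac34 m_t$ and $(1-s\eps_t)B_t \preceq B_{t+1} \preceq (1+s\eps_t)B_t$. The weights stay nonnegative since $x^{(t)}_i \ge -1$.

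Since the $m_t$ shrink geometrically, the $\eps_t$ grow geometrically: $\eps_{t-1}\le\sqrt{3/4}\,\eps_t$, and the final one satisfies $\eps_{T-1} \le \sqrt{c_0/C}\,\eps$. Chaining the Loewner inequalities (multiplying by positive scalars preserves $\preceq$) gives
\[
\prod_t(1-s\eps_t)\,B \preceq B_T \preceq \prod_t(1+s\eps_t)\,B.
\]
Both products are within $1\pm\eps$ once $C$ is large compared to $s^2c_0$, exactly as in the $\ell_1$ case. If $\eps$ is large, say $\eps \ge 1$, we can reduce to $\eps = 1/2$, or note that the statement is trivial once $m \le M$.

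For polynomial time, the only ingredient beyond the randomized algorithm of Theorem~\ref{thm:FractionalColoring} is a separation oracle for $Q_t$. Here it is easy, unlike the zonotope case. Membership of $x$ amounts to checking whether $\eps_tB_t \pm \sum_i x_iw_iA_i$ is PSD, which an eigenvalue computation decides. If it fails, a unit eigenvector $v$ with negative eigenvalue gives a separating hyperplane $\sum_i x_i w_i \langle v, A_i v\rangle \gtrless \pm\eps_t\langle v,B_tv\rangle$, and the box constraints are trivial to separate. The algorithm is therefore randomized polynomial time. Deterministic polynomial time would require derandomizing Theorem~\ref{thm:FractionalColoring}, which I would not attempt; I read ``polynomial time'' as randomized here.

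The main obstacle is bookkeeping rather than new ideas. First, Corollary~\ref{cor:VolumeSymmetricPSD} must be applied to the reweighted matrices, which is fine since they are PSD. Second, the volume bound must hold uniformly at every round; it does, because the corollary is scale-free. Third, the error accumulation must telescope, as argued above. I also want to check the degenerate case where $B_t$ is singular. The corollary's proof handles this by projecting to the image, and the Loewner inequalities remain meaningful on that image.
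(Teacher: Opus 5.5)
Your proposal is correct and follows the paper's proof. The paper simply says the argument is the $\ell_1$ iteration (Corollary~\ref{cor:VolumeSymmetricPSD} combined with Theorem~\ref{thm:FractionalColoring}), except that membership in $Q$ reduces to a polynomial-time Loewner-order test; your explicit eigenvector separation oracle does the same job as the paper's membership-to-separation reduction, and your remark that the algorithm is randomized polynomial time accurately reflects what Theorem~\ref{thm:FractionalColoring} provides.
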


\begin{proof}
  The proof is completely analogous to that of Theorem~\ref{thm:L1Sparsifier}, except here the membership test for the set $Q$ in Cor~\ref{cor:VolumeSymmetricPSD} boils down to the
  following: given symmetric matrices $A,B \in \setR^{n \times n}$, test if $A \preceq B$. This can be done in polynomial time.
\end{proof}

\begin{proof}[Proof of Theorem~\ref{thm:L2Sparsifier}] Let $A$ have rows $a_1, \dots, a_m$ and take $A_k = a_k a_k^\top$ in Theorem~\ref{thm:PSDSparsifier}. The matrix $D$ is formed by entries $D_{k,k} = \sqrt{w_k}$.
\end{proof}

\section{Acknowledgments}

The authors used GPT-5.5 Pro and GPT-5.6 Pro during the development of this work to explore proof strategies, search for related literature, and assist with verification. GPT was not used in any part of the exposition. 

\bibliographystyle{alphaurl} 
\bibliography{linearSizeL1Sparsifiers}

\appendix

\section{Concentration\label{sec:AppendixA}}

\begin{proof}[Proof of Lemma~\ref{lem:cubeconc}]
 Draw $z_k \sim \{-1,1\}$ independently for $k \in \mathbb{N}$, so $\mathbb E[e^{t z_k}] =\frac{e^t+e^{-t}}{2} \le e^{t^2/2}$. Let $z=\sum_{k \ge 1} 2^{-k} z_k$. Then $z \sim [-1,1]$ and, by independence,
\[
    \E [e^{tz}] =\prod_{k=1}^\infty \E[e^{t 2^{-k}z_k}] \le \prod_{k=1}^\infty \exp\Big(\frac{t^2 2^{-2k}}{2}\Big) =\exp\Big(\frac{t^2}{2}\sum_{k=1}^\infty 4^{-k}\Big)=e^{t^2/6}.
\]

Since $x \sim [-1,1]^m$, this gives $
    \displaystyle \E[e^{t \langle v, x \rangle}] = \prod_{k=1}^m \E[e^{tv_k x_k}] \le \exp\Big(\frac{t^2\|v\|_2^2}{6}\Big).
$ If $v = \mathbf{0}$ or $\lambda = 0$ there is nothing to prove; otherwise, taking $t = 3 \sqrt{\lambda}/\|v\|_2$,

\[
\begin{aligned}
\mathbb{P}\Big[\langle v,x\rangle > \sqrt{\lambda}\,\|v\|_2\Big]
&= \mathbb{P}\Big[e^{t\langle v,x\rangle}
    > e^{t\sqrt{\lambda}\|v\|_2}\Big] \\
&\le e^{-t\sqrt{\lambda}\|v\|_2}
    \E\Big[e^{t\langle v,x\rangle}\Big] \\
&\le \exp\Big(
    -t\sqrt{\lambda}\|v\|_2
    + \frac{t^2\|v\|_2^2}{6}
\Big) \\
&\le e^{-3\lambda/2}\\
& \le 2^{-2\lambda}. \qedhere
\end{aligned}
\]
\end{proof}

\end{document}